\documentclass[final]{siamltex}

\usepackage{graphicx}
\usepackage{subfigure}
\usepackage{amsmath}
\usepackage{booktabs} 
\usepackage{array}
\usepackage{slashed}
\usepackage{amsmath,bm}
\usepackage{appendix}
\usepackage{mathrsfs,psfrag,eepic,epsfig}
\usepackage{fancyhdr}
\usepackage{graphicx}
\usepackage{makecell}
\usepackage{multirow}
\usepackage{float}
\usepackage{amsmath,amssymb}
\usepackage{algorithm}
\usepackage{algorithmic}
\usepackage{tikz}
\usepackage{epstopdf}
\usepackage{caption}
\usepackage[algo2e,ruled,vlined]{algorithm2e} 
\usepackage{threeparttable}
\usepackage{hyperref}
\hypersetup{hidelinks,
	colorlinks=true,
	allcolors=blue,
	pdfstartview=Fit,
	breaklinks=true}

\newtheorem{remark}{Remark}[section]

\title{A deep backward regression-based scheme for high-dimensional nonlinear partial differential equations}

\author{Qiang Han\thanks{School of Mathematics, Yangzhou University, Yangzhou 225002, PR China. Email: {\tt hanqiang@yzu.edu.cn}.}
       \and Shaolin Ji\thanks{Zhongtai Securities Institute for Financial Studies, Shandong University, Jinan, Shandong 250100, PR China. Email: {\tt jsl@sdu.edu.cn}.}
\and Yunzhang Li\thanks{Research Institute of Intelligent Complex Systems, Fudan University, Shanghai 200433, China. Email: {\tt li\_yunzhang@fudan.edu.cn}.
}}

\begin{document}

\maketitle

\begin{abstract}
We propose a deep backward regression-based (DBR) scheme for solving
high-dimensional nonlinear parabolic partial differential equations. Building on
the DBDP method of Hur\'e, Pham, and Warin~\cite{HCPHWX20}, the proposed method
reformulates the local backward losses through conditional expectations and trains
the resulting regression problems sequentially in time. This conditional-expectation
formulation replaces pathwise Brownian fluctuations in the Euler residual by their
averaged effect and therefore provides an intrinsic variance-reduction mechanism
before loss evaluation. In practice, the conditional expectations are approximated
by local multi-path Monte Carlo averages, which leads to smoother training targets
and improved numerical stability. Numerical experiments show that DBR performs
competitively on standard high-dimensional benchmarks and is more stable than
DBDP1 on the challenging unbounded benchmark considered in Example~2. Under an
idealized population-loss minimization setting, we provide an error analysis and
establish a half-order convergence result under suitable approximation and
integrability assumptions. We also discuss an extension to variational inequalities.
\end{abstract}

\begin{keywords}
 high dimensional nonlinear PDEs,  backward stochastic differential equations,  conditional expectations, deep neural networks, numerical approximations.
\end{keywords}

\begin{AMS}
60H35, 65C20, 65M12.
\end{AMS}

\section{Introduction}
In this paper, we develop a probabilistic deep learning scheme for approximating
solutions of high-dimensional quasilinear parabolic PDEs of the form
\begin{equation}
\begin{aligned}
\begin{cases}
\partial_t u+\mu \cdot D_x u+\frac{1}{2} \operatorname{Tr}\left(\sigma\sigma^{\top} D_x^2 u\right)+f\left(\cdot, \cdot, u, \sigma^{\top} D_x u\right)=0 & \text { on }[0, T) \times \mathbb{R}^d, \\
u(T, \cdot)=g(\cdot)& \text { on } \mathbb{R}^d,
\end{cases}
\end{aligned}
\label{PDE}
\end{equation}
where $\mu$ is a map from $[0,T]\times\mathbb{R}^{d}\rightarrow \mathbb{R}^{d}$ and $\sigma$ is a map from $[0,T]\times\mathbb{R}^{d}\rightarrow \mathbb{R}^{d\times d}$;
$f:[0,T]\times\mathbb{R}^{d}\times\mathbb{R}^{n}\times\mathbb{R}^{n\times d}\rightarrow\mathbb{R}^{n}$
and $g:\mathbb{R}^{d}\rightarrow \mathbb{R}^{n}$  represent the nonlinear generator and the terminal function respectively;
the operators $D_x$ and $D_x^2$ refer to the first and second order spatial derivatives;
the symbol $\cdot$ represents the scalar product and $^\top$ denotes the transpose of vector or matrix.

The pioneer works of  Pardoux and Peng~\cite{PEPS92} and Peng~\cite{PS91} show that the quasilinear parabolic PDEs (\ref{PDE}) are associated with
Markovian forward backward stochastic differential equations (FBSDEs) due to the nonlinear Feynman-Kac formula.
Since then, significant interest has been directed toward the probabilistic interpretation of PDE solutions.
By these interpretations, researchers have developed probabilistic numerical methods to approximate PDE solutions
(see \cite{DFMS06,DFMS08,FATNWX11} and references therein). Concurrently,
the design of algorithms to solve FBSDEs has been intensively studied.
A significant milestone in this classical numerical analysis line was established by Zhang in his seminal work \cite{ZJ04}.   This work provided a robust theoretical foundation.
In fact, our framework is fundamentally rooted in the classical backward stochastic differential equations (BSDEs) numerical analysis, drawing particular inspiration from Zhang \cite{ZJ04}, which proposed the idea of solving BSDEs through backward induction on conditional expectations and proved  the rate of convergence in the strong $L^2$ sense. The DBR method effectively extends these fundamental principles from traditional numerical method into the deep learning paradigm, aiming to handle the curse of dimensionality.
Up to now, the proposed numerical algorithms to solve nonlinear FBSDEs
have shown their remarkable performance with respect to the high-order problems.
For instance, the Euler schemes with convergence rates of $\frac{1}{2}$ or $1$ are presented in \cite{BBTN04,GELC07,ZWCLPS06}.
Multi-step schemes \cite{HQJS24,HQJS23,GETP16,ZWFYZT14}
achieve convergence rates greater than or equal to
$1$, although they require terminal conditions over multiple steps.
Furthermore, one-step schemes with convergence rates of at least $2$ have been proposed in \cite{CJFDC14,ZCWJZW19,HQLSZQ24}.

However, most of these numerical methods are  unsuitable for solving nonlinear PDEs in dimensions greater than 4 (see \cite{BHJGM04}).
To address this issue, some techniques such as the parallel computing method, the sparse grid method, and the variance reduction method (including
the control variate methods, the importance sampling methods, and the multilevel Monte Carlo methods et al.)
have proven effective for nonlinear PDEs  with the dimension below 20 (see \cite{GELSJGTPVC16}).
The high dimensional problem (that is the so-called curse of dimensionality)
remains a major challenge in the numerical solutions of PDEs, even in the broader field of scientific computing.

A major development occurred in 2017, when E, Han, and Jentzen introduced the deep
BSDE method, one of the first deep learning-based numerical methods for
high-dimensional nonlinear PDEs and BSDEs.
Since then, many neural-network-based numerical methods have been proposed.  They are designed by adjusting, reformulating or extending the deep BSDE method
to solve high dimensional PDEs (see \cite{EWHMJAKT19,GMPHWX22,HCPHWX20,HJHRLJ24,JSPSPYZX25,PHWXGM21,GMLMPHWX22,GMMJWX22} and many others)
and these deep numerical algorithms have also shown their remarkable performance on the high-dimensional problems.
Recently, Cai, Fang and Zhou \cite{ CWFSZT25,CWFSZT24}  introduce the SOC-MartNet method to solve ultra-high-dimensional quasilinear parabolic PDEs and it demonstrates the strong performance on large-scale benchmarks. Furthermore, they propose the deep random difference method in \cite{CWFSZT25a} to reduce variance and improve stability of the SOC-MartNet method.

At the forefront of deep learning methodologies for high-dimensional PDEs, the seminal work by Hure, Pham, and Warin \cite{HCPHWX20} proposed the DBDP1 method, which has become a leading paradigm in the current literature.
It estimates the solutions and their gradient  by the loss functions, for $i=N-1,N-2,\cdots,0$
\begin{equation}\label{DBDP-loss}
J_i^{HPW}(\theta_{i}) = \mathbb{E}\left[ \left|\mathcal{Y}_i - \mathcal{Y}_{i+1}^*
    -hf(t_i,X_i^\pi,\mathcal{Y}_i,\mathcal{Z}_i ) + \mathcal{Z}_i\Delta W_i
    \right|^2\right],
\end{equation}
at each time step $t_i$, where $\mathcal{Y}_i: \mathbb{R}^d \rightarrow \mathbb{R}^n$ and
$\mathcal{Z}_i: \mathbb{R}^d \rightarrow \mathbb{R}^{n\times d}$ denote the sequences of network functions. Then update $(\mathcal{Y}_i^*,\mathcal{Z}_i^*)$ as the solutions of the local
minimization problems.
Thus, we only need to consider the local gradient as updating the parameter of neural networks, which significantly reduces the difficulty of backpropagation and solves the training difficulties brought by long-sequence time dependencies.
The loss function (\ref{DBDP-loss}) is based on the Euler discretization of the BSDE. It measures the residual of the Euler scheme which implies that during the training process, one can directly assess the accuracy of the scheme by observing whether the loss value approaches zero.
But the random noise term $\mathcal{Z}_i\Delta W_i$ in (\ref{DBDP-loss})  may lead to fluctuations especially when $\mathcal{Z}_i$ is large
(such as high volatility or complex hedging requirements).
Naturally, it may cause oscillations in the gradients which lead to divergence.
Besides, the loss function $J_i^{HPW}(\theta_{i})$ is prone to overfitting the noise of the simulated paths, leading to poor generalization ability.

To address these problems, we propose a new deep backward regression-based (DBR) method to approximate the solutions and their gradient of high-dimensional PDEs by the loss functions
\begin{equation}\label{DBR-loss-y}
F_{y,i}(\theta_{y,i}) = \mathbb{E}\left[ \left|\mathcal{Y}_i - \mathbb{E}_i\left[\mathcal{Y}_{i+1}^*
    +hf(t_i,X_i^\pi,\mathcal{Y}_i,\mathcal{Z}_i^* ) \right]
    \right|^2\right],
\end{equation}
and
\begin{equation}\label{DBR-loss-z}
F_{z,i}(\theta_{z,i}) = \mathbb{E}\left[ \left|\mathcal{Z}_i - \mathbb{E}_i\left[\mathcal{Y}_{i+1}^*\frac{\Delta W_i^\top}{h} \right]
    \right|^2\right],
\end{equation}
where $\mathbb{E}_i\left[\cdot\right] = \mathbb{E}\left[\cdot\big|\mathcal{F}_{t_i}\right]$
is the conditional expectation. The above regression-based loss functions which utilize conditional expectations are different from those proposed in Remark 2.3 of Germain, Pham and Warin \cite{GMPHWX22}.
Unlike the DBDP method, which operates directly on the Euler-discretized residual susceptible to pathwise noise  $\Delta W_i$, our DBR method exploits the conditional expectation representation central to classical frameworks like Zhang \cite{ZJ04}. By replacing pathwise noise with its averaged effect, the DBR scheme achieves intrinsic variance reduction. This design choice aligns conceptually with the $L^2$-regularity established by Zhang \cite{ZJ04}, which provides the theoretical foundation for smoothing the martingale integrand $Z$ via step-process approximations.
The accuracy of the DBR scheme can also be tested when computing at each time step the infimum of loss function, which should be equal to zero as the DBDP scheme in \cite{HCPHWX20}.
By explicitly incorporating the conditional expectations, the DBR scheme effectively performs a ``denoising" step prior to loss evaluation. In terms of implementation, this is typically realized through multi-path Monte Carlo averaging. Its essence lies in replacing
the simulated backward stochastic difference equations with their conditional expectation forms and transforming a projection-based stochastic optimization problem into a smoother deterministic function-fitting task.

The DBR algorithm is built on the following ingredients:
(i) the FBSDE is discretized by an Euler-type scheme inspired by Zhang~\cite{ZJ04};
(ii) neural networks with inputs \((t_i,X_i^\pi)\) are used to approximate the
conditional expectations at each time step;
(iii) the \(Z\)-component is trained first and then fixed when training the
\(Y\)-component;
(iv) the conditional-expectation-based losses \eqref{DBR-loss-y} and
\eqref{DBR-loss-z} are used for network training.

Moreover, the core of the DBR method is to approximate the conditional expectation through multi-path Monte Carlo averaging. This design shows some advantages when dealing with high-dimensional or complex-structured PDEs:
(i) noise smoothing: multi-path averaging mitigates the impact of single-path Brownian noise $\Delta W_i$, reducing numerical fluctuations (e.g., the loss function of regression-based methods does not directly depend on $\Delta W_i$).
(ii) generalization ability: the DBR method learns local solutions of multi-paths rather than local solutions of single paths, making them more likely to avoid overfitting.  In particular, for the unbounded benchmark with a complex structure considered in
Example~2, the DBR method mitigates the adverse effects of increasing dimensionality
more effectively than DBDP1 when \(d=15\) and \(d=20\); see Tables~5.5 and~5.6.
The method's consistent performance at these two dimensions, compared with the
substantial deterioration of DBDP1 on the same benchmark, provides evidence of its
practical utility for this class of challenging high-dimensional problems.

The remainder of this paper is organized as follows.
In Section 2, some essential materials  that are used in the whole paper are provided.
The DBR method is designed in Subsection 3.1. An error analysis for the proposed
DBR method is presented in Subsection 3.2 under the population-loss convention. In
addition, under suitable approximation and integrability assumptions, we establish a
half-order convergence result for the proposed DBR method in Subsection 3.3.
In Section 4, we extend the DBR scheme to variational inequalities and prove its corresponding convergence.
We report the numerical results in Section 5.
In the end, we compile the conclusion of this paper in Section 6.

\section{Preliminaries}
This section collects the notation and preliminary results used throughout the
paper, including the probabilistic representation of PDE solutions, neural-network
function classes, and the Euler discretization of FBSDEs.

\subsection{Deep neural networks as function approximators}

The universal approximation theorem in \cite{HKSMWH89} indicates that we can use the neural networks to approximate the unknown functions and this is also reasonable.
Therefore, we recall some notations and basic definitions with respect to the feedforward neural networks
which are a type of deep neural networks and  will be applied in the following parts.
Let
$$
\mathcal{LL}_{d_1, d_2}^\varrho=\left\{\psi: \mathbb{R}^{d_1} \rightarrow \mathbb{R}^{d_2}: \exists~(\mathcal{W}, \beta) \in \mathbb{R}^{d_2 \times d_1} \times \mathbb{R}^{d_2}, \psi(x)=\varrho(\mathcal{W} x+\beta)\right\}
$$
represent the set of the layer functions with the input dimension $d_1$, the output dimension $d_2$
and the nonlinear function (called the activation function) $\varrho:\mathbb{R}^{d_2} \rightarrow \mathbb{R}^{d_2}$. The activation function $\varrho$  works by utilizing componentwise the one
dimensional activation function, namely $\varrho(x_1,x_2,\ldots,x_{d_2}) = (\overline{\varrho}(x_1),\overline{\varrho}(x_2),\ldots,\overline{\varrho}(x_{d_2}))$,
with $\overline{\varrho}(x):\mathbb{R}\rightarrow \mathbb{R}$, to the affine map   $x\in \mathbb{R}^{d_1}\rightarrow\mathcal{W} x+\beta\in \mathbb{R}^{d_2}$ with the weight $\mathcal{W}\in \mathbb{R}^{d_2 \times d_1}$ and the bias $\beta \in \mathbb{R}^{d_2}$. We write $\mathcal{LL}_{d_1, d_2}^\varrho$ as $\mathcal{LL}_{d_1, d_2}$
when $\varrho$ denotes the identity function.
Thus, the set of the feedforward neural networks can be
defined as
$$
\begin{aligned}
\mathcal{NN}_{\mathbf{d}_0, \mathbf{d}}^{\varrho, L, \mathbf{n}}= & \left\{\varphi: \mathbb{R}^{\mathbf{d}_0} \rightarrow \mathbb{R}^{\mathbf{d}}: \exists~ \psi_0 \in \mathcal{LL}_{\mathbf{d}_0, n_0}^{\varrho_0}, \exists~ \psi_\ell \in \mathcal{LL}_{n_{\ell-1}, n_{\ell}}^{\varrho_{\ell}}, \ell=1,2 \ldots,L-1,\right. \\
& \left.\exists~ \psi_{L} \in \mathcal{LL}_{n_{L-1}, \mathbf{d}}, \varphi=\psi_{L} \circ \psi_{L-1} \circ \cdots \circ\psi_0\right\}
\end{aligned}
$$
with the input dimension $\mathbf{d}_0$, the output dimension $\mathbf{d}$ and $L$ hidden layers with $n_\ell$ neurons in the $\ell$-th layer for $\ell = 0,1,\ldots,L-1$. Naturally,
the coefficients $\mathbf{d}_0, \mathbf{d}, L, \mathbf{n} = \{n_\ell\}_{0\le\ell\le L-1}$
and the activation $\varrho = \{\varrho_\ell\}_{0\le\ell\le L-1}$ form the architecture of the network.

\subsection{Probabilistic representation for the PDE solutions}
Our deep probabilistic numerical method for the PDEs (\ref{PDE}) relies on FBSDEs representation
of its solutions. The well-known nonlinear Feynman-Kac formula (see \cite{PEPS90,PS91,PEPS92}) shows that
on the filtered complete probability space $(\Omega,\mathcal{F},(\mathcal{F}_t)_{0\le t\le T},\mathbb{P})$,
the solution $(Y,Z)\in \mathbb{R}^n\times \mathbb{R}^{n\times d}$ of the FBSDEs
\begin{equation}
\left\{
\begin{aligned}
dX_{t}= &\mu(t,X_{t})dt+\sigma(t,X_{t})dW_{t},\quad X_{0}= x_0,\\
-dY_{t}= &f(t,X_{t},Y_{t},Z_{t})dt-Z_{t}dW_{t},\quad Y_{T}= g(X_{T}),\\
\end{aligned}
\right.  \label{dFBSDE}%
\end{equation}
related to the solution $u$ of the PDEs (\ref{PDE}) via
\begin{equation}
Y_{t} = u(t,X_{t}),\qquad Z_{t} = \sigma^\top(t,X_{t}) D_xu(t,X_{t}),
\label{FK}%
\end{equation}
where \(u\) is a sufficiently smooth classical solution. In order to include
unbounded terminal functions and unbounded solutions, we shall use the following
polynomial growth class instead of the bounded class \(C_b^{1,2}\). We write
\[
u\in C_p^{1,2}([0,T]\times\mathbb{R}^d)
\]
if \(u\in C^{1,2}([0,T]\times\mathbb{R}^d)\) and there exist constants
\(C_u>0\) and \(q_u\geq 0\) such that
\begin{equation}
\begin{aligned}
|u(t,x)|+|D_xu(t,x)|+|D_x^2u(t,x)|
\leq C_u\left(1+|x|^{q_u}\right),
\qquad (t,x)\in[0,T]\times\mathbb{R}^d .
\end{aligned}
\label{poly-growth-u}
\end{equation}
The bounded case \(C_b^{1,2}\) corresponds to \(q_u=0\). Under this regularity and
the integrability assumptions stated below, the nonlinear Feynman-Kac formula yields
\[
Y_t=u(t,X_t),
\qquad
Z_t=\sigma^\top(t,X_t)D_xu(t,X_t).
\]
Here \((\mathcal F_t)_{0\le t\le T}\) denotes the standard Brownian filtration;
\(W_t\) denotes a \(d\)-dimensional Brownian motion; \(x_0\in\mathbb R^d\)
represents a given initial value of the stochastic differential equation (SDE)
in \eqref{dFBSDE}.

For readers' convenience, before giving the time-discretization scheme of FBSDEs (\ref{dFBSDE}),
we introduce the following notations.
We denote the grid of the time interval $[0,T]$ by $\pi$, namely
$\pi=\{0=t_0<t_1< \cdots< t_N=T\}$, where $t_i=ih, h=\frac{T}{N}$
for $i=0,1,\cdots,N,N\in \mathbb{N}^+$
and $\Delta W_{i}=W_{t_{i+1}}-W_{t_i}$ the $(i+1)$-th Brownian motion increment.
Then the Euler time-discretization of the backward stochastic differential equation (BSDE)  in (\ref{dFBSDE}), at the mesh points $t_i$, is
 as follows
\begin{equation}
Y_{i}^{\pi}= Y_{i+1}^{\pi} + hf(t_{i},X_i^\pi,Y_{i}^\pi,Z_i^\pi) - Z_i^\pi\Delta W_{i},
\end{equation}
which also reads as the conditional expectation formula (see \cite{ZJ04})
\begin{equation}
\left\{
\begin{aligned}
Y_{i}^{\pi}= & \mathbb{E}_{i}\Big[Y_{i+1}^{\pi}+hf(t_{i},X_i^\pi,Y_{i}^\pi,Z_i^\pi)\Big],\\
Z_{i}^{\pi} = &\mathbb{E}_{i}\big[Y_{i+1}^{\pi}\frac{\Delta W_{i}^\top}{h}\big],
\end{aligned}
\right.
\label{NS}%
\end{equation}
where
$ X_{i+1}^{\pi}=  X_{i}^{\pi} + h \mu(t_{i},X_{i}^\pi) +\sigma(t_{i},X_{i}^\pi)\Delta W_{i}$; $X_{i}^{\pi}$, $Y_{i}^{\pi}$ and $Z_{i}^{\pi}$ denote the time-discretization form of $X$, $Y$ and $Z$ at $t_{i}$ respectively.
\medskip
\noindent
Since the first equation in the above conditional expectation formula is implicit
with respect to \(Y_i^\pi\), we impose a standard small time-step condition to ensure
its well-posedness. Let \(L_y\) denote a Lipschitz constant of \(f\) with respect to
the \(y\)-variable. Under Assumption (iii) below, one can take \(L_y\leq L\). We
assume throughout that
\begin{equation}
hL_y<1.
\label{time-step-condition}
\end{equation}
Indeed, for fixed \(i\), \(X_i^\pi\), and \(Z_i^\pi\), the map
\[
y\longmapsto
\mathbb{E}_i\left[Y_{i+1}^\pi\right]
+
h f(t_i,X_i^\pi,y,Z_i^\pi)
\]
is a contraction on \(\mathbb{R}^n\) whenever \eqref{time-step-condition} holds.
Hence the implicit equation for \(Y_i^\pi\) admits a unique solution at each time
step.
\medskip

\section{Deep backward regression-based method}

In this section, we formulate the DBR method to approximate the conditional expectations in scheme (\ref{NS}) using learned data.
Our aim is to provide a fully implementable algorithm.
Subsequently, we present a comprehensive error analysis of the DBR method. Finally, we investigate its convergence rate.

\subsection{Deep learning method to approximate the conditional expectation}

To implement the time-discrete scheme \eqref{NS}, the conditional expectations must
be approximated numerically. We approximate these conditional expectations by
training neural networks at each time step.

In what follows, the forward process $X_{\cdot}^{\pi,m} $ is simulated by Monte Carlo simulation as follows, for $i=0,1,\cdots,N-1$
\begin{equation}\label{SDE-M}
X_{i+1}^{\pi,m} =X_{i}^{\pi,m} +\mu(t_i,X_i^{\pi,m})h+\sigma(t_i,X_i^{\pi,m})\Delta W_i^m,  \quad X_{0}^{\pi,m}= x_0,
\end{equation}
where $\Delta W_i^m \sim \mathcal{N}(0, h)$,
 $m = 1,2,\cdots,M$, $M\in\mathbb{N}$.
The backward process $Y_{\cdot}^{\pi,m} $ is simulated by Monte Carlo method in the following ways, for $i=N-1,N-2,\cdots,0$
\begin{equation}\label{BSDE-pi-m}
Y_{i}^{\pi,m}= Y_{i+1}^{\pi,m} + hf(t_{i},X_i^{\pi,m},Y_{i}^{\pi,m},Z_i^{\pi,m}) - Z_i^{\pi,m}\Delta W_{i}^m,
\end{equation}
which also expresses as the conditional expectation form
\begin{equation}
\left\{
\begin{aligned}
Y_{i}^{\pi,m}= & \mathbb{E}_{i}\Big[Y_{i+1}^{\pi,m}+hf(t_{i},X_i^{\pi,m},Y_{i}^{\pi,m},Z_i^{\pi,m})\Big],\\
Z_{i}^{\pi,m} = &\mathbb{E}_{i}\big[Y_{i+1}^{\pi,m}\frac{(\Delta W_{i}^m)^\top}{h}\big].
\end{aligned}
\right.
\label{NS-M}%
\end{equation}
At each time step $t_i$, the forward process $X_{i}^{\pi,m,k}$ is generated as below:
$$
X_{i+1}^{\pi,m,k} =X_{i}^{\pi,m} +\mu(t_i,X_i^{\pi,m})h+\sigma(t_i,X_i^{\pi,m})\Delta W_i^{m,k},
$$
where
$\Delta W_i^{m,k} \sim \mathcal{N}(0, h)$ is independently and identically distributed as $\Delta W_i^{m} $;
 $k = 1,2,\cdots,K_i$; 
$K_i \in \mathbb{N}$ denotes the number of trajectories in the Monte Carlo simulation.

Now, the approximation algorithm is implemented in detail.
The terms on the right hand side of (\ref{NS-M}) are written as
\begin{equation}
\left\{
\begin{aligned}
Y_{i}^{\pi,m}
= & \frac{1}{K_i}\sum\limits_{k=1}^{K_i} Y_{i+1}^{\pi,m,k}+hf(t_{i},X_{i}^{\pi,m},Y_{i}^{\pi,m},Z_{i}^{\pi,m})+R_{i,1}^{y},\\
Z_{i}^{\pi,m} 
= & \frac{1}{K_i}\sum\limits_{k=1}^{K_i} \left[Y_{i+1}^{\pi,m,k}\frac{\left(\Delta W_{i}^{m,k}\right)^\top}{h}\right]+R_{i,1}^{z},\\
\end{aligned}
\right.
\label{NS-MC}%
\end{equation}
by Monte Carlo method, where
$Y_{i+1}^{\pi,m,k} $ is independently and identically distributed as $Y_{i+1}^{\pi,m}$;
\begin{equation}
\begin{aligned}
R_{i,1}^{y} =& \mathbb{E}_{i}\Big[Y_{i+1}^{\pi,m}\Big]
-\frac{1}{K_i}\sum\limits_{k=1}^{K_i} Y_{i+1}^{\pi,m,k},\\
R_{i,1}^{z} =& \mathbb{E}_{i}\left[Y_{i+1}^{\pi,m}\frac{(\Delta W_{i}^m)^\top}{h}\right]
- \frac{1}{K_i}\sum\limits_{k=1}^{K_i} Y_{i+1}^{\pi,m,k}\frac{\left(\Delta W_{i}^{m,k}\right)^\top}{h}.
\end{aligned}
\nonumber
\end{equation}
Consequently, following the Monte Carlo simulation, the computational formula for  $(Y_{i}^{\pi,m},Z_{i}^{\pi,m})$ is given by
\begin{equation}
\left\{\begin{aligned}
Y_{i,K_i}^{\pi,m} & =\frac{1}{K_i}\sum\limits_{k=1}^{K_i}Y_{i+1, K_i}^{\pi,m, k}+hf\left(t_i, X_{i}^{\pi,m}, Y_{i, K_i}^{\pi,m} , Z_{i, K_i}^{\pi,m}   \right),\\
Z_{i, K_i}^{\pi,m}  & =\frac{1}{K_i}\sum\limits_{k=1}^{K_i}Y_{i+1, K_i}^{\pi,m, k} \frac{(\Delta W_{i}^{m,k})^\top}{h}.
\end{aligned}\right.
\label{Deep-NN-E-k}
\end{equation}
Replacing the two random variables in local Monte Carlo approximates in (\ref{Deep-NN-E-k}) with
two deep feedforward neural networks, namely
\begin{equation}
\left\{
\begin{aligned}
Y_{i,K_i}^{\pi,m}
= & \mathcal{NN}_{y,i,1+d, n}^{\varrho, L, \mathbf{n}} (p_i^m,\theta^0_{y,i})+ R_{i,2}^{y,0},\\
Z_{i,K_i}^{\pi,m}
= & \mathcal{NN}_{z,i,1+d, n\times d}^{\varrho, L, \mathbf{n}}(p^m_i,\theta^0_{z,i})+ R_{i,2}^{z,0},\\
\end{aligned}
\right.
\label{NS-MC-1}%
\end{equation}
where  
$p_i^m = (t_i,X_{i}^{\pi,m})$; $\theta^0_{y,i} $ and
$\theta^0_{z,i} $ denote the initial parameters of the two neural networks at each time step;
\begin{equation}
\begin{aligned}
R_{i,2}^{y,0}=& Y_{i,K_i}^{\pi,m}
-\mathcal{NN}_{y,i,1+d, n}^{\varrho, L, \mathbf{n}}(p_i^m,\theta^0_{y,i}),\\
R_{i,2}^{z,0} =&Z_{i,K_i}^{\pi,m}
-\mathcal{NN}_{z,i,1+d, n\times d}^{\varrho, L, \mathbf{n}}(p_i^m,\theta^0_{z,i}).
\end{aligned}
\nonumber
\end{equation}
To train the deep feedforward neural networks, we introduce the expectation loss functions in the following forms
\begin{equation}
\left\{
\begin{aligned}
F_{y,i}^{r_j}(\theta_{y,i}^{r_j})= & \frac{1}{M}\sum_{m=1}^{M} \left| S_{y,i}\left(p_i^m, X_{i+1}^{\pi, m} \right)
 - \mathcal{NN}_{y,i,1+d, n}^{\varrho, L, \mathbf{n}}(p_i^m,\theta_{y,i}^{r_j})\right|^2,\\
F_{z,i}^{r_j}(\theta_{z,i}^{r_j})= & \frac{1}{M}\sum_{m=1}^{M} \left| S_{z,i}\left(p_i^m,\Delta W_{i}^{m}, X_{i+1}^{\pi, m} \right)
 - \mathcal{NN}_{z,i,1+d, n\times d}^{\varrho, L, \mathbf{n}}(p_i^m,\theta_{z,i}^{r_j})
\right|^2,
\end{aligned}
\right.
\label{NS-MC-3}%
\end{equation}
where
$p_{i}^{ m, k}= (t_i,X_{i}^{\pi,m,k})$;
$r_j\in \mathbb{N}^+$ denotes the number of iterations of the stochastic gradient descent (SGD) method;
$\theta^{r_j}_{y,i}  = \theta_{y,i}^{r_j-1}-\rho_{y,i}^{r_j-1}\nabla_{\theta_{y,i}^{r_j-1}}F_{y,i}^{r_j-1}(\theta_{y,i}^{r_j-1}),$
$\theta^{r_j}_{z,i}  = \theta_{z,i}^{r_j-1}-\rho_{z,i}^{r_j-1}\nabla_{\theta_{z,i}^{r_j-1}}F_{z,i}^{r_j-1}(\theta_{z,i}^{r_j-1})$;
$\rho_{y,i}^{r_j},\rho_{z,i}^{r_j}\in(0,+\infty)$ denote the learning rates;
suppose that the SGD algorithm is capable of converging to the optimal solution:
$\theta^{*}_{y,i} = \arg \min\limits_{\theta_{y,i}^{r_j}} F_{y,i}^{r_j}(\theta_{y,i}^{r_j})$,
$\theta^{*}_{z,i} = \arg \min\limits_{\theta_{z,i}^{r_j}} F_{z,i}^{r_j}(\theta_{z,i}^{r_j})$;
\begin{equation}
\begin{aligned}
S_{z,i}\left(p_i^m,\Delta W_{i}^{m}, X_{i+1}^{\pi, m} \right)
&=\frac{1}{K_i}\sum\limits_{k=1}^{K_i} \mathcal{NN}_{y,i+1,1+d, n}^{\varrho, L, \mathbf{n}}(p_{i+1}^{m,k},\theta_{y,i+1}^*) \frac{\left(\Delta W_{i}^{m,k}\right)^\top}{h},\\
S_{y,i}\left(p_i^m, X_{i+1}^{\pi, m} \right)
&= \frac{1}{K_i}\sum\limits_{k=1}^{K_i} \mathcal{NN}_{y,i+1,1+d, n}^{\varrho, L, \mathbf{n}}(p_{i+1}^{m,k},\theta_{y,i+1}^*)\\
&\quad+hf(t_{i},X_{i}^{\pi,m},\mathcal{NN}_{y,i,1+d, n}^{\varrho, L, \mathbf{n}}(p_{i}^{m},\theta_{y,i}^{r_j}),\mathcal{NN}_{z,i,1+d, n\times d}^{\varrho, L, \mathbf{n}}(p_{i}^{m},\theta_{z,i}^*)).
\end{aligned}
\nonumber
\end{equation}
Thus, by the optimization of the SGD method, (\ref{NS-MC-1}) is rewritten as
\begin{equation}
\left\{
\begin{aligned}
Y_{i,K_i}^{\pi,m}
= &  \mathcal{NN}_{y,i,1+d, n}^{\varrho, L, \mathbf{n}}(p^{m}_{i},\theta^{*}_{y,i})+ R_{i,2}^{y},\\
Z_{i,K_i}^{\pi,m}
= &\mathcal{NN}_{z,i,1+d, n\times d}^{\varrho, L, \mathbf{n}}(p^{m}_{i},\theta^{*}_{z,i})+ R_{i,2}^{z},
\end{aligned}
\right.
\label{NS-MC-2-1}%
\end{equation}
with 
\begin{equation}
\begin{aligned}
R_{i,2}^{y}&=
Y_{i,K_i}^{\pi,m}- \mathcal{NN}_{y,i,1+d, n}^{\varrho, L, \mathbf{n}}(p_{i}^m,\theta_{y,i}^*),\\
R_{i,2}^{z}&=
Z_{i,K_i}^{\pi,m}
 - \mathcal{NN}_{z,i,1+d, n\times d}^{\varrho, L, \mathbf{n}}(p_{i}^m,\theta_{z,i}^*).
\end{aligned}
\nonumber
\end{equation}

Therefore, based on the DBR method, we define the fully discrete approximations
\(\{\mathcal{Y}_i\}_{i=0}^{N}\) and
\(\{\mathcal{Z}_i\}_{i=0}^{N-1}\) for the analytical solutions
\((Y_{t_i},Z_{t_i})\), \(i=0,1,\ldots,N\), \(m=1,2,\ldots,M\), as follows:
\begin{enumerate}
\item
the terminal condition is
\[
\mathcal{Y}_N=g(X_N^{\pi,m}).
\]
No terminal value \(\mathcal{Z}_N\) is required by the backward scheme.

\item for \(0\le i<N\), the transition from \(i+1\) to \(i\) is given by
\begin{equation}
\left\{
\begin{aligned}
\mathcal{Y}_{i}
= &\ \mathcal{NN}_{y,i,1+d,n}^{\varrho,L,\mathbf{n}}
(p^{m}_{i},\theta^{*}_{y,i}),\\
\mathcal{Z}_{i}
= &\ \mathcal{NN}_{z,i,1+d,n\times d}^{\varrho,L,\mathbf{n}}
(p^{m}_{i},\theta^{*}_{z,i}).
\end{aligned}
\right.
\label{NS-DMC}%
\end{equation}
\end{enumerate}

For notational consistency in the subsequent error analysis, we use the following
terminal-network convention:
\begin{equation}
\begin{aligned}
\mathcal{NN}_{y,N,1+d,n}^{\varrho,L,\mathbf{n}}
(p_N^m,\theta_{y,N}^{*})
:=g(X_N^{\pi,m}).
\end{aligned}
\label{NS-DMC-terminal-convention}
\end{equation}
Here \(\mathcal{NN}_{y,N,1+d,n}^{\varrho,L,\mathbf{n}}\) is only a notational
convention at the terminal time and no neural network is trained at \(t_N\).

In summary, the DBR algorithm is provided to compute the numerical solutions $\mathcal{Y}_{i}$ and $\mathcal{Z}_{i}$
(see \textbf{Algorithm 1}).

\begin{algorithm}[H]
\caption{DBR method for FBSDEs}
\label{Alg-DBR}
Grid \(0=t_0<t_1<\cdots<t_N=T\) with step \(h=T/N\);
the number \(M\) of base Monte Carlo paths; the numbers \(K_i\) of inner
Monte Carlo samples.
Trained networks
\(\{\mathcal{NN}_{y,i},\mathcal{NN}_{z,i}\}_{i=0}^{N-1}\) and the final estimate
\(\mathcal{Y}_0\).

Initialize the network parameters
\(\theta_{y,i},\theta_{z,i}\) for \(i=0,\ldots,N-1\), e.g., by Xavier initialization;

\textbf{Forward simulation of base paths:}

\For{\(m=1,\ldots,M\)}{
Set \(X_0^{\pi,m}=x_0\);

\For{\(i=0,\ldots,N-1\)}{
Generate \(\Delta W_i^m\sim N(0,hI_d)\);

Compute
\[
X_{i+1}^{\pi,m}
\leftarrow
X_i^{\pi,m}
+
\mu(t_i,X_i^{\pi,m})h
+
\sigma(t_i,X_i^{\pi,m})\Delta W_i^m .
\]
}
}

\textbf{Backward training:}

\For{\(i=N-1,\ldots,0\)}{

\For{\(m=1,\ldots,M\)}{

\For{\(k=1,\ldots,K_i\)}{
Generate \(\Delta W_i^{m,k}\sim N(0,hI_d)\);

Compute the inner next state
\[
X_{i+1}^{\pi,m,k}
\leftarrow
X_i^{\pi,m}
+
\mu(t_i,X_i^{\pi,m})h
+
\sigma(t_i,X_i^{\pi,m})\Delta W_i^{m,k}.
\]

Set
\[
p_i^m:=(t_i,X_i^{\pi,m}),
\qquad
p_{i+1}^{m,k}:=(t_{i+1},X_{i+1}^{\pi,m,k}).
\]

\eIf{\(i=N-1\)}{
\[
\mathcal{Y}_{i+1}^{m,k}
\leftarrow
g(X_{i+1}^{\pi,m,k});
\]
}{
\[
\mathcal{Y}_{i+1}^{m,k}
\leftarrow
\mathcal{NN}_{y,i+1,1+d,n}^{\varrho,L,\mathbf n}
(p_{i+1}^{m,k},\theta_{y,i+1}^{*});
\]
}
}
}

Define the \(Z\)-loss
\[
\mathcal{L}_{z,i}(\theta_{z,i})
:=
\frac1M\sum_{m=1}^{M}
\left|
\frac1{K_i}\sum_{k=1}^{K_i}
\mathcal{Y}_{i+1}^{m,k}
\frac{(\Delta W_i^{m,k})^\top}{h}
-
\mathcal{NN}_{z,i,1+d,n\times d}^{\varrho,L,\mathbf n}
(p_i^m,\theta_{z,i})
\right|^2 .
\]

Update \(\theta_{z,i}\) by SGD, and denote the obtained optimal parameter by
\(\theta_{z,i}^{*}\);

Define the \(Y\)-loss
\[
\mathcal{L}_{y,i}(\theta_{y,i})
:=
\frac1M\sum_{m=1}^{M}
\Bigg|
\frac1{K_i}\sum_{k=1}^{K_i}
\mathcal{Y}_{i+1}^{m,k}
+
h f\left(
t_i,
X_i^{\pi,m},
\mathcal{NN}_{y,i,1+d,n}^{\varrho,L,\mathbf n}
(p_i^m,\theta_{y,i}),
\mathcal{NN}_{z,i,1+d,n\times d}^{\varrho,L,\mathbf n}
(p_i^m,\theta_{z,i}^{*})
\right)\] \\
$ -\mathcal{NN}_{y,i,1+d,n}^{\varrho,L,\mathbf n}
(p_i^m,\theta_{y,i})
\Bigg|^2.$

Update \(\theta_{y,i}\) by SGD, and denote the obtained optimal parameter by
\(\theta_{y,i}^{*}\);

Set, for \(m=1,\ldots,M\),
\[
\mathcal{Y}_i^m
\leftarrow
\mathcal{NN}_{y,i,1+d,n}^{\varrho,L,\mathbf n}
(p_i^m,\theta_{y,i}^{*}),
\qquad
\mathcal{Z}_i^m
\leftarrow
\mathcal{NN}_{z,i,1+d,n\times d}^{\varrho,L,\mathbf n}
(p_i^m,\theta_{z,i}^{*}).
\]
}

\BlankLine
Final estimation:
\[
\mathcal{Y}_0
=
\mathcal{NN}_{y,0,1+d,n}^{\varrho,L,\mathbf n}
((t_0,x_0),\theta_{y,0}^{*}).
\]
\end{algorithm}

The DBDP method in \cite{HCPHWX20} uses $\mathcal{Y}_{i+1}^*
+hf(t_i,X_i^\pi,\mathcal{Y}_i,\mathcal{Z}_i ) -\mathcal{Z}_i\Delta W_i$ as the label for $(\mathcal{Y}_i(\theta_i), \mathcal{Z}_i(\theta_i))$ ( or $(\mathcal{Y}_i(\theta_i),  \sigma^\top(t_i,\cdot) D_x\mathcal{Y}_i(\theta_i)$)
at each time step $t_i$. $\mathcal{Y}_i$ and $\mathcal{Z}_i$
are $\mathcal{F}_{t_i}$ measurable while the information of Brownian motion at moment $t_{i+1}$ is contained in the label.
This causes some fluctuations in the estimates of $\mathcal{Y}_i$ and $\mathcal{Z}_i$.
Our DBR method employs $\mathbb{E}_i\left[\mathcal{Y}_{i+1}^*\right]
+hf(t_i,X_i^\pi,\mathcal{Y}_i,\mathcal{Z}_i^* )$ and $\mathbb{E}_i\left[\mathcal{Y}_{i+1}^*\frac{\Delta W_i^\top}{h} \right]$ as the labels for $\mathcal{Y}_i$ and $\mathcal{Z}_i$ respectively. This can smooth the noise  $\Delta W_i$ by
averaging, and then makes the estimation  $\mathcal{Y}_i$ and $\mathcal{Z}_i$ more stable at each time step.
The DBDP method relies on the information of single-path  for labeling, which is random and prone to  learning the incorrect solutions of PDEs.
Our DBR method, by
multi-path averaging, offsets the noise of single-path. This makes our method  more robust and having stronger generalization ability (especially in high-dimensional or complex PDEs).

\subsection{Error analysis}

We now estimate the error of the numerical solutions $(\mathcal{Y}_i,\mathcal{Z}_i)_{0\le i\le N-1}$
and the analytical solutions $(Y_{t_i},Z_{t_i})_{0\le i\le N-1}$.
This result provides an error estimate of the DBR method (\ref{NS-DMC})
in the $L^2$ sense. 

The standard assumptions which guarantee the existence and uniqueness
on the coefficients of the FBSDEs (\ref{dFBSDE}) associated to the semilinear PDEs (\ref{PDE})
are made as following:
\begin{description}
\item[(i)]
$x_0$ is square integrable.
\item[(ii)]
$\mu$ and $\sigma$ are Lipschitz continuous of the spatial variable $x$;
assume
\[
\sup\limits_{0\le t\le T}\{|\mu(t,0)|+|\sigma(t,0)|\}\leq L,
\]
where  $L>0$  denotes the Lipschitz constant.
\item[(iii)]
 $f$ is $\frac{1}{2}-$H\"{o}lder continuous with respect to the time variable $t$ and
uniformly Lipschitz continuous in all other variables
$$
|f(t_1,x_1,y_1,z_{1})-f(t_2,x_2,y_2,z_{2})|\le
L(|t_{1}-t_{2}|^{\frac{1}{2}}+|x_{1}-x_{2}|+|y_{1}-y_{2}|+|z_{1}-z_{2}|),
$$
for all $t\in[0,T],$
$x_{1},x_{2}\in \mathbb{R}^d$, $y_{1},y_{2} \in\mathbb{R}^n$ and $z_{1},z_{2}\in\mathbb{R}^{n\times d}$;
and suppose
\[
\sup\limits_{0\le t\le T}\{|f(t,0,0,0)|\}\leq L.
\]
\item[(iv)]
$g$ is a linear growth function.
\end{description}
\medskip
\noindent
In the error analysis below, the same time-step condition
\eqref{time-step-condition} is imposed. Consequently, all implicit equations
appearing in the DBR scheme and in the auxiliary processes are well-defined. 

Now, we investigate the errors of the scheme (\ref{NS-DMC}) and define, for $i=N-1, N-2,\cdots,0$
\begin{equation}
\left\{\begin{aligned}
\mathcal{U}_{i}^m & :=\mathbb{E}_i\left[\mathcal{NN}_{y,i+1,1+d, n}^{\varrho, L, \mathbf{n}}(p_{i+1}^{m},\theta^{*}_{y,i+1})+hf\left(t_i, X_{i}^{\pi,m}, \mathcal{U}_{i}^m, \mathcal{V}_{i}^m\right)  \right],\\
\mathcal{V}_{i}^m  & :=\mathbb{E}_i\left[\mathcal{NN}_{y,i+1,1+d, n}^{\varrho, L, \mathbf{n}}(p_{i+1}^{m} ,\theta^{*}_{y,i+1})\frac{(\Delta W_{i}^{m})^\top}{h}\right],
\end{aligned}\right.
\label{Deep-NN-E}
\end{equation}
and by the Markov property of the discretized forward process
$\{X_i^{\pi,m}\}_{0\le i\le N}$, we have
\begin{equation}
\begin{aligned}
\mathcal{U}_{i}^m = \widetilde{u}(t_i,X^{\pi,m}_i),
\quad
\mathcal{V}_{i}^m = \widetilde{v}(t_i,X^{\pi,m}_i),
\end{aligned}
\label{Deep-NN-E--YZK-1}
\end{equation}
where $\widetilde{u}$ and $\widetilde{v}$ are some deterministic (but unknown) functions;
\begin{equation}
\left\{\begin{aligned}
\widetilde{\mathcal{U}}_{i}^m & :=\frac{1}{K_i}\sum\limits_{k=1}^{K_i}\mathcal{NN}_{y,i+1,1+d, n}^{\varrho, L, \mathbf{n}}(p_{i+1}^{m,k},\theta^{*}_{y,i+1})+hf\left(t_i, X_{i}^{\pi,m}, \widetilde{\mathcal{U}}_{i}^m, \widetilde{\mathcal{V}}_{i}^m\right) ,\\
\widetilde{\mathcal{V}}_{i}^m  & :=\frac{1}{K_i}\sum\limits_{k=1}^{K_i}\mathcal{NN}_{y,i+1,1+d, n}^{\varrho, L, \mathbf{n}}(p_{i+1}^{m,k} ,\theta^{*}_{y,i+1})\frac{(\Delta W_{i}^{m,k})^\top}{h}.
\end{aligned}\right.
\label{Deep-NN-E--YZK}
\end{equation}
Let us introduce
\begin{equation}
\begin{aligned}
&\varepsilon_{i}^{y} = \inf\limits_{\mathcal{NN}_{y,i,1+d, n}^{\varrho, L, \mathbf{n}}}\mathbb{E}\left[\left|\widetilde{u}(t_i,X^{\pi,m}_i)-\mathcal{NN}_{y,i,1+d, n}^{\varrho, L, \mathbf{n}}(p_{i}^{m},\theta^{r_j}_{y,i})\right|^2\right],\\
&\varepsilon_{i}^{z} = \inf\limits_{\mathcal{NN}_{z,i,1+d, n\times d}^{\varrho, L, \mathbf{n}}}\mathbb{E}\left[\left|\widetilde{v}(t_i,X^{\pi,m}_i)-\mathcal{NN}_{z,i,1+d, n\times d}^{\varrho, L, \mathbf{n}}(p_{i}^{m},\theta^{r_j}_{z,i})\right|^2\right],
\end{aligned}
\label{Deep-NN-E--YZK-21}
\end{equation}
and
\begin{equation}
\begin{aligned}
&\widetilde{\varepsilon}_{i}^{y} = \inf\limits_{\mathcal{NN}_{y,i,1+d, n}^{\varrho, L, \mathbf{n}}}\mathbb{E}\left[\left|\widetilde{\mathcal{U}}_i^{m}-\mathcal{NN}_{y,i,1+d, n}^{\varrho, L, \mathbf{n}}(p_{i}^{m},\theta^{r_j}_{y,i})\right|^2\right],\\
&\widetilde{\varepsilon}_{i}^{z} = \inf\limits_{\mathcal{NN}_{z,i,1+d, n\times d}^{\varrho, L, \mathbf{n}}}\mathbb{E}\left[\left|\widetilde{\mathcal{V}}_i^{m}-\mathcal{NN}_{z,i,1+d, n\times d}^{\varrho, L, \mathbf{n}}(p_{i}^{m},\theta^{r_j}_{z,i})\right|^2\right],
\end{aligned}
\label{Deep-NN-E--YZK-2}
\end{equation}

By a slight abuse of notation, we henceforth write $\widetilde{\mathcal{V}}_{i}^m$
to denote $S_{z,i}\left(p_i^m,\Delta W_{i}^{m}, X_{i+1}^{\pi, m} \right)$,
to shift the focus from the specific functional form of the network to its role as an approximated function.

\noindent\textbf{Population-loss convention.}
The loss functions in \eqref{NS-MC-3} are empirical losses used for the
implementation of Algorithm 1. In the following error analysis, however, we work
with their population counterparts. More precisely, the expectation
\(\mathbb{E}[\cdot]\) is taken with respect to all sources of randomness involved in
\(X_i^{\pi,m}\), \(\widetilde{\mathcal{U}}_i^m\), and
\(\widetilde{\mathcal{V}}_i^m\). For fixed \(i\) and \(r_j\), we define
\begin{equation}
\begin{aligned}
\mathcal{F}_{z,i}^{r_j}(\theta_{z,i}^{r_j})
:=&
\mathbb{E}\left[
\left|
\widetilde{\mathcal{V}}_{i}^m
-
\mathcal{NN}_{z,i,1+d,n\times d}^{\varrho,L,\mathbf{n}}
(p_i^m,\theta_{z,i}^{r_j})
\right|^2
\right],
\end{aligned}
\label{POP-LOSS-Z}
\end{equation}
and, after fixing the population minimizer \(\theta_{z,i}^{*}\),
\begin{equation}
\begin{aligned}
\mathcal{F}_{y,i}^{r_j}(\theta_{y,i}^{r_j})
:=&
\mathbb{E}\Bigg[
\bigg|
\widetilde{\mathcal{U}}_{i}^m
+h\Big(
f\big(t_i,X_i^{\pi,m},
\mathcal{NN}_{y,i,1+d,n}^{\varrho,L,\mathbf{n}}
(p_i^m,\theta_{y,i}^{r_j}),
\mathcal{NN}_{z,i,1+d,n\times d}^{\varrho,L,\mathbf{n}}
(p_i^m,\theta_{z,i}^{*})\big)
\\
&\qquad\qquad
-
f\big(t_i,X_i^{\pi,m},
\widetilde{\mathcal{U}}_{i}^m,
\widetilde{\mathcal{V}}_{i}^m\big)
\Big)
-
\mathcal{NN}_{y,i,1+d,n}^{\varrho,L,\mathbf{n}}
(p_i^m,\theta_{y,i}^{r_j})
\bigg|^2
\Bigg].
\end{aligned}
\label{POP-LOSS-Y}
\end{equation}
Throughout the proof of Theorem 3.1, the symbols
\(F_{y,i}^{r_j}\) and \(F_{z,i}^{r_j}\) are understood as the population losses
\(\mathcal{F}_{y,i}^{r_j}\) and \(\mathcal{F}_{z,i}^{r_j}\), respectively. The empirical
losses in \eqref{NS-MC-3} are Monte Carlo approximations of these population
losses. Therefore, Theorem 3.1 should be interpreted as an approximation-error
estimate under exact minimization of the population losses. The additional errors
arising from replacing the population losses by their empirical counterparts and from
non-exact stochastic optimization are not included in this theorem; they can be
incorporated separately as generalization and optimization residuals.
\medskip

\begin{theorem} \label{ae-DMCE}
Suppose the assumptions (i)-(iv) hold.
Let $(Y_{t_i},Z_{t_i})$
and $(\mathcal{Y}_{i},\mathcal{Z}_{i})$ be solutions of the FBSDEs  (\ref{dFBSDE}) and solutions of the DBR method
(\ref{NS-DMC}) respectively.
Then, for \(h\) small enough and satisfying \eqref{time-step-condition}, we have
\begin{equation}
\begin{aligned}
&\max\limits_{0\le i\le N-1}\mathbb{E}\left[\left|Y_{t_i}-\mathcal{Y}_{i}\right|^2\right]
+\mathbb{E}\left[\sum\limits_{i=0}^{N-1}\int_{t_{i}}^{t_{i+1}} |Z_{s}-\mathcal{Z}_{i}|^2ds \right]\\
&\le C\mathbb{E}\left[\left| g(X_T)-g(X_N^{\pi,m})\right|^2\right] + Ch
+C\mathbb{E}\left[\sum\limits_{i=0}^{N-1}\int_{t_{i}}^{t_{i+1}} |Z_{s}-\bar{Z}_{t_i}|^2ds \right]\\
&\quad+C\sum\limits_{i=0}^{N-1}\mathbb{E}\left[
N\varepsilon_{i}^{y}
+\varepsilon_{i}^{z}
+\frac{N}{K}\right],
\nonumber
\end{aligned}
\end{equation}
where $K=\min\limits_{0\le i\le N-1}\{K_i\}$; 
 $\bar{Z}_{t_i}=\mathbb{E}_i\left[\frac{1}{h} \int_{t_i}^{t_{i+1}} Z_t \mathrm{~d} t\right]$; 
$C$ represents a positive generic constant independent of $\pi$, which may depend on $d$ and change from line to line.
\end{theorem}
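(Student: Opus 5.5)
We follow the error analysis of the DBDP scheme in \cite{HCPHWX20} and Zhang's $L^2$-regularity argument \cite{ZJ04}, comparing the true solution with the idealized conditional-expectation scheme $(\mathcal{U}_i^m,\mathcal{V}_i^m)$ of \eqref{Deep-NN-E}, and then the latter with the networks $(\mathcal{Y}_i,\mathcal{Z}_i)$.

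\textbf{Step 1: one-step consistency identities.} From the BSDE on $[t_i,t_{i+1}]$ we write
\[
Y_{t_i}=\mathbb{E}_i\Big[Y_{t_{i+1}}+\int_{t_i}^{t_{i+1}}f(s,X_s,Y_s,Z_s)\,ds\Big].
\]
Multiplying the backward equation by $\Delta W_i^\top$ and applying It\^o isometry gives
\[
h\bar Z_{t_i}=\mathbb{E}_i\big[Y_{t_{i+1}}\Delta W_i^\top\big]-\mathbb{E}_i\Big[\int_{t_i}^{t_{i+1}}f\,ds\,\Delta W_i^\top\Big].
\]
We then subtract \eqref{Deep-NN-E}. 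Set $\delta Y_i:=Y_{t_i}-\mathcal{U}_i^m$ and $\delta Y_{i+1}:=Y_{t_{i+1}}-\mathcal{Y}_{i+1}$ (with $\mathcal{Y}_N=g(X_N^{\pi,m})$). The $Z$-part becomes
\[
h(\bar Z_{t_i}-\mathcal{V}_i^m)=\mathbb{E}_i\big[(\delta Y_{i+1}-\mathbb{E}_i\delta Y_{i+1})\Delta W_i^\top\big]-\mathbb{E}_i\Big[\int f\,ds\,\Delta W_i^\top\Big].
\]
Cauchy--Schwarz then yields the key estimate
\[
h\,\mathbb{E}|\bar Z_{t_i}-\mathcal{V}_i^m|^2\le d\big(\mathbb{E}|\delta Y_{i+1}|^2-\mathbb{E}|\mathbb{E}_i\delta Y_{i+1}|^2\big)+Ch\,\mathbb{E}\int_{t_i}^{t_{i+1}}|f|^2ds.
\]
For the $Y$-part, Young's inequality with weight $(1+\gamma h)$, the Lipschitz property (iii), and $hL<1$ give
\[
\mathbb{E}|\delta Y_i|^2\le(1+Ch)\,\mathbb{E}|\mathbb{E}_i\delta Y_{i+1}|^2+\tfrac{h}{4d}\mathbb{E}|\bar Z_{t_i}-\mathcal{V}_i^m|^2+C\,\mathbb{E}\!\int_{t_i}^{t_{i+1}}\!\big(|Y_s-Y_{t_i}|^2+|Z_s-\bar Z_{t_i}|^2+|X_s-X_i^\pi|^2+|s-t_i|\big)ds.
\]
Adding a suitable multiple of the $Z$-estimate, the term $\mathbb{E}|\mathbb{E}_i\delta Y_{i+1}|^2$ cancels, which is the standard telescoping trick.

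\textbf{Step 2: network versus idealized targets.} We bound $\mathbb{E}|\mathcal{U}_i^m-\mathcal{Y}_i|^2$ and $\mathbb{E}|\mathcal{V}_i^m-\mathcal{Z}_i|^2$. By the triangle inequality we insert the Monte Carlo quantities $\widetilde{\mathcal{U}}_i^m,\widetilde{\mathcal{V}}_i^m$ of \eqref{Deep-NN-E--YZK}. Conditionally on $X_i^{\pi,m}$ the inner samples are i.i.d., so
\[
\mathbb{E}|\widetilde{\mathcal{V}}_i^m-\mathcal{V}_i^m|^2\le \frac{C}{K_ih}\,\mathbb{E}|\mathcal{Y}_{i+1}-\mathbb{E}_i\mathcal{Y}_{i+1}|^2.
\]
The right-hand side is $O(1/K)$ provided the network outputs have conditional variance $O(h)$, which follows from the $L^2$-bounds propagated by the scheme together with linear growth. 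Similarly $\mathbb{E}|\widetilde{\mathcal{U}}_i^m-\mathcal{U}_i^m|^2\le C/K$ after absorbing the implicit $f$-term using $hL<1$. The population minimizer of \eqref{POP-LOSS-Z} satisfies
\[
\mathbb{E}|\widetilde{\mathcal V}_i^m-\mathcal Z_i|^2\le \widetilde\varepsilon_i^z\le 2\varepsilon_i^z+2\,\mathbb{E}|\widetilde{\mathcal V}_i^m-\mathcal V_i^m|^2.
\]
For \eqref{POP-LOSS-Y}, the loss is a strongly convex perturbation, since $|h\,\partial_y f|\le hL<1$. Its minimizer therefore satisfies
\[
(1-hL)^2\,\mathbb{E}|\widetilde{\mathcal U}_i^m-\mathcal Y_i|^2\le C\big(\varepsilon_i^y+h^2\,\mathbb{E}|\mathcal Z_i-\widetilde{\mathcal V}_i^m|^2+1/K\big).
\]

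\textbf{Step 3: combine and run discrete Gronwall.} We write
\[
\mathbb{E}|\delta Y_i+\mathcal{U}_i^m-\mathcal{Y}_i|^2\le(1+h)\,\mathbb{E}|\delta Y_i|^2+(1+1/h)\,\mathbb{E}|\mathcal{U}_i^m-\mathcal{Y}_i|^2.
\]
The factor $1/h=N/T$ is exactly what produces the terms $N\varepsilon_i^y$ and $N/K$ in the statement. The $Z$ error over $[t_i,t_{i+1}]$ is split as $Z_s-\bar Z_{t_i}$, plus $\bar Z_{t_i}-\mathcal V_i^m$, plus $\mathcal V_i^m-\mathcal Z_i$; the last piece is multiplied by $h$ and contributes $\varepsilon_i^z+1/K$. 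We iterate backward from $i=N$, where $\mathbb{E}|\delta Y_N|^2=\mathbb{E}|g(X_T)-g(X_N^{\pi,m})|^2$, and apply discrete Gronwall. We then sum the telescoped $Z$-estimates. Standard path regularity gives
\[
\mathbb{E}|Y_s-Y_{t_i}|^2+\mathbb{E}|X_s-X_i^\pi|^2\le Ch,
\]
and $\mathbb{E}\int|f|^2ds\le C$, which together produce the term $Ch$. Summing the $h$-weighted $Z$ errors yields the stated bound.

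\textbf{Main obstacle.} The hardest step is the Monte Carlo term in the $Z$-component. The $1/h$ in $\Delta W/h$ gives a variance of order $1/(K h)$ unless the conditional variance of $\mathcal{Y}_{i+1}$ is $O(h)$. For a general network, that requires a Lipschitz or growth control of $\mathcal{NN}_{y,i+1}$. We would first try to obtain it through $\mathbb{E}|\mathcal{Y}_{i+1}-Y_{t_{i+1}}|^2$ plus the regularity of $Y$. Failing that, we would simply accept a cruder constant; because this contribution enters multiplied by $h$ and is then summed, it still yields an $N/K$-type term as stated.
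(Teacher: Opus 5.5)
Your argument follows the paper's route. You compare $Y_{t_i}$ with the idealized targets $(\mathcal U_i^m,\mathcal V_i^m)$ via Zhang's $L^2$ estimate for $\bar Z_{t_i}-\mathcal V_i^m$ and the telescoping of $\mathbb{E}|\delta Y_{i+1}|^2-\mathbb{E}|\mathbb{E}_i\delta Y_{i+1}|^2$. You split $\mathcal U_i^m-\mathcal Y_i$ through $\widetilde{\mathcal U}_i^m$ into a Monte Carlo part and a regression part, and the Young factor $1+1/h$ produces $N\varepsilon_i^y$ and $N/K$ before the discrete Gronwall step.

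The one real variation is in the regression step, where you use that $\mathcal Z_i$ is trained first and then frozen. You read off $\mathbb{E}|\widetilde{\mathcal V}_i^m-\mathcal Z_i|^2=\widetilde\varepsilon_i^z$ directly from the $Z$-loss and treat the $Y$-loss as a perturbation. The paper instead uses two-sided bounds on $F_{y,i}+hF_{z,i}^{*}$, a device inherited from the joint training in DBDP, which is why its $Z$-fit bound carries an unnecessary $\frac1h\widetilde\varepsilon_i^y$. Your version is legitimate and shorter, but it is not a strong-convexity argument: the loss is not convex in $\theta$, and $f$ need not be differentiable. The correct justification is the triangle inequality in $L^2$. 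With $\|\cdot\|:=(\mathbb{E}|\cdot|^2)^{1/2}$, $a:=\widetilde{\mathcal U}_i^m-\mathcal{NN}_{y,i}$ and $b$ the $hf$-difference, you have $|b|\le hL(|a|+|\mathcal Z_i-\widetilde{\mathcal V}_i^m|)$. Hence $(1-hL)\|a\|-hL\|\mathcal Z_i-\widetilde{\mathcal V}_i^m\|\le\|a+b\|\le(1+hL)\|a\|+hL\|\mathcal Z_i-\widetilde{\mathcal V}_i^m\|$, and comparing the minimizer with an arbitrary network gives your bound.

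The Monte Carlo inequality in Step 2 is false, and the ``main obstacle'' is misdiagnosed. The conditional variance of $\mathcal Y_{i+1}^{m,k}(\Delta W_i^{m,k})^\top/h$ equals $d\,|\mathbb{E}_i\mathcal Y_{i+1}|^2/h$ up to terms controlled by the conditional variance of $\mathcal Y_{i+1}$. That first term survives any regularity of $\mathcal{NN}_{y,i+1}$. For instance, if $\mathcal{NN}_{y,i+1}\equiv c\neq0$, then $\mathcal V_i^m=0$ and $\mathcal Y_{i+1}-\mathbb{E}_i\mathcal Y_{i+1}=0$, yet $\mathbb{E}|\widetilde{\mathcal V}_i^m|^2=|c|^2d/(K_ih)$. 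Only a control variate (e.g.\ centering the inner samples) combined with the regularity you mention would give $O(1/K_i)$, and Algorithm 1 uses none.

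What does hold is $\mathbb{E}|\widetilde{\mathcal V}_i^m-\mathcal V_i^m|^2\le C\,\mathbb{E}|\mathcal Y_{i+1}|^2/(K_ih)$. This is the paper's $C/(K_ih)$ once $\sup_i\mathbb{E}|\mathcal Y_{i+1}|^2\le C$ is granted. The paper simply asserts that bound; it can be obtained by backward induction from the error recursion when $K\gtrsim N^2$.

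So your fallback is the actual argument, and it suffices because the $Z$-label error only ever enters multiplied by $h$:
\begin{itemize}
\item via $hL|\widetilde{\mathcal V}_i^m-\mathcal V_i^m|$ in $\widetilde{\mathcal U}_i^m-\mathcal U_i^m$;
\item via $h^2\widetilde\varepsilon_i^z$ in your $Y$-regression bound;
\item via $h\,\mathbb{E}|\mathcal V_i^m-\mathcal Z_i|^2$ in the $Z$-error.
\end{itemize}
Each costs at most $O(1/K)$ per step, well within the $N/K$ budget of the statement.
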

\begin{proof}
Step 1.
On the uniform discrete mesh $\pi$, we can write the BSDE  in (\ref{dFBSDE})
at the mesh points $t_i$ as follows
\begin{equation}
Y_{t_i}= Y_{t_{i+1}} + \int_{t_{i}}^{t_{i+1}} f_sds - \int_{t_{i}%
}^{t_{i+1}} Z_{s} d W_{s}, \label{FBSDE-t-i}%
\end{equation}
where $f_s = f(s,X_{s},Y_{s},Z_{s})$. From (\ref{Deep-NN-E}) and (\ref{FBSDE-t-i}), we have, for $i\in\{0,1,\ldots,N-1\}$
\begin{equation}
\begin{aligned}
Y_{t_i}-\mathcal{U}^m_{i}=&\mathbb{E}_{i}\left[ Y_{t_{i+1}}-\mathcal{NN}_{y,i+1,1+d, n}^{\varrho, L, \mathbf{n}}(p_{i+1}^{m},\theta^{*}_{y,i+1}) \right]\\
&+ \mathbb{E}_{i}\left[\int_{t_{i}}^{t_{i+1}} \left(f_s -f\left(t_i, X_{i}^{\pi,m},\mathcal{U}^m_{i}, \mathcal{V}_{i}^m \right)\right)ds \right].
\label{Deep-NN-E-1}
\end{aligned}
\end{equation}
Following a similar proof strategy for the convergence of the Euler method in \cite{BBTN04,ZJ04}, we have
\begin{equation}
\begin{aligned}
\mathbb{E}\left[\left|Y_{t_i}-\mathcal{U}^m_{i}\right|^2\right]
\le& (1+C h)\mathbb{E}\left[\left| Y_{t_{i+1}}-\mathcal{NN}_{y,i+1,1+d, n}^{\varrho, L, \mathbf{n}}(p_{i+1}^m,\theta^{*}_{y,i+1})\right|^2\right] + Ch^2\\
&+C\left(\mathbb{E}\left[\int_{t_{i}}^{t_{i+1}} |Z_{s}-\bar{Z}_{t_i}|^2ds \right] +h\mathbb{E}\left[\int_{t_{i}}^{t_{i+1}}f_s^2ds\right]\right).
\label{Deep-NN-E-5}
\end{aligned}
\end{equation}
By Young inequality in the form: $(a+b)^2\ge (1-h)a^2 + (1-\frac{1}{h})b^2\ge (1-h)a^2 -\frac{1}{ h}b^2,$ for $a,b \in \mathbb{R}$, we obtain
\begin{equation}
\begin{aligned}
\mathbb{E}\left[\left|Y_{t_i}-\mathcal{U}^m_{i}\right|^2\right]
\ge& (1- h)\mathbb{E}\left[\left| Y_{t_{i}}-\mathcal{Y}_{i}\right|^2\right]
-\frac{1}{ h}\mathbb{E}\left[\left|\mathcal{Y}_{i}-\mathcal{U}^m_{i}\right|^2\right].
\label{Deep-NN-E-6}
\end{aligned}
\end{equation}
Inserting  (\ref{Deep-NN-E-6}) into (\ref{Deep-NN-E-5}), we have
\begin{equation}
\begin{aligned}
\mathbb{E}\left[\left|Y_{t_i}-\mathcal{Y}_{i}\right|^2\right]
&\le (1+C h)\mathbb{E}\left[\left| Y_{t_{i+1}}-\mathcal{Y}_{i+1}\right|^2\right]
+ Ch^2\\
&\quad+C\left(\mathbb{E}\left[\int_{t_{i}}^{t_{i+1}} |Z_{s}-\bar{Z}_{t_i}|^2ds \right]
+h\mathbb{E}\left[\int_{t_{i}}^{t_{i+1}}f_s^2ds\right]
+\frac{1}{ h}\mathbb{E}\left[\left|\mathcal{Y}_{i}-\mathcal{U}^m_{i}\right|^2\right]\right).
\label{Deep-NN-E-7}
\end{aligned}
\end{equation}
With the help of the discrete Gronwall's lemma, we derive
\begin{equation}
\begin{aligned}
\max\limits_{0\le i\le N-1}\mathbb{E}\left[\left|Y_{t_i}-\mathcal{Y}_{i}\right|^2\right]
&\le C\mathbb{E}\left[\left| g(X_T)-g(X_N^{\pi,m})\right|^2\right]
+ Ch\\
&\quad+C\left(\mathbb{E}\left[\sum\limits_{i=0}^{N-1}\int_{t_{i}}^{t_{i+1}} |Z_{s}-\bar{Z}_{t_i}|^2ds \right]
+\frac{1}{ h}\sum\limits_{i=0}^{N-1}\mathbb{E}\left[\left|\mathcal{Y}_{i}-\mathcal{U}^m_{i}\right|^2\right]\right).
\label{Deep-NN-E-8}
\end{aligned}
\end{equation}

Step 2.  By Young inequality, we get
\begin{equation}
\begin{aligned}
\mathbb{E}\left[\left|\mathcal{Y}_{i}-\mathcal{U}^m_{i}\right|^2\right]
\le&  2\mathbb{E}\left[\left|\mathcal{Y}_{i}-\widetilde{\mathcal{U}}_{i}^m\right|^2\right]
+2\mathbb{E}\left[\left|\widetilde{\mathcal{U}}_{i}^m
-\mathcal{U}^m_{i} \right|^2\right].
\label{Deep-NN-E-9}
\end{aligned}
\end{equation}

In what follows, we handle each term in (\ref{Deep-NN-E-9}) separately.

$\blacktriangleright$ Term $\mathbb{E}\left[\left|\mathcal{Y}_{i}-\widetilde{\mathcal{U}}_{i}^m\right|^2\right]$. 
From (\ref{NS-MC-3}) and (\ref{Deep-NN-E--YZK}), Young inequality in the form
\[
|a+b|^2\le (1+\gamma h)|a|^2+\left(1+\frac{1}{\gamma h}\right)|b|^2,
\]
for \(a,b\) in a Euclidean space and \(\gamma>0\), together with the Lipschitz condition with respect to \(f\), we deduce
\begin{equation}
\begin{aligned}
F_{z,i}^{r_j}(\theta_{z,i}^{r_j})
= & \mathbb{E} \left[\left| \widetilde{\mathcal{V}}_{i}^m
 - \mathcal{NN}_{z,i,1+d, n\times d}^{\varrho, L, \mathbf{n}}(p_i^m,\theta_{z,i}^{r_j})
\right|^2\right].
\end{aligned}
\label{NS-MC-3-Z}%
\end{equation}
Moreover, by the optimality of \(\theta_{z,i}^{*}\), we have, for any \(\theta_{z,i}^{r_j}\),
\begin{equation}
\begin{aligned}
F_{z,i}^{*}(\theta_{z,i}^{*})
:= & \mathbb{E} \left[\left| \widetilde{\mathcal{V}}_{i}^m
 - \mathcal{NN}_{z,i,1+d, n\times d}^{\varrho, L, \mathbf{n}}(p_i^m,\theta_{z,i}^{*})
\right|^2\right]  \\
\leq &\ F_{z,i}^{r_j}(\theta_{z,i}^{r_j}).
\end{aligned}
\label{NS-MC-3-Zstar}%
\end{equation}
Furthermore,
\begin{equation}
\begin{aligned}
F_{y,i}^{r_j}(\theta_{y,i}^{r_j})
= & \mathbb{E} \Bigg[\bigg| \widetilde{\mathcal{U}}_{i}^m
+h\Big( f(t_{i},X_{i}^{\pi,m},
\mathcal{NN}_{y,i,1+d, n}^{\varrho, L, \mathbf{n}}(p_{i}^{m},\theta_{y,i}^{r_j}),
\mathcal{NN}_{z,i,1+d, n\times d}^{\varrho, L, \mathbf{n}}(p_{i}^{m},\theta_{z,i}^*))\\
&\quad - f(t_i, X_{i}^{\pi,m},\widetilde{\mathcal{U}}_{i}^m,
\widetilde{\mathcal{V}}_{i}^m)\Big)
- \mathcal{NN}_{y,i,1+d, n}^{\varrho, L, \mathbf{n}}(p_i^m,\theta_{y,i}^{r_j})
\bigg|^2\Bigg]\\
 \le & (1+Ch)\mathbb{E} \left[\left| \widetilde{\mathcal{U}}_{i}^m
- \mathcal{NN}_{y,i,1+d, n}^{\varrho, L, \mathbf{n}}(p_i^m,\theta_{y,i}^{r_j})\right|^2\right]\\
&+Ch \mathbb{E}\left[ \left| \widetilde{\mathcal{V}}_{i}^m
- \mathcal{NN}_{z,i,1+d, n\times d}^{\varrho, L, \mathbf{n}}(p_{i}^{m},\theta_{z,i}^*)\right|^2\right].
\end{aligned}
\label{NS-MC-3--Y1}%
\end{equation}
Adding \(hF_{z,i}^{*}(\theta_{z,i}^{*})\) to both sides of (\ref{NS-MC-3--Y1}) and using (\ref{NS-MC-3-Zstar}), we have
\begin{equation}
\begin{aligned}
F_{y,i}^{r_j}(\theta_{y,i}^{r_j})+hF_{z,i}^{*}(\theta_{z,i}^{*})
\le  & (1+Ch)\mathbb{E} \left[\left| \widetilde{\mathcal{U}}_{i}^m
- \mathcal{NN}_{y,i,1+d, n}^{\varrho, L, \mathbf{n}}(p_i^m,\theta_{y,i}^{r_j})\right|^2\right]\\
&+Ch\mathbb{E} \left[\left| \widetilde{\mathcal{V}}_{i}^m
 - \mathcal{NN}_{z,i,1+d, n\times d}^{\varrho, L, \mathbf{n}}(p_i^m,\theta_{z,i}^{r_j})
\right|^2\right].
\end{aligned}
\label{NS-MC-3-YZ}%
\end{equation}
On the other hand, by Young inequality in the form
\[
|a+b|^2\ge (1-\gamma h)|a|^2-\frac{1}{\gamma h}|b|^2,
\]
for \(a,b\) in a Euclidean space, \(\gamma>0\), and \(0<\gamma h<1\), we obtain
\begin{equation}
\begin{aligned}
F_{y,i}^{r_j}(\theta_{y,i}^{r_j})
 \ge & \left(1-\gamma h-\frac{2L^2h}{\gamma}\right)
 \mathbb{E} \left[\left| \widetilde{\mathcal{U}}_{i}^m
- \mathcal{NN}_{y,i,1+d, n}^{\varrho, L, \mathbf{n}}(p_i^m,\theta_{y,i}^{r_j})\right|^2\right]\\
&- \frac{2L^2h}{\gamma}
\mathbb{E} \left[
\left| \widetilde{\mathcal{V}}_{i}^m
- \mathcal{NN}_{z,i,1+d, n\times d}^{\varrho, L, \mathbf{n}}(p_{i}^{m},\theta_{z,i}^*)\right|^2\right].
\end{aligned}
\label{NS-MC-3--Y2}%
\end{equation}
Adding \(hF_{z,i}^{*}(\theta_{z,i}^{*})\) to both sides of (\ref{NS-MC-3--Y2}) gives
\begin{equation}
\begin{aligned}
&F_{y,i}^{r_j}(\theta_{y,i}^{r_j})+hF_{z,i}^{*}(\theta_{z,i}^{*})\\
\ge & \left(1-\gamma h-\frac{2L^2h}{\gamma}\right)
 \mathbb{E} \left[\left| \widetilde{\mathcal{U}}_{i}^m
- \mathcal{NN}_{y,i,1+d, n}^{\varrho, L, \mathbf{n}}(p_i^m,\theta_{y,i}^{r_j})\right|^2\right]\\
&+ h\left(1-\frac{2L^2}{\gamma}\right)
\mathbb{E} \left[
\left| \widetilde{\mathcal{V}}_{i}^m
- \mathcal{NN}_{z,i,1+d, n\times d}^{\varrho, L, \mathbf{n}}(p_{i}^{m},\theta_{z,i}^{*})\right|^2\right].
\end{aligned}
\label{NS-MC-3--Y2-add}%
\end{equation}
Choosing \(\gamma = 4L^2\), we obtain, for \(h\) small enough,
\begin{equation}
\begin{aligned}
F_{y,i}^{r_j}(\theta_{y,i}^{r_j})+hF_{z,i}^{*}(\theta_{z,i}^{*})
 \ge & (1-C h)\mathbb{E} \left[\left| \widetilde{\mathcal{U}}_{i}^m
- \mathcal{NN}_{y,i,1+d, n}^{\varrho, L, \mathbf{n}}(p_i^m,\theta_{y,i}^{r_j})\right|^2\right]\\
&+ \frac{h}{2}
\mathbb{E} \left[
\left| \widetilde{\mathcal{V}}_{i}^m
- \mathcal{NN}_{z,i,1+d, n\times d}^{\varrho, L, \mathbf{n}}(p_{i}^{m},\theta_{z,i}^{*})\right|^2\right].
\end{aligned}
\label{NS-MC-3--Y3}%
\end{equation}
Thus, by the optimality of \(\theta_{y,i}^{*}\), together with (\ref{NS-MC-3-YZ}), we have
\begin{equation}
\begin{aligned}
 &(1-C h)\mathbb{E} \left[\left| \widetilde{\mathcal{U}}_{i}^m
- \mathcal{NN}_{y,i,1+d, n}^{\varrho, L, \mathbf{n}}(p_i^m,\theta_{y,i}^{*})\right|^2\right]\\
&+ \frac{h}{2}
\mathbb{E}\left[
\left| \widetilde{\mathcal{V}}_{i}^m
- \mathcal{NN}_{z,i,1+d, n\times d}^{\varrho, L, \mathbf{n}}(p_{i}^{m},\theta_{z,i}^{*})\right|^2\right]\\
&\le F_{y,i}^{*}(\theta_{y,i}^{*})+hF_{z,i}^{*}(\theta_{z,i}^{*})\\
&\le  (1+Ch)\mathbb{E} \left[\left| \widetilde{\mathcal{U}}_{i}^m
- \mathcal{NN}_{y,i,1+d, n}^{\varrho, L, \mathbf{n}}(p_i^m,\theta_{y,i}^{r_j})\right|^2\right]\\
&\quad+Ch\mathbb{E}\left[ \left| \widetilde{\mathcal{V}}_{i}^m
 - \mathcal{NN}_{z,i,1+d, n\times d}^{\varrho, L, \mathbf{n}}(p_i^m,\theta_{z,i}^{r_j})
\right|^2\right].
\end{aligned}
\label{NS-MC-3--Y4}%
\end{equation}
Taking the infimum over the neural network classes and using (\ref{Deep-NN-E--YZK-2}), we have, for \(h\) small enough,
\begin{equation}
\begin{aligned}
 &\mathbb{E} \left[\left| \widetilde{\mathcal{U}}_{i}^m
- \mathcal{NN}_{y,i,1+d, n}^{\varrho, L, \mathbf{n}}(p_i^m,\theta_{y,i}^{*})\right|^2\right]
+ h
\mathbb{E}\left[
\left| \widetilde{\mathcal{V}}_{i}^m
- \mathcal{NN}_{z,i,1+d, n\times d}^{\varrho, L, \mathbf{n}}(p_{i}^{m},\theta_{z,i}^{*})\right|^2\right]\\
&\le  C\left(\widetilde{\varepsilon}_{i}^{y} +
h\widetilde{\varepsilon}_{i}^{z} \right).
\end{aligned}
\label{NS-MC-3--Y7}%
\end{equation}

$\blacktriangleright$ Term $\mathbb{E}\left[\left|\widetilde{\mathcal{U}}_{i}^m
-\mathcal{U}^m_{i} \right|^2\right]$. From  (\ref{Deep-NN-E}) and (\ref{Deep-NN-E--YZK}), we obtain
\begin{equation}
\begin{aligned}
\mathbb{E}\left[\left|\widetilde{\mathcal{V}}_{i}^m -\mathcal{V}_{i}^m \right|^2\right]
\le \frac{C}{K_i h}\le \frac{C}{Kh},
\end{aligned}
\label{Deep-NN-E--YZK--h1-z}
\end{equation}
where \(K=\min_{0\leq j\leq N-1}K_j\). The above inequality follows from the
Monte Carlo approximation error together with the fact that the random variable
\[
\mathcal{NN}_{y,i+1,1+d,n}^{\varrho,L,\mathbf{n}}(p_{i+1}^{m,k},\theta_{y,i+1}^{*})
\frac{(\Delta W_i^{m,k})^\top}{h}
\]
has a second moment of order \(1/h\).

From  (\ref{Deep-NN-E}), (\ref{Deep-NN-E--YZK}) and (\ref{Deep-NN-E--YZK--h1-z}),
Young inequality in the form
\[
|a+b|^2\leq (1+\gamma h)|a|^2+
\left(1+\frac{1}{\gamma h}\right)|b|^2,
\]
for \(a,b\) in a Euclidean space and \(\gamma>0\), together with the Lipschitz
condition with respect to \(f\), we obtain
\begin{equation}
\begin{aligned}
\mathbb{E}\left[\left|\widetilde{\mathcal{U}}_{i}^m -\mathcal{U}_{i}^m \right|^2\right]
& \le (1+Ch)\mathbb{E}\Bigg[\bigg|
\frac{1}{K_i}\sum\limits_{k=1}^{K_i}
\mathcal{NN}_{y,i+1,1+d, n}^{\varrho, L, \mathbf{n}}
(p_{i+1}^{m,k},\theta^{*}_{y,i+1})\\
&\quad- \mathbb{E}_i\Big[
\mathcal{NN}_{y,i+1,1+d, n}^{\varrho, L, \mathbf{n}}
(p_{i+1}^{m},\theta^{*}_{y,i+1}) \Big]\bigg|^2\Bigg]\\
&\quad+Ch\mathbb{E}\left[
\left|\widetilde{\mathcal{V}}_{i}^m-  \mathcal{V}_{i}^m\right|^2  \right]\\
&\le \frac{C}{K_i}\le \frac{C}{K}.
\end{aligned}
\label{Deep-NN-E--YZK--h1}
\end{equation}
Plugging  (\ref{NS-MC-3--Y7}) and  (\ref{Deep-NN-E--YZK--h1}) into
(\ref{Deep-NN-E-9}), we derive
\begin{equation}
\begin{aligned}
\mathbb{E}\left[\left|\mathcal{Y}_{i}-\mathcal{U}_{i}^m\right|^2\right]
\le&    C\left( \widetilde{\varepsilon}_{i}^{y} +
h\widetilde{\varepsilon}_{i}^{z} +\frac{1}{K} \right).
\end{aligned}
\label{Deep-NN-E-10}
\end{equation}
From the definitions of \(\widetilde{\varepsilon}_{i}^{y}\),
\(\varepsilon_{i}^{y}\) and Young inequality, we have
\begin{equation}
\begin{aligned}
\widetilde{\varepsilon}_{i}^{y}
&=\inf\limits_{\mathcal{NN}_{y,i,1+d, n}^{\varrho, L, \mathbf{n}}}
\mathbb{E}\left[\left|\widetilde{\mathcal{U}}_{i}^m
-\widetilde{u}(t_i,X^{\pi,m}_i)
+\widetilde{u}(t_i,X^{\pi,m}_i)
-\mathcal{NN}_{y,i,1+d, n}^{\varrho, L, \mathbf{n}}
(p_{i}^{m},\theta^{r_j}_{y,i})\right|^2\right]\\
&\le 2\mathbb{E}\left[\left|\mathcal{U}_i^{m} - \widetilde{\mathcal{U}}_{i}^m\right|^2\right]
+2\varepsilon_{i}^{y}.
\end{aligned}
\label{Deep-NN-E--YZK-21-h}
\end{equation}
By (\ref{Deep-NN-E--YZK--h1}), we restate (\ref{Deep-NN-E--YZK-21-h}) as
\begin{equation}
\widetilde{\varepsilon}_{i}^{y}
\le C\left(\frac{1}{K_i}
+\varepsilon_{i}^{y}\right)
\le C\left(\frac{1}{K}
+\varepsilon_{i}^{y}\right).
\label{Deep-NN-E--YZK-21-hh}
\end{equation}
Similarly, from the definitions of \(\widetilde{\varepsilon}_{i}^{z}\),
\(\varepsilon_{i}^{z}\), Young inequality and (\ref{Deep-NN-E--YZK--h1-z}), we have
\begin{equation}
\begin{aligned}
\widetilde{\varepsilon}_{i}^{z}
&=\inf\limits_{\mathcal{NN}_{z,i,1+d, n\times d}^{\varrho, L, \mathbf{n}}}
\mathbb{E}\left[\left|\widetilde{\mathcal{V}}_{i}^m
-\widetilde{v}(t_i,X^{\pi,m}_i)
+\widetilde{v}(t_i,X^{\pi,m}_i)
-\mathcal{NN}_{z,i,1+d, n\times d}^{\varrho, L, \mathbf{n}}
(p_i^m,\theta_{z,i}^{r_j})\right|^2\right]\\
&\le 2\mathbb{E}\left[\left|\mathcal{V}_{i}^m-\widetilde{\mathcal{V}}_{i}^m\right|^2\right]
+2\varepsilon_i^z\\
&\le  C\left(\frac{1}{Kh}
+\varepsilon_{i}^{z}\right).
\end{aligned}
\label{Deep-NN-E--YZK-21-hhz}
\end{equation}
Plugging (\ref{Deep-NN-E--YZK-21-hh}) and
(\ref{Deep-NN-E--YZK-21-hhz}) into (\ref{Deep-NN-E-10}), we deduce
\begin{equation}
\begin{aligned}
\mathbb{E}\left[\left|\mathcal{Y}_{i}-\mathcal{U}_{i}^m\right|^2\right]
\le&    C\left(\varepsilon_{i}^{y} +
h\varepsilon_{i}^{z} +\frac{1}{K} \right).
\end{aligned}
\label{Deep-NN-E-10h}
\end{equation}
Inserting (\ref{Deep-NN-E-10h}) into (\ref{Deep-NN-E-8}), we deduce
\begin{equation}
\begin{aligned}
\max\limits_{0\le i\le N-1}
\mathbb{E}\left[\left|Y_{t_i}-\mathcal{Y}_{i}\right|^2\right]
&\le  C\mathbb{E}\left[\left| g(X_T)-g(X_N^{\pi,m})\right|^2\right] + Ch\\
&\quad+C\mathbb{E}\left[\sum\limits_{i=0}^{N-1}
\int_{t_{i}}^{t_{i+1}} |Z_{s}-\bar{Z}_{t_i}|^2ds \right]\\
&\quad+C\sum\limits_{i=0}^{N-1}
\left[
N\varepsilon_{i}^{y} +
\varepsilon_{i}^{z} +\frac{N}{K}\right].
\end{aligned}
\label{Deep-NN-E-11}
\end{equation}

Step 3. Now, we prove the consistency of the $Z$-component. From the assumptions (ii)-(iii), Cauchy-Schwarz inequality and the tower property of conditional expectations, we have  (see \cite{GELC07,ZJ04})
\begin{equation}
\begin{aligned}
&\mathbb{E}\left[\int_{t_{i}}^{t_{i+1}}
|Z_{s}-\mathcal{V}^m_{i}|^2ds \right]\\
&\le \mathbb{E}\left[\int_{t_{i}}^{t_{i+1}}
|Z_{s}-\bar{Z}_{t_i}|^2ds \right]
+2dh\mathbb{E}\left[\int_{t_{i}}^{t_{i+1}}f_s^2ds\right]\\
&\quad+2d\mathbb{E}\left[
\left|
Y_{t_{i+1}}
-
\mathcal{NN}_{y,i+1,1+d,n}^{\varrho,L,\mathbf{n}}
(p^m_{i+1},\theta^{*}_{y,i+1})
\right|^2
\right]\\
&\quad-2d\mathbb{E}\left[
\left|
\mathbb{E}_{i}\left[
Y_{t_{i+1}}
-
\mathcal{NN}_{y,i+1,1+d,n}^{\varrho,L,\mathbf{n}}
(p^m_{i+1},\theta^{*}_{y,i+1})
\right]
\right|^2
\right]\\
&= \mathbb{E}\left[\int_{t_{i}}^{t_{i+1}}
|Z_{s}-\bar{Z}_{t_i}|^2ds \right]
+2dh\mathbb{E}\left[\int_{t_{i}}^{t_{i+1}}f_s^2ds\right]\\
&\quad+2d\mathbb{E}\left[
\left|
Y_{t_{i+1}}-\mathcal{Y}_{i+1}
\right|^2
\right]
-2d\mathbb{E}\left[
\left|
\mathbb{E}_{i}\left[
Y_{t_{i+1}}-\mathcal{Y}_{i+1}
\right]
\right|^2
\right]\\
&\le \mathbb{E}\left[\int_{t_{i}}^{t_{i+1}}
|Z_{s}-\bar{Z}_{t_i}|^2ds \right]
+2dh\mathbb{E}\left[\int_{t_{i}}^{t_{i+1}}f_s^2ds\right]\\
&\quad+4d\mathbb{E}\left[
\left|
Y_{t_{i+1}}-\mathcal{Y}_{i+1}
\right|^2
\right]
-4d\mathbb{E}\left[
\left|
\mathbb{E}_{i}\left[
Y_{t_{i+1}}-\mathcal{Y}_{i+1}
\right]
\right|^2
\right].
\end{aligned}
\label{Deep-NN-E-12}
\end{equation}
Summing over $i=0,1,\cdots,N-1$, we have
\begin{equation}
\begin{aligned}
&\mathbb{E}\left[\sum\limits_{i=0}^{N-1}\int_{t_{i}}^{t_{i+1}} |Z_{s}-\mathcal{V}^m_{i}|^2ds \right]\\
&\le \mathbb{E}\left[\sum\limits_{i=0}^{N-1}\int_{t_{i}}^{t_{i+1}} |Z_{s}-\bar{Z}_{t_i}|^2ds \right]+Ch+4d\mathbb{E}\left[\left| g(X_T)-g(X_N^{\pi,m})\right|^2\right]\\
&\quad+4d\mathbb{E}\left[\sum\limits_{i=0}^{N-1}\left| Y_{t_{i}}-\mathcal{Y}_{i}\right|^2\right]
-4d\mathbb{E}\left[\sum\limits_{i=0}^{N-1}\left|\mathbb{E}_{i}\left[ Y_{t_{i+1}}-\mathcal{Y}_{i+1}\right]\right|^2\right].
\label{Deep-NN-E-13}
\end{aligned}
\end{equation}
From Young inequality in the form: $(a+b)^2\le (1+\gamma h)a^2 + (1+\frac{1}{\gamma h})b^2,$ for $a,b \in \mathbb{R}$, and some $\gamma>0$ which will be given later, (\ref{Deep-NN-E-6}),
Cauchy-Schwarz inequality, and the Lipschitz condition with respect to $f$,
 we deduce
\begin{equation}
\begin{aligned}
&4d\mathbb{E}\left[\left| Y_{t_{i}}-\mathcal{Y}_{i}\right|^2\right]
-4d\mathbb{E}\left[\left|\mathbb{E}_{i}\left[ Y_{t_{i+1}}-\mathcal{Y}_{i+1}\right]\right|^2\right]\\
&\le4d\left(\frac{1+\gamma h}{1-h}\left(1+\frac{4L^2h}{\gamma}\right)-1\right)\mathbb{E}\left[\left|\mathbb{E}_{i}\left[ Y_{t_{i+1}}-\mathcal{Y}_{i+1}\right]\right|^2\right]\\
&\quad+\frac{16dL^2}{\gamma}\frac{1+\gamma h}{1-h} \left(Ch^2
+\mathbb{E}\left[\int_{t_{i}}^{t_{i+1}} |Z_{s}-\mathcal{V}^m_{i}|^2ds \right]\right)
+\frac{4d}{(1-h) h} \mathbb{E}\left[\left|\mathcal{Y}_{i}-\mathcal{U}^m_{i}\right|^2\right].
\label{Deep-NN-E-14}
\end{aligned}
\end{equation}
Taking $\gamma = 48dL^2$, we obtain $\frac{16dL^2}{\gamma}\frac{1+\gamma h}{1-h}\le \frac{1}{2}$ for sufficiently small $h$.
Then plugging (\ref{Deep-NN-E-14}) into (\ref{Deep-NN-E-13}), we have
\begin{equation}
\begin{aligned}
&\frac{1}{2}\mathbb{E}\left[\sum\limits_{i=0}^{N-1}\int_{t_{i}}^{t_{i+1}} |Z_{s}-\mathcal{V}^m_{i}|^2ds \right]\\
&\le \mathbb{E}\left[\sum\limits_{i=0}^{N-1}\int_{t_{i}}^{t_{i+1}} |Z_{s}-\bar{Z}_{t_i}|^2ds \right]+Ch+C\max\limits_{0\le i\le N}\mathbb{E}\left[\left| Y_{t_{i}}-\mathcal{Y}_{i}\right|^2\right]
+C N\sum\limits_{i=0}^{N-1}\mathbb{E}\left[\left|\mathcal{Y}_{i}-\mathcal{U}^m_{i}\right|^2\right]\\
&\le C\mathbb{E}\left[\left| g(X_T)-g(X_N^{\pi,m})\right|^2\right] + Ch
+C\mathbb{E}\left[\sum\limits_{i=0}^{N-1}\int_{t_{i}}^{t_{i+1}} |Z_{s}-\bar{Z}_{t_i}|^2ds \right]\\
&\quad+C\sum\limits_{i=0}^{N-1}\mathbb{E}\left[
N\varepsilon_{i}^{y} +
\varepsilon_{i}^{z} +\frac{N}{K}\right],
\label{Deep-NN-E-15}
\end{aligned}
\end{equation}
where we use \eqref{Deep-NN-E-10h} and \eqref{Deep-NN-E-11} in the last inequality.
From Young inequality, we know
\begin{equation}
\begin{aligned}
\mathbb{E}\left[\int_{t_{i}}^{t_{i+1}} |Z_{s}-\mathcal{Z}_{i}|^2ds \right]
\le 2\mathbb{E}\left[\int_{t_{i}}^{t_{i+1}} |Z_{s}-\mathcal{V}^m_{i}|^2ds \right]+2h\mathbb{E}\left[\left| \mathcal{V}^m_{i}-\mathcal{Z}_{i}\right|^2\right].
\label{Deep-NN-E-16}
\end{aligned}
\end{equation}
By Young inequality, we get
\begin{equation}
\begin{aligned}
\mathbb{E}\left[\left|\mathcal{Z}_{i}-\mathcal{V}^m_{i}\right|^2\right]
\le&  2\mathbb{E}\left[\left|\mathcal{Z}_{i}-\widetilde{\mathcal{V}}^m_{i}\right|^2\right]
+2\mathbb{E}\left[\left|\widetilde{\mathcal{V}}^m_{i}-\mathcal{V}^m_{i}\right|^2\right].
\label{Deep-NN-E-9-z}
\end{aligned}
\end{equation}
From (\ref{NS-MC-3--Y7}) and (\ref{Deep-NN-E--YZK--h1-z}),
we rewrite (\ref{Deep-NN-E-9-z}) as
\begin{equation}
\begin{aligned}
\mathbb{E}\left[\left|\mathcal{Z}_{i}-\mathcal{V}^m_{i}\right|^2\right]
\le&  C\left(\frac{1}{h}\widetilde{\varepsilon}_{i}^{y}
+\widetilde{\varepsilon}_{i}^{z}+\frac{1}{K_i h}\right)\\
\le&  C\left(N\widetilde{\varepsilon}_{i}^{y}
+\widetilde{\varepsilon}_{i}^{z}+\frac{N}{K_i}\right),
\end{aligned}
\label{Deep-NN-E-9-z1h}
\end{equation}
where we used \(h=T/N\) and absorbed the constant \(T\) into \(C\).
Inserting  (\ref{Deep-NN-E-9-z1h}) into (\ref{Deep-NN-E-16}), we deduce
\begin{equation}
\begin{aligned}
\mathbb{E}\left[\int_{t_{i}}^{t_{i+1}} |Z_{s}-\mathcal{Z}_{i}|^2ds \right]
\le& 2\mathbb{E}\left[\int_{t_{i}}^{t_{i+1}}
|Z_{s}-\mathcal{V}^m_{i}|^2ds \right]\\
&+C\left(\widetilde{\varepsilon}_{i}^{y}
+h\widetilde{\varepsilon}_{i}^{z}+\frac{1}{K_i}\right).
\end{aligned}
\label{Deep-NN-E-16-1}
\end{equation}
By (\ref{Deep-NN-E--YZK-21-hh}) and  (\ref{Deep-NN-E--YZK-21-hhz}), we rewrite
(\ref{Deep-NN-E-16-1}) as
\begin{equation}
\begin{aligned}
\mathbb{E}\left[\int_{t_{i}}^{t_{i+1}} |Z_{s}-\mathcal{Z}_{i}|^2ds \right]
\le& 2\mathbb{E}\left[\int_{t_{i}}^{t_{i+1}}
|Z_{s}-\mathcal{V}^m_{i}|^2ds \right]\\
&+C\left(\varepsilon_{i}^{y}
+h\varepsilon_{i}^{z}+\frac{1}{K_i}\right).
\end{aligned}
\label{Deep-NN-E-16-1h}
\end{equation}
Summing over \(i=0,1,\cdots,N-1\),  and then inserting
(\ref{Deep-NN-E-15}) into the derived inequality, we obtain
\begin{equation}
\begin{aligned}
&\mathbb{E}\left[\sum\limits_{i=0}^{N-1}
\int_{t_{i}}^{t_{i+1}} |Z_{s}-\mathcal{Z}_{i}|^2ds \right]\\
\le &  C\mathbb{E}\left[\left| g(X_T)-g(X_N^{\pi,m})\right|^2\right] + Ch
+C\mathbb{E}\left[\sum\limits_{i=0}^{N-1}
\int_{t_{i}}^{t_{i+1}} |Z_{s}-\bar{Z}_{t_i}|^2ds \right]\\
&+C\sum\limits_{i=0}^{N-1}
\left[
N\varepsilon_{i}^{y} +
\varepsilon_{i}^{z} +\frac{N}{K}\right].
\end{aligned}
\label{Deep-NN-E-16-1-1}
\end{equation}
Adding (\ref{Deep-NN-E-16-1-1}) to (\ref{Deep-NN-E-11}), we derive the required
error estimate and this ends the proof.
\end{proof}

\begin{remark}
Theorem \ref{ae-DMCE}  offers a structural advantage over Theorem 4.1 in Hure, Pham, and Warin \cite{HCPHWX20} through a finer decomposition of the error terms on the right-hand side of the inequality. While Theorem 4.1 bounds the approximation error by how well a neural network fits inherently noisy pathwise labels (\( \varepsilon_i^{\mathcal{N},v} \) and \( \varepsilon_i^{\mathcal{N},z} \)), Theorem 3.1 explicitly isolates the error into two categories: network fitting residuals ($\varepsilon_{i}^{y}$ and $\varepsilon_{i}^{z}$) and statistical Monte Carlo noise ($\frac{1}{K}$). This separation provides a theoretical explanation for the variance-reduction
mechanism underlying the DBR method. By taking conditional expectations, the stochastic perturbation
$\Delta W_i$ is smoothed out prior to loss evaluation. Consequently, the DBR scheme effectively recasts a variance-prone stochastic projection problem into a deterministic regression task, theoretically predicting smoother optimization landscapes, reduced fitting difficulty, and superior generalization capabilities compared to the DBDP method.
\end{remark}

\subsection{Convergence analysis}

In this section, we investigate the convergence rate of the approximation error for the proposed DBR method utilizing GroupSort deep neural networks  (see \cite{ACLJGR19,GMPHWX22}). To the best of our knowledge, Germain, Pham, and Warin \cite{GMPHWX22} were the first to employ GroupSort deep neural networks to analyze the convergence rates of deep learning methods for PDEs. Building upon their work, we adapt the analytical techniques introduced in Proposition 3.6 of \cite{GMPHWX22} to establish Theorem 3.4. Although the core idea closely parallels theirs, we derive a tailored result specific to our DBR framework. Before proceeding, we briefly review the GroupSort deep neural networks architecture.

The set of the GroupSort neural networks is defined as
$$
\begin{aligned}
 \mathcal{G}_{\mathcal{K}, \mathbf{d}_0, \mathbf{d}}^{\zeta_\kappa, L, \mathbf{n}}:=&\left\{\Psi=\left(\Psi_i\right)_{i=1,2, \ldots, \mathbf{d}}: \mathbb{R}^{\mathbf{d}_0} \mapsto \mathbb{R}^{\mathbf{d}}, \Psi_i (x) = \mathcal{K} \overline{\beta}_i \psi_i\left(\frac{x+\alpha_i}{\overline{\beta}_i}\right),\right. \\
&\left. \quad \psi_i \in \mathcal{S}_{\mathbf{d}_0}^{\zeta_\kappa,L, \mathbf{n}} \text { for some } \alpha_i \in \mathbb{R}^{\mathbf{d}_0}, \overline{\beta}_i>0\right\} .
\end{aligned}
$$
with
$\kappa \in \mathbb{N}^*, \kappa \geq 2$, be a grouping size, dividing the number of neurons $n_\ell=\kappa \tilde{n}_\ell$, at each layer $\ell=0, \ldots, L-1$;
$\sum\limits_{\ell=0}^{L-1} n_\ell$ denotes the width of the network and $L+1$ is the depth; $\zeta_\kappa=\left(\zeta_\kappa^\ell\right)_{\ell=0, \ldots, L-1}$ denotes a specific sequence of activation functions;
each nonlinear function $\zeta_\kappa^\ell$ divides its input into groups of size $\kappa$ and sorts each group in decreasing order; furthermore, by enforcing the parameters of the networks to satisfy, with the Euclidean norm $|\cdot|_2$ and the $L_{\infty}$ norm $|\cdot|_{\infty}$,
$$
\begin{aligned}
\mathcal{S}_{\mathbf{d}_0}^{\zeta_\kappa,L, \mathbf{n}}= & \left\{\varphi\left(\mathcal{W}_0, \beta_0, \ldots, \mathcal{W}_{L}, \beta_{L}\right) \in \mathcal{NN}_{\mathbf{d}_0, 1}^{\zeta_\kappa, L, \mathbf{n}}, \sup _{|x|_2=1}\left|\mathcal{W}_0 x\right|_{\infty} \leq 1, \sup _{|x|_{\infty}=1}\left|\mathcal{W}_i x\right|_{\infty} \leq 1\right. \\
& \left.\quad\left|\beta_j\right|_{\infty} \leq M, i=1, \ldots,L, j=0, \ldots, L\right\}.
\end{aligned}
$$

In what follows, we show the quantitative approximation result which is vital to study the convergence rates of the proposed deep probabilistic numerical method.

\begin{lemma} (see Proposition 2.1 in \cite{GMPHWX22})\label{gropusort-est}
Let $\tilde{f}:[-R,R]^d\rightarrow \mathbb{R}^{d_1}$ be $\mathcal{K}$-Lipschitz. Then, $\forall \varepsilon>0$, there exists a GroupSort neural network $g$ in $\mathcal{G}_{K,d,d_1}^{\zeta_{\kappa},L,\mathbf{n}}$ satisfying
$$
\sup\limits_{x\in[-R,R]^d}|\tilde{f}(x)-g(x)|\le 2\sqrt{d_1}R\mathcal{K}\varepsilon,
$$
with $g$ of grouping size $\kappa  = \lceil\frac{2\sqrt{d}}{\varepsilon}\rceil$, depth $L+1 = O(d^2)$ and width
$\sum\limits_{\ell=0}^{L-1}n_\ell=O((\frac{2\sqrt{d}}{\varepsilon})^{d^2-1})$ in the case $d>1$.
If $d=1$, the same result holds with $g$ of grouping size $\kappa  = \lceil\frac{1}{\varepsilon}\rceil$, depth $L+1 = 3$ and width
$\sum\limits_{\ell=0}^{L-1}n_\ell=O(\frac{1}{\varepsilon})$; the notation $\lceil x\rceil$ denotes the unique integer $n$ satisfying
the constraint $x\le n<x+1$.
\end{lemma}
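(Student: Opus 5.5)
The lemma is cited from Proposition 2.1 of \cite{GMPHWX22}, so the proof I would give reproduces their construction. It has three steps: reduce to scalar 1-Lipschitz functions on the unit cube, approximate each such function by a piecewise-linear "max of mins" interpolant built on a grid, and realize that interpolant exactly with a GroupSort network. The reduction comes first. Write $\tilde f=(\tilde f_1,\dots,\tilde f_{d_1})$. Each component $\tilde f_i$ is $\mathcal K$-Lipschitz. We rescale by setting $\psi_i(y):=\tilde f_i(R\,y)/(R\mathcal K)$ on $[-1,1]^d$, or after an affine shift on $[0,1]^d$; this $\psi_i$ is 1-Lipschitz. If we find $\phi_i\in\mathcal S^{\zeta_\kappa,L,\mathbf n}_{d}$ with $\sup|\psi_i-\phi_i|\le 2\varepsilon$ (on the cube of side $2$, so the bound is $\varepsilon$ times the side length), then $\Psi_i(x)=\mathcal K\bar\beta_i\phi_i((x+\alpha_i)/\bar\beta_i)$, with $\bar\beta_i\sim R$ and $\alpha_i$ the shift, approximates $\tilde f_i$ within $2R\mathcal K\varepsilon$. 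This form is exactly the one allowed in $\mathcal G_{\mathcal K,d,d_1}^{\zeta_\kappa,L,\mathbf n}$. Taking the Euclidean norm over the $d_1$ components gives the factor $\sqrt{d_1}$.

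For the scalar approximation, take a grid of mesh $\varepsilon/\sqrt d$ on the cube; its cell diameter is then $\le\varepsilon$. At each grid point $x_j$ put the cone $c_j(x)=\psi(x_j)-|x-x_j|$, and set $\phi(x)=\max_j\min_{\ell}\,(\text{suitable 1-Lipschitz pieces})$. In practice I would use the standard lattice formula $\phi=\max_j\min_k h_{jk}$, where $h_{jk}$ are affine 1-Lipschitz functions interpolating $\psi$ at the pair $(x_j,x_k)$. This formula is 1-Lipschitz. It agrees with $\psi$ at the grid points, because $\psi$ is 1-Lipschitz. Every point lies within $\varepsilon$ of a grid point, so $\sup|\psi-\phi|\le 2\varepsilon$ follows. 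The $\ell_2$-norm Lipschitz requirement is why the first layer uses the constraint $\sup_{|x|_2=1}|\mathcal W_0x|_\infty\le1$: the linear pieces are rows of $\mathcal W_0$ with unit $\ell_2$ norm.

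Next we realize $\phi$ with GroupSort layers. Sorting a group of size $\kappa$ outputs its max and its min, so a max or min of $\kappa$ numbers costs one layer. Moreover, all weight matrices can be chosen as selection or averaging matrices with $\infty$-operator norm $\le 1$, so the network lies in $\mathcal S$. The number of grid points is $(2\sqrt d/\varepsilon)^d$. Nesting min/max over groups of size $\kappa=\lceil 2\sqrt d/\varepsilon\rceil$ gives depth $O(d^2)$ and width $O(\kappa^{d^2-1})$, as in \cite{GMPHWX22}. In dimension $d=1$ we instead write a 1-Lipschitz piecewise-linear function with $O(1/\varepsilon)$ knots as a single max of mins. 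That is depth $3$ and width $O(1/\varepsilon)$, with $\kappa=\lceil1/\varepsilon\rceil$.

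\textbf{Main obstacle.} The main difficulty is the counting in the last step. We need the max–min representation to be implementable with norm-constrained weights and the stated depth and width: the number of affine pieces must be bounded in terms of $\kappa$, and groups must be padded to size $\kappa$ by duplicating entries. I would follow the bookkeeping of \cite{ACLJGR19,GMPHWX22} verbatim rather than rederive it.
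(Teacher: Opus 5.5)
The paper gives no proof of this lemma; it is imported from Proposition~2.1 of \cite{GMPHWX22}. The approximation part of your sketch is sound, and it is the natural way such an import works:
\begin{itemize}
\item you rescale each component to a $1$-Lipschitz function on a unit cube and absorb the scaling into the form $\mathcal{K}\overline{\beta}_i\psi_i((x+\alpha_i)/\overline{\beta}_i)$;
\item you pick up the factor $\sqrt{d_1}$ from the Euclidean norm over components;
\item you use the all-pairs lattice interpolant $\max_j\min_k h_{jk}$, whose affine pieces have gradients of Euclidean norm at most $1$ (so they fit the constraint on $\mathcal{W}_0$), with later layers taken as selection matrices whose rows have $\ell_1$-norm $1$.
\end{itemize}
This does produce a network in the class with $\sup|\tilde f-g|\le 2\sqrt{d_1}R\mathcal{K}\varepsilon$.

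The gap is the step you yourself flagged as the main obstacle: the architectural half of the statement. You assert it rather than derive it, and the count you write down is not what your construction gives.

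For $d\ge 2$, take $n\asymp\kappa^d$ grid points. The formula $\max_j\min_k h_{jk}$ needs a separate block of $n$ affine functions for every $j$, hence about $n^2\asymp\kappa^{2d}$ first-layer neurons. Reducing blocks of size $\kappa^d$ with groups of size $\kappa$ costs about $d$ sorting layers for the minima and $d$ for the maximum. So your network has depth $O(d)$, which is harmless, but width of order $\kappa^{2d}$. That is $O(\kappa^{d^2-1})$ only when $d\ge 3$; for $d=2$ it is of order $\kappa^4$, not $\kappa^3$.

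For $d=1$, ``a single max of mins'' of $O(1/\varepsilon)$ pieces does not give width $O(1/\varepsilon)$ either. Every group in a layer has the same size $\kappa=\lceil 1/\varepsilon\rceil$, so each inner minimum, even of two lines, occupies a padded group of $\kappa$ neurons. The total is of order $\varepsilon^{-2}$. You cannot fall back on summing many ramps, because the row-$\ell_1$ constraint on hidden weights forbids large total slope variation.

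Your own count gives depth $O(d)$ rather than $O(d^2)$. So whatever bookkeeping produces the exponents $d^2$ and $d^2-1$ in the cited source is not the nested max--min count of your construction, and ``following it verbatim'' cannot simply be grafted on. As written, you prove the error bound with depth $O(d)$ and width $O(\kappa^{2d})$, and with depth $3$ and width $O(\varepsilon^{-2})$ when $d=1$. The stated widths for $d\in\{1,2\}$ rest on the citation alone, exactly as in the paper, unless you supply a construction that actually attains them.
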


\begin{theorem}
\label{Thm-GS-convergence}
Suppose that the assumptions (i)--(iv) hold. Furthermore, assume that
\(X_0\in L^{2+\delta}(\mathcal{F}_0,\mathbb{R}^d)\), for some \(\delta>0\), and that
\(g\) is an \(L_g\)-Lipschitz function. Then, there exists a bounded sequence
\(\{\mathcal{K}_i\}_{i=0}^{N}\), uniformly in \(i\) and \(N\), such that the spatial
slices of the GroupSort neural networks satisfy
\[
x\longmapsto
\mathcal{NN}_{y,i,1+d,n}^{\varrho,L,\mathbf{n}}
((t_i,x),\theta_{y,i}^{*})
\in
G_{\mathcal{K}_i,d,n}^{\zeta_\kappa,L,\mathbf{n}},
\]
and
\[
x\longmapsto
\mathcal{NN}_{z,i,1+d,n\times d}^{\varrho,L,\mathbf{n}}
((t_i,x),\theta_{z,i}^{*})
\in
G_{\sqrt{\frac{d}{h}}\mathcal{K}_i,d,n\times d}^{\zeta_\kappa,L,\mathbf{n}}.
\]
Let
\[
K_{\mathrm{MC}}:=\min_{0\leq i\leq N-1}K_i
\]
denote the minimal number of inner Monte Carlo samples used to approximate the
conditional expectations. If
\[
K_{\mathrm{MC}}\asymp N^3,
\]
then the DBR approximation \((\mathcal{Y}_i,\mathcal{Z}_i)_{0\leq i\leq N-1}\)
satisfies
\[
\max_{0\leq i\leq N}
\mathbb{E}\left[
\left|
Y_{t_i}-\mathcal{Y}_i
\right|^2
\right]
+
\mathbb{E}\left[
\sum_{i=0}^{N-1}
\int_{t_i}^{t_{i+1}}
\left|
Z_s-\mathcal{Z}_i
\right|^2ds
\right]
\leq Ch,
\]
where \(C\) is independent of \(N\), \(h\), and \(K_{\mathrm{MC}}\).

More precisely, the above rate is obtained by choosing the truncation radius
\(R\), the GroupSort approximation parameter \(\eta\), and the inner Monte Carlo
sample size \(K_{\mathrm{MC}}\) as
\[
R=O\left(N^{\frac{3}{\delta}}\right),
\qquad
\eta=O\left(N^{-\frac32-\frac{3}{\delta}}\right),
\qquad
K_{\mathrm{MC}}\asymp N^3.
\]
Consequently, in the case \(d>1\), the GroupSort networks can be chosen with
grouping size
\[
\kappa
=
O\left(
\left\lceil
2\sqrt d\,N^{\frac32+\frac{3}{\delta}}
\right\rceil
\right),
\]
depth
\[
L+1=O(d^2),
\]
and width
\[
\sum_{\ell=0}^{L-1}n_\ell
=
O\left(
\left(
2\sqrt d\,N^{\frac32+\frac{3}{\delta}}
\right)^{d^2-1}
\right).
\]
If \(d=1\), one can take
\[
\kappa
=
O\left(
N^{\frac32+\frac{3}{\delta}}
\right),
\qquad
L+1=3,
\qquad
\sum_{\ell=0}^{L-1}n_\ell
=
O\left(
N^{\frac32+\frac{3}{\delta}}
\right).
\]
\end{theorem}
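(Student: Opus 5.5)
The plan is to start from the error estimate of Theorem~\ref{ae-DMCE} and show that every term on its right-hand side is $O(h)$ under the stated choices of $R$, $\eta$ and $K_{\mathrm{MC}}$. The standard terms are handled first.

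\emph{Standard terms.} Assumptions (i)--(iv), Lipschitz continuity of $g$, and the strong Euler estimate $\mathbb{E}|X_T-X_N^{\pi}|^2\le Ch$ give
\[
\mathbb{E}|g(X_T)-g(X_N^{\pi,m})|^2\le L_g^2Ch .
\]
Zhang's $L^2$-regularity result \cite{ZJ04} gives
\[
\mathbb{E}\sum_i\int_{t_i}^{t_{i+1}}|Z_s-\bar Z_{t_i}|^2ds\le Ch .
\]
The Monte Carlo term is $\sum_{i}N/K\asymp N^2/K_{\mathrm{MC}}$, which is $O(N^{-1})=O(h)$ once $K_{\mathrm{MC}}\asymp N^3$.

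\emph{Network approximation terms.} It remains to show
\[
\sum_i\bigl(N\varepsilon_i^y+\varepsilon_i^z\bigr)\le Ch .
\]
For this it suffices to have $\varepsilon_i^y\le Ch^3$ and $\varepsilon_i^z\le Ch^2$.

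\emph{Step 1: Lipschitz regularity of the targets, by backward induction.} Suppose $x\mapsto\mathcal{NN}_{y,i+1}((t_{i+1},x),\theta^*_{y,i+1})$ is $\mathcal K_{i+1}$-Lipschitz. The Euler transition is Lipschitz in $x_i$ with constant $1+Ch$ in mean square. A Zhang-type argument, writing $\widetilde v$ via $\mathbb{E}_i[(\phi(X_{i+1})-\phi(\cdot))\Delta W_i^\top/h]$ and using Cauchy--Schwarz, shows that $\widetilde v(t_i,\cdot)$ is Lipschitz with constant $\sqrt{d/h}\,\mathcal K_{i+1}(1+Ch)$. The implicit fixed point then makes $\widetilde u(t_i,\cdot)$ Lipschitz with constant $\mathcal K_{i+1}(1+Ch)+Ch(1+\sqrt{d/h}\,\mathcal K_{i+1})$.

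This last bound is the main obstacle: the $z$-Lipschitz constant of order $h^{-1/2}$ feeds a term of size $h^{1/2}$ into the $y$-recursion, and a naive recursion would give $\mathcal K_i\sim e^{C\sqrt N}$. I would first try to avoid this by exploiting that $\widetilde u(t_i,x)=\mathbb{E}[\phi(X_{i+1}^{t_i,x})]+hf(\cdot)$, so that $D_x\widetilde u$ essentially involves only $\mathbb{E}[D\phi]$. The $z$-dependence $hL\,|\widetilde v(x)-\widetilde v(x')|$ would then be bounded using the stronger estimate $|\widetilde v(x)-\widetilde v(x')|\le C\mathcal K_{i+1}|x-x'|$, which holds when one compares conditional expectations of the \emph{same} Lipschitz function under coupled Gaussian noise (a Malliavin or integration-by-parts bound in the spirit of \cite{GMPHWX22}, Proposition 3.6). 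Granting this, $\mathcal K_i\le(1+Ch)\mathcal K_{i+1}+Ch$, and Gronwall gives $\sup_{i,N}\mathcal K_i<\infty$ starting from $\mathcal K_N=L_g$. This is exactly the bounded sequence $\{\mathcal K_i\}$ claimed, with the stated class memberships $G_{\mathcal K_i}$ and $G_{\sqrt{d/h}\mathcal K_i}$. The network minimizers are chosen within these classes, so the induction hypothesis propagates.

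\emph{Step 2: approximation on a box and truncation of the tail.} On $[-R,R]^d$, Lemma~\ref{gropusort-est} provides GroupSort networks with sup-error $2\sqrt n R\mathcal K_i\eta$ for $\widetilde u$, and $2\sqrt{nd}\,R\sqrt{d/h}\,\mathcal K_i\eta$ for $\widetilde v$. Outside the box, both the targets and the networks grow linearly (Lipschitz with bounded values at the origin, since $|\beta_j|\le M$). By Hölder and Markov with the uniform moment bound $\sup_i\mathbb{E}|X_i^{\pi}|^{2+\delta}\le C$ (coming from $X_0\in L^{2+\delta}$), the tail contributes
\[
\mathbb{E}\bigl[(1+|X_i|^2)\mathbf 1_{|X_i|>R}\bigr]\le CR^{-\delta},
\]
with an extra factor $1/h$ in the $z$ case. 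Hence
\[
\varepsilon_i^y\le C\bigl(R^2\eta^2+R^{-\delta}\bigr),\qquad
\varepsilon_i^z\le Ch^{-1}\bigl(R^2\eta^2+R^{-\delta}\bigr).
\]

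\emph{Step 3: choice of parameters.} Taking $R=N^{3/\delta}$ makes $R^{-\delta}=N^{-3}$. Taking $\eta=N^{-3/2-3/\delta}$ makes $R^2\eta^2=N^{-3}$. Then $N\sum_i\varepsilon_i^y\le CN^2\cdot N^{-3}=Ch$, and $\sum_i\varepsilon_i^z\le CN\cdot N\cdot N^{-3}=Ch$. Together with the standard terms this gives the claimed $O(h)$ bound.

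Finally, substituting $\varepsilon=\eta$ into Lemma~\ref{gropusort-est} yields the stated grouping size, depth $L+1=O(d^2)$ and width $O\bigl((2\sqrt d\,N^{3/2+3/\delta})^{d^2-1}\bigr)$ for $d>1$, and the corresponding values for $d=1$.
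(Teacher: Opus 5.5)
Outside Step~1 your argument is essentially the paper's: the reduction to Theorem~\ref{ae-DMCE}, the terminal, $\bar Z$ and $N^2/K_{\mathrm{MC}}$ terms, the box/H\"older--Markov tail split giving $\varepsilon_i^y,\ h\varepsilon_i^z\le C(R^2\eta^2+R^{-\delta})$, and the choice of $R,\eta,K_{\mathrm{MC}}$.

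The gap is exactly at the obstacle you identified. The estimate you ``grant'', $|\widetilde v(t_i,x)-\widetilde v(t_i,x')|\le C\mathcal K_{i+1}|x-x'|$, is false for a merely Lipschitz $\phi$, and in particular for piecewise-affine GroupSort networks. Take $d=n=1$, $\mu=0$, $\sigma=1$ and $\phi(y)=|y|=\max(y,-y)$. Gaussian integration by parts gives
\[
\widetilde v(x)=\frac{1}{h}\,\mathbb{E}\left[|x+\Delta W_i|\,\Delta W_i\right]=2\,\mathbb{P}\left(G>-x/\sqrt{h}\right)-1,\qquad G\sim\mathcal N(0,1),
\]
whose slope at $x=0$ is $\sqrt{2/(\pi h)}$. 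A Malliavin/IBP representation $\widetilde v(x)=\mathbb{E}[D\phi(X^{i,x}_{i+1})]\sigma(t_i,x)$ would need $D\phi$ to be Lipschitz to give an $O(1)$ constant. That means propagating a $C^{1,1}$ bound through the scheme, which piecewise-affine networks do not have. The order $h^{-1/2}$ is sharp, which is why the statement only places the $Z$-networks in $G_{\sqrt{d/h}\,\mathcal K_i}$. So your recursion $\mathcal K_i\le(1+Ch)\mathcal K_{i+1}+Ch$ is unproved, and the honest unsquared recursion is the $e^{C\sqrt N}$ one.

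The missing idea is a Zhang-type variance cancellation. You do not need a better $Z$-Lipschitz constant; you need to work with squares. Let $X^{i,x}_{i+1},X^{i,x'}_{i+1}$ be the one-step Euler transitions driven by the same $\Delta W_i$, and set $\Delta\phi:=\phi(X^{i,x}_{i+1})-\phi(X^{i,x'}_{i+1})$ and $\overline{\Delta\phi}:=\mathbb{E}[\Delta\phi]$. Since $\mathbb{E}[\Delta W_i]=0$, centering and Cauchy--Schwarz give
\[
h\,|\widetilde v(x)-\widetilde v(x')|^2\le d\left(\mathbb{E}|\Delta\phi|^2-|\overline{\Delta\phi}|^2\right).
\]
Now expand $|\widetilde u(x)-\widetilde u(x')|^2=|\overline{\Delta\phi}+h\Delta f|^2$ with Young weights $(1+\gamma h)$ and $(1+\frac{1}{\gamma h})$.

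\begin{itemize}
\item The $z$-part of $h^2|\Delta f|^2$ contributes at most $3L^2h(h+\gamma^{-1})|\widetilde v(x)-\widetilde v(x')|^2\le 3dL^2(h+\gamma^{-1})\left(\mathbb{E}|\Delta\phi|^2-|\overline{\Delta\phi}|^2\right)$.
\item For $\gamma\ge 3dL^2$, this combines with $(1+\gamma h)|\overline{\Delta\phi}|^2$ into at most $(1+\gamma h)\mathbb{E}|\Delta\phi|^2\le(1+Ch)\mathcal K_{i+1}^2|x-x'|^2$.
\item The $|x-x'|^2$ and $|\widetilde u(x)-\widetilde u(x')|^2$ terms carry $O(h)$ coefficients.
\end{itemize}

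Hence $\mathcal K_i^2=(1+Ch)\mathcal K_{i+1}^2+Ch$ stays bounded by Gronwall, while $\widetilde v(t_i,\cdot)$ remains only $\sqrt{d/h}\,\mathcal K_i$-Lipschitz. This is how the paper proves Step~1. The cancellation uses $\mathbb{E}|\Delta\phi|^2-|\mathbb{E}\Delta\phi|^2$, so it is invisible in your unsquared recursion.

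A minor point on the tail step: you only need linear growth of the \emph{difference} between target and network. This follows from $0\in[-R,R]^d$, which makes that difference at most $CR\eta$ at $0$, together with the common Lipschitz bound. It does not follow from $|\beta_j|\le M$, since the shift $\alpha_i$ and scale $\overline{\beta}_i$ in the GroupSort class are unconstrained.
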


\begin{proof} 
Step 1. We first prove the Lipschitz propagation of the target functions which will
be approximated by GroupSort neural networks. For notational simplicity, for every
fixed time level \(t_i\), we write
\[
p_i^x:=(t_i,x),\qquad x\in\mathbb{R}^d.
\]
Set
\[
\Psi_N(x):=g(x),
\]
and, for \(0\leq i\leq N-1\),
\[
\Psi_i(x):=
\mathcal{NN}_{y,i,1+d,n}^{\varrho,L,\mathbf{n}}
(p_i^x,\theta_{y,i}^{*}).
\]
For \(x\in\mathbb{R}^d\), define the one-step Euler process starting from \(x\) at
time \(t_i\) by
\begin{equation}
\begin{aligned}
X_{i+1}^{i,x}
:=
x+\mu(t_i,x)h+\sigma(t_i,x)\Delta W_i .
\end{aligned}
\label{GS-Xix}
\end{equation}
Then the deterministic functions \(y_i:\mathbb{R}^d\to\mathbb{R}^n\) and
\(z_i:\mathbb{R}^d\to\mathbb{R}^{n\times d}\) are defined recursively by
\begin{equation}
\begin{aligned}
y_i(x)
&:=
\mathbb{E}\left[
\Psi_{i+1}\left(X_{i+1}^{i,x}\right)
\right]
+h f\left(t_i,x,y_i(x),z_i(x)\right),\\
z_i(x)
&:=
\mathbb{E}\left[
\Psi_{i+1}\left(X_{i+1}^{i,x}\right)
\frac{\Delta W_i^\top}{h}
\right],
\end{aligned}
\label{GS-yz-def}
\end{equation}
where the expectation is taken with respect to the Brownian increment
\(\Delta W_i\). In particular, by the Markov property of the Euler scheme,
\[
\mathcal{U}_i^m=y_i(X_i^{\pi,m}),
\qquad
\mathcal{V}_i^m=z_i(X_i^{\pi,m}).
\]

Let \(x,x'\in\mathbb{R}^d\), and let \(X_{i+1}^{i,x}\) and
\(X_{i+1}^{i,x'}\) be driven by the same Brownian increment \(\Delta W_i\). From
the Lipschitz continuity of \(\mu\) and \(\sigma\), we have
\begin{equation}
\begin{aligned}
\mathbb{E}\left[
\left|
X_{i+1}^{i,x}-X_{i+1}^{i,x'}
\right|^2
\right]
\leq (1+Ch)|x-x'|^2 .
\end{aligned}
\label{GS-X-stability}
\end{equation}
Assume that \(\Psi_{i+1}\) is \(\mathcal{K}_{i+1}\)-Lipschitz with respect to the
spatial variable. Define
\[
\Delta\Psi_{i+1}
:=
\Psi_{i+1}\left(X_{i+1}^{i,x}\right)
-
\Psi_{i+1}\left(X_{i+1}^{i,x'}\right),
\qquad
\overline{\Delta\Psi}_{i+1}
:=
\mathbb{E}\left[\Delta\Psi_{i+1}\right].
\]
Then, by \eqref{GS-X-stability},
\begin{equation}
\begin{aligned}
\mathbb{E}\left[
\left|
\Delta\Psi_{i+1}
\right|^2
\right]
\leq
\mathcal{K}_{i+1}^2
\mathbb{E}\left[
\left|
X_{i+1}^{i,x}-X_{i+1}^{i,x'}
\right|^2
\right]
\leq
(1+Ch)\mathcal{K}_{i+1}^2|x-x'|^2 .
\end{aligned}
\label{GS-Psi-stability}
\end{equation}

We next estimate the Lipschitz constant of \(z_i\). Since
\(\mathbb{E}[\Delta W_i]=0\), we have
\begin{equation}
\begin{aligned}
h\left(z_i(x)-z_i(x')\right)
=
\mathbb{E}\left[
\left(
\Delta\Psi_{i+1}
-
\overline{\Delta\Psi}_{i+1}
\right)
\Delta W_i^\top
\right].
\end{aligned}
\label{GS-z-centered}
\end{equation}
Therefore, by Cauchy-Schwarz inequality,
\begin{equation}
\begin{aligned}
h\left|z_i(x)-z_i(x')\right|^2
&\leq
d\,
\mathbb{E}\left[
\left|
\Delta\Psi_{i+1}
-
\overline{\Delta\Psi}_{i+1}
\right|^2
\right] \\
&\leq
d\,\mathbb{E}\left[
\left|
\Delta\Psi_{i+1}
\right|^2
\right]\\
&\leq
d(1+Ch)\mathcal{K}_{i+1}^2|x-x'|^2 .
\end{aligned}
\label{GS-z-Lip}
\end{equation}

Now we estimate the Lipschitz constant of \(y_i\). From \eqref{GS-yz-def},
\begin{equation}
\begin{aligned}
y_i(x)-y_i(x')
=
\overline{\Delta\Psi}_{i+1}
+
h\Delta f_i,
\end{aligned}
\label{GS-y-diff}
\end{equation}
where
\[
\Delta f_i
:=
f(t_i,x,y_i(x),z_i(x))
-
f(t_i,x',y_i(x'),z_i(x')).
\]
By Young inequality,
\[
|a+b|^2
\leq
(1+\gamma h)|a|^2
+
\left(1+\frac{1}{\gamma h}\right)|b|^2,
\]
and by the Lipschitz continuity of \(f\), we obtain
\begin{equation}
\begin{aligned}
|y_i(x)-y_i(x')|^2
&\leq
(1+\gamma h)\left|\overline{\Delta\Psi}_{i+1}\right|^2\\
&\quad+
3L^2h^2\left(1+\frac{1}{\gamma h}\right)
\left(
|x-x'|^2
+
|y_i(x)-y_i(x')|^2
+
|z_i(x)-z_i(x')|^2
\right).
\end{aligned}
\label{GS-y-Lip-1}
\end{equation}
Using \eqref{GS-z-centered}--\eqref{GS-z-Lip}, we further get
\begin{equation}
\begin{aligned}
|y_i(x)-y_i(x')|^2
&\leq
\left(1+\gamma h-3dL^2\left(h+\frac{1}{\gamma}\right)\right)
\left|
\overline{\Delta\Psi}_{i+1}
\right|^2\\
&\quad
+3dL^2\left(h+\frac{1}{\gamma}\right)
\mathbb{E}\left[
\left|
\Delta\Psi_{i+1}
\right|^2
\right]\\
&\quad
+3L^2h\left(h+\frac{1}{\gamma}\right)
\left(
|x-x'|^2
+
|y_i(x)-y_i(x')|^2
\right).
\end{aligned}
\label{GS-y-Lip-2}
\end{equation}
Since
\[
\left|
\overline{\Delta\Psi}_{i+1}
\right|^2
\leq
\mathbb{E}\left[
\left|
\Delta\Psi_{i+1}
\right|^2
\right],
\]
it follows from \eqref{GS-y-Lip-2}, for \(h\) small enough, that
\begin{equation}
\begin{aligned}
|y_i(x)-y_i(x')|^2
&\leq
(1+Ch)
\mathbb{E}\left[
\left|
\Delta\Psi_{i+1}
\right|^2
\right]
+
Ch|x-x'|^2\\
&\leq
\left((1+Ch)\mathcal{K}_{i+1}^2+Ch\right)|x-x'|^2 .
\end{aligned}
\label{GS-y-Lip-3}
\end{equation}

Set \(\mathcal{K}_N=L_g\) and define recursively
\begin{equation}
\begin{aligned}
\mathcal{K}_i^2
:=
(1+Ch)\mathcal{K}_{i+1}^2+Ch,
\qquad i=N-1,N-2,\ldots,0.
\end{aligned}
\label{GS-K-recursion}
\end{equation}
By the discrete Gronwall inequality, the sequence
\(\{\mathcal{K}_i\}_{i=0}^{N}\) is bounded uniformly in \(i\) and \(N\). Hence,
\begin{equation}
\begin{aligned}
|y_i(x)-y_i(x')|
\leq
\mathcal{K}_i |x-x'|.
\end{aligned}
\label{GS-y-final-Lip}
\end{equation}
Moreover, since \(\mathcal{K}_i^2\geq (1+Ch)\mathcal{K}_{i+1}^2\), from
\eqref{GS-z-Lip} we also have
\begin{equation}
\begin{aligned}
\sqrt{h}\,|z_i(x)-z_i(x')|
\leq
\sqrt{d}\,\mathcal{K}_i|x-x'|.
\end{aligned}
\label{GS-z-final-Lip}
\end{equation}
Thus, \(y_i\) is \(\mathcal{K}_i\)-Lipschitz and \(z_i\) is
\(\sqrt{d/h}\mathcal{K}_i\)-Lipschitz with respect to the spatial variable,
uniformly in \(i\) and \(N\).

Step 2. We approximate the spatial function \(y_i\) by means of a
\(\mathcal{K}_i\)-Lipschitz GroupSort neural network
\[
\mathcal{NN}_{y,i,1+d,n}^{\varrho,L,\mathbf{n}}(p_i^x,\theta_{y,i}^{*})
\in
G_{\mathcal{K}_i,1+d,n}^{\zeta_\kappa,L,\mathbf{n}}.
\]
Since \(t_i\) is fixed at each time step, Lemma \ref{gropusort-est} is applied to the spatial
variable \(x\in\mathbb{R}^d\). Hence, on \([-R,R]^d\), there exists such a
network satisfying
\[
\sup_{x\in[-R,R]^d}
\left|
y_i(x)-
\mathcal{NN}_{y,i,1+d,n}^{\varrho,L,\mathbf{n}}(p_i^x,\theta_{y,i}^{*})
\right|
\leq
2\mathcal{K}_i R\varepsilon .
\]
Similarly, by \eqref{GS-z-final-Lip}, \(z_i\) can be approximated by a
\(\sqrt{d/h}\mathcal{K}_i\)-Lipschitz GroupSort neural network
\[
\mathcal{NN}_{z,i,1+d,n\times d}^{\varrho,L,\mathbf{n}}(p_i^x,\theta_{z,i}^{*})
\in
G_{\sqrt{d/h}\mathcal{K}_i,1+d,n\times d}^{\zeta_\kappa,L,\mathbf{n}},
\]
and we have
\[
\sup_{x\in[-R,R]^d}
\sqrt{h}\left|
z_i(x)-
\mathcal{NN}_{z,i,1+d,n\times d}^{\varrho,L,\mathbf{n}}(p_i^x,\theta_{z,i}^{*})
\right|
\leq
2\sqrt d\,\mathcal{K}_i R\varepsilon .
\]

Step 3. We now estimate the approximation errors on the whole space. Let
\[
q:=1+\frac{\delta}{2},
\qquad
2q=2+\delta.
\]
For notational simplicity, write
\[
\mathcal{N}_{y,i}(x;\theta_{y,i})
:=
\mathcal{NN}_{y,i,1+d,n}^{\varrho,L,\mathbf{n}}
(p_i^x,\theta_{y,i}),
\qquad
\mathcal{N}_{z,i}(x;\theta_{z,i})
:=
\mathcal{NN}_{z,i,1+d,n\times d}^{\varrho,L,\mathbf{n}}
(p_i^x,\theta_{z,i}),
\]
where \(p_i^x=(t_i,x)\). Let \(\mathcal{N}_{y,i}^{*}\) and
\(\mathcal{N}_{z,i}^{*}\) be the GroupSort networks constructed in Step 2. Since
\(t_i\) is fixed at each time step, the GroupSort approximation result is applied
to the spatial variable \(x\in\mathbb{R}^d\). Hence, for \(R\geq 1\), there exists
a GroupSort network \(\mathcal{N}_{y,i}^{*}\) such that
\begin{equation}
\begin{aligned}
\sup_{x\in[-R,R]^d}
\left|
y_i(x)-\mathcal{N}_{y,i}^{*}(x)
\right|
\leq C R\eta,
\end{aligned}
\label{GS-y-uniform-error}
\end{equation}
where \(\eta>0\) is the approximation parameter in Lemma \ref{gropusort-est}.
Similarly, there exists a GroupSort network \(\mathcal{N}_{z,i}^{*}\) such that
\begin{equation}
\begin{aligned}
\sup_{x\in[-R,R]^d}
\sqrt{h}\left|
z_i(x)-\mathcal{N}_{z,i}^{*}(x)
\right|
\leq C R\eta.
\end{aligned}
\label{GS-z-uniform-error}
\end{equation}
Here and below, the constant \(C\) may depend on \(d,n,T,L\), and on the uniform
bound of the sequence \(\{\mathcal{K}_i\}_{i=0}^{N}\), but is independent of
\(N,R,\eta\), and the Monte Carlo sample sizes.

We first estimate the approximation error of the \(Y\)-component. By splitting the
domain into \(\{|X_i^{\pi,m}|\leq R\}\) and \(\{|X_i^{\pi,m}|>R\}\), we have
\begin{equation}
\begin{aligned}
&\inf_{\mathcal{NN}_{y,i}\in
\mathcal{G}_{\mathcal{K}_i,1+d,n}^{\zeta_\kappa,L,\mathbf{n}}}
\mathbb{E}\left[
\left|
y_i(X_i^{\pi,m})
-
\mathcal{NN}_{y,i,1+d,n}^{\varrho,L,\mathbf{n}}
(p_i^m,\theta_{y,i})
\right|^2
\right]  \\
&\leq
\mathbb{E}\left[
\left|
y_i(X_i^{\pi,m})
-
\mathcal{N}_{y,i}^{*}(X_i^{\pi,m})
\right|^2
\mathbf{1}_{\{|X_i^{\pi,m}|\leq R\}}
\right]  \\
&\quad+
\mathbb{E}\left[
\left|
y_i(X_i^{\pi,m})
-
\mathcal{N}_{y,i}^{*}(X_i^{\pi,m})
\right|^2
\mathbf{1}_{\{|X_i^{\pi,m}|> R\}}
\right].
\end{aligned}
\label{GS-y-split}
\end{equation}
From \eqref{GS-y-uniform-error}, the first term on the right-hand side of
\eqref{GS-y-split} is bounded by
\begin{equation}
\begin{aligned}
\mathbb{E}\left[
\left|
y_i(X_i^{\pi,m})
-
\mathcal{N}_{y,i}^{*}(X_i^{\pi,m})
\right|^2
\mathbf{1}_{\{|X_i^{\pi,m}|\leq R\}}
\right]
\leq C R^2\eta^2.
\end{aligned}
\label{GS-y-local}
\end{equation}
For the tail term, by H\"older inequality,
\begin{equation}
\begin{aligned}
&\mathbb{E}\left[
\left|
y_i(X_i^{\pi,m})
-
\mathcal{N}_{y,i}^{*}(X_i^{\pi,m})
\right|^2
\mathbf{1}_{\{|X_i^{\pi,m}|> R\}}
\right]\\
&\leq
\left(
\mathbb{E}\left[
\left|
y_i(X_i^{\pi,m})
-
\mathcal{N}_{y,i}^{*}(X_i^{\pi,m})
\right|^{2q}
\right]
\right)^{\frac1q}
\left(
\mathbb{P}\left(|X_i^{\pi,m}|>R\right)
\right)^{\frac{q-1}{q}} .
\end{aligned}
\label{GS-y-tail-holder}
\end{equation}
Since both \(y_i\) and \(\mathcal{N}_{y,i}^{*}\) are Lipschitz uniformly in \(i\) and
\(N\), and since
\[
\left|
y_i(0)-\mathcal{N}_{y,i}^{*}(0)
\right|
\leq C R\eta,
\]
we have
\begin{equation}
\begin{aligned}
\left(
\mathbb{E}\left[
\left|
y_i(X_i^{\pi,m})
-
\mathcal{N}_{y,i}^{*}(X_i^{\pi,m})
\right|^{2q}
\right]
\right)^{\frac1q}
\leq C\left(1+R^2\eta^2\right).
\end{aligned}
\label{GS-y-L2q-bound}
\end{equation}
Moreover, by Markov inequality and the standard moment estimate for the Euler
scheme,
\begin{equation}
\begin{aligned}
\mathbb{P}\left(|X_i^{\pi,m}|>R\right)
\leq
\frac{\mathbb{E}\left[|X_i^{\pi,m}|^{2q}\right]}{R^{2q}}
\leq
\frac{C}{R^{2q}},
\end{aligned}
\label{GS-X-tail}
\end{equation}
where we used \(X_0\in L^{2+\delta}\) and \(2q=2+\delta\). Therefore,
\begin{equation}
\begin{aligned}
\left(
\mathbb{P}\left(|X_i^{\pi,m}|>R\right)
\right)^{\frac{q-1}{q}}
\leq
\frac{C}{R^{2(q-1)}}
=
\frac{C}{R^\delta}.
\end{aligned}
\label{GS-X-tail-delta}
\end{equation}
Combining \eqref{GS-y-tail-holder}, \eqref{GS-y-L2q-bound}, and
\eqref{GS-X-tail-delta}, we get
\begin{equation}
\begin{aligned}
\mathbb{E}\left[
\left|
y_i(X_i^{\pi,m})
-
\mathcal{N}_{y,i}^{*}(X_i^{\pi,m})
\right|^2
\mathbf{1}_{\{|X_i^{\pi,m}|> R\}}
\right]
\leq
C\left(1+R^2\eta^2\right)R^{-\delta}.
\end{aligned}
\label{GS-y-tail}
\end{equation}
Consequently, by \eqref{GS-y-split}, \eqref{GS-y-local}, and \eqref{GS-y-tail},
for \(R\geq 1\) and \(R\eta\leq 1\), we obtain
\begin{equation}
\begin{aligned}
\varepsilon_i^y
&\leq
C\left(
R^2\eta^2+R^{-\delta}
\right).
\end{aligned}
\label{Deep-NN-E-r-7}
\end{equation}

Next, we estimate the approximation error of the \(Z\)-component. Applying the same
argument to the functions \(\sqrt{h}\,z_i\) and
\(\sqrt{h}\,\mathcal{N}_{z,i}^{*}\), and using \eqref{GS-z-uniform-error}, we obtain
\begin{equation}
\begin{aligned}
h\varepsilon_i^z
&=
\inf_{\mathcal{NN}_{z,i}\in
\mathcal{G}_{\sqrt{\frac{d}{h}}\mathcal{K}_i,1+d,n\times d}^{\zeta_\kappa,L,\mathbf{n}}}
\mathbb{E}\left[
h\left|
z_i(X_i^{\pi,m})
-
\mathcal{NN}_{z,i,1+d,n\times d}^{\varrho,L,\mathbf{n}}
(p_i^m,\theta_{z,i})
\right|^2
\right]\\
&\leq
C\left(
R^2\eta^2+R^{-\delta}
\right).
\end{aligned}
\label{Deep-NN-E-r-8}
\end{equation}
Set
\begin{equation}
\begin{aligned}
a_{R,\eta}:=R^2\eta^2+R^{-\delta}.
\end{aligned}
\label{GS-a-R-eta}
\end{equation}
Then \eqref{Deep-NN-E-r-7} and \eqref{Deep-NN-E-r-8} imply
\begin{equation}
\begin{aligned}
\varepsilon_i^y\leq C a_{R,\eta},
\qquad
h\varepsilon_i^z\leq C a_{R,\eta}.
\end{aligned}
\label{GS-eps-yz}
\end{equation}
Therefore,
\begin{equation}
\begin{aligned}
\sum_{i=0}^{N-1}
\left(
N\varepsilon_i^y+\varepsilon_i^z
\right)
\leq
C N^2 a_{R,\eta}.
\end{aligned}
\label{GS-sum-eps}
\end{equation}
Let
\[
K_{\mathrm{MC}}:=\min_{0\leq i\leq N-1}K_i
\]
be the minimal number of inner Monte Carlo samples. To achieve the convergence
rate \(O(1/N)\) in Theorem \ref{ae-DMCE}, it is sufficient to choose
\(R,\eta,K_{\mathrm{MC}}\) such that
\begin{equation}
\begin{aligned}
N^2\left(R^2\eta^2+R^{-\delta}\right)
=
O\left(\frac1N\right),
\qquad
\frac{N^2}{K_{\mathrm{MC}}}
=
O\left(\frac1N\right).
\end{aligned}
\label{dmce-cr-1}
\end{equation}
This is verified by taking
\begin{equation}
\begin{aligned}
R=O\left(N^{\frac{3}{\delta}}\right),
\qquad
\eta=O\left(N^{-\frac32-\frac{3}{\delta}}\right),
\qquad
K_{\mathrm{MC}}=O(N^3).
\end{aligned}
\label{GS-parameter-choice}
\end{equation}
Indeed, under this choice,
\[
R^2\eta^2=O(N^{-3}),
\qquad
R^{-\delta}=O(N^{-3}),
\qquad
\frac{N^2}{K_{\mathrm{MC}}}=O(N^{-1}).
\]

From Lemma \ref{gropusort-est}, if \(d>1\), the GroupSort neural networks can be
chosen with grouping size
\begin{equation}
\begin{aligned}
\kappa
=
O\left(
\left\lceil
2\sqrt{d}\,N^{\frac32+\frac{3}{\delta}}
\right\rceil
\right),
\end{aligned}
\label{GS-kappa-general-delta}
\end{equation}
depth \(L+1=O(d^2)\), and width
\begin{equation}
\begin{aligned}
\sum_{\ell=0}^{L-1}n_\ell
=
O\left(
\left(
2\sqrt{d}\,N^{\frac32+\frac{3}{\delta}}
\right)^{d^2-1}
\right).
\end{aligned}
\label{GS-width-general-delta}
\end{equation}
If \(d=1\), one can take
\begin{equation}
\begin{aligned}
\kappa
=
O\left(
N^{\frac32+\frac{3}{\delta}}
\right),
\qquad
L+1=3,
\qquad
\sum_{\ell=0}^{L-1}n_\ell
=
O\left(
N^{\frac32+\frac{3}{\delta}}
\right).
\end{aligned}
\label{GS-d1-general-delta}
\end{equation}

Since \(g\) is Lipschitz and
\[
\max_{0\leq i\leq N-1}
\mathbb{E}\left[
\sup_{s\in[t_i,t_{i+1}]}
|X_s-X_i^{\pi,m}|^2
\right]\leq Ch
\]
see \cite{KPEPE92}, we obtain
\begin{equation}
\begin{aligned}
\mathbb{E}\left[
\left|
g(X_T)-g(X_N^{\pi,m})
\right|^2
\right]
+
\mathbb{E}\left[
\sum_{i=0}^{N-1}
\int_{t_i}^{t_{i+1}}
|Z_s-\bar{Z}_{t_i}|^2ds
\right]
\leq Ch.
\end{aligned}
\label{dmce-cr-2}
\end{equation}
Combining \eqref{Deep-NN-E-r-7}, \eqref{Deep-NN-E-r-8},
\eqref{dmce-cr-1}, and \eqref{dmce-cr-2} with Theorem \ref{ae-DMCE}, we finally
obtain
\begin{equation}
\begin{aligned}
\max_{0\leq i\leq N-1}
\mathbb{E}\left[
\left|
Y_{t_i}-\mathcal{Y}_i
\right|^2
\right]
+
\mathbb{E}\left[
\sum_{i=0}^{N-1}
\int_{t_i}^{t_{i+1}}
|Z_s-\mathcal{Z}_i|^2ds
\right]
\leq Ch,
\end{aligned}
\nonumber
\end{equation}
which completes the proof.
\end{proof}

\section{Extension to variational inequalities}
In this section, we extend the DBR scheme to variational inequalities, referring to the resulting method as the RDBR scheme.
Then we study the convergence of the RDBR scheme for the variational inequalities.
Since the obstacle constraint requires an order relation and a projection operator,
we restrict this section to the scalar case \(n=1\). Thus, in this section,
\(u,Y,g,\Phi\) are real-valued, \(Z\in\mathbb{R}^{1\times d}\), and the reflecting
process \(\mathbf{K}\) is a real-valued adapted non-decreasing process. The maximum
operator \(\max\{\cdot,\cdot\}\) is understood in the usual scalar sense. We keep the
boldface notation \(\mathbf{K}\) in this section to distinguish the reflecting process
from the Monte Carlo sample sizes \(K_i\). We also assume the compatibility condition
\[
g(x)\geq \Phi(x),\qquad x\in\mathbb{R}^d.
\]

\subsection{RDBR Scheme}

Consider a variational inequality of the following form:
\begin{equation}
\begin{cases}
\min \left(
-\partial_t u-\mu \cdot D_x u
-\frac{1}{2}\operatorname{Tr}\left(\sigma\sigma^{\top}D_x^2u\right)
-f\left(\cdot,\cdot,u,\sigma^{\top}D_xu\right),
u-\Phi
\right)=0,
& \text{on }[0,T)\times\mathbb{R}^d,\\
u(T,x)=g(x),
& \text{on }\mathbb{R}^d.
\end{cases}
\label{eq:VI}
\end{equation}
This inequality arises, for instance, in optimal stopping problems and American
option pricing. It is established that such a variational inequality is associated
with a reflected forward backward stochastic differential equation (RFBSDE) given by
\begin{equation}
\begin{cases}
X_t=x_0+\displaystyle\int_0^t\mu(s,X_s)ds
+\displaystyle\int_0^t\sigma(s,X_s)dW_s,\\[1mm]
Y_t=g(X_T)+\displaystyle\int_t^T f(s,X_s,Y_s,Z_s)ds
-\displaystyle\int_t^T Z_s dW_s+\mathbf{K}_T-\mathbf{K}_t,\\[1mm]
Y_t\geq \Phi(X_t), \quad \forall t\in[0,T],\\[1mm]
\displaystyle\int_0^T\left(Y_t-\Phi(X_t)\right)d\mathbf{K}_t=0,
\end{cases}
\label{eq:RBSDE}
\end{equation}
where $\Phi:\mathbb{R}^{d}\rightarrow \mathbb{R}$ denotes the reflecting barrier
and $\mathbf{K}$ is a real-valued adapted non-decreasing process satisfying the
Skorokhod condition (see \cite{ElKNKCPEPSQMC97}).

The Euler time-discretization of the RFBSDE \eqref{eq:RBSDE}, at the mesh points
\(t_i\), is
\begin{equation}
Y_i^\pi
=
Y_{i+1}^\pi
+
h f(t_i,X_i^\pi,Y_i^\pi,Z_i^\pi)
-
Z_i^\pi\Delta W_i
+
\mathbf{K}_{i+1}^\pi-\mathbf{K}_i^\pi.
\nonumber
\end{equation}
Equivalently, it can be written in the following conditional expectation form:
\begin{equation}
\left\{
\begin{aligned}
\overline{Y}_i^\pi
=&\ \mathbb{E}_i\left[
Y_{i+1}^\pi
+
h f(t_i,X_i^\pi,\overline{Y}_i^\pi,Z_i^\pi)
\right],\\
Y_i^\pi
=&\ \max\left\{
\overline{Y}_i^\pi,\Phi(X_i^\pi)
\right\},\\
Z_i^\pi
=&\ \mathbb{E}_i\left[
Y_{i+1}^\pi\frac{\Delta W_i^\top}{h}
\right].
\end{aligned}
\right.
\label{RDBR-Euler}
\end{equation}

The backward process \(Y_{\cdot}^{\pi,m}\) in \eqref{eq:RBSDE} is simulated by the
Monte Carlo method in the following way, for \(i=N-1,N-2,\ldots,0\):
\begin{equation}
Y_i^{\pi,m}
=
Y_{i+1}^{\pi,m}
+
h f(t_i,X_i^{\pi,m},Y_i^{\pi,m},Z_i^{\pi,m})
-
Z_i^{\pi,m}\Delta W_i^m
+
\mathbf{K}_{i+1}^{\pi,m}-\mathbf{K}_i^{\pi,m}.
\nonumber
\end{equation}
This can also be expressed as the conditional expectation form
\begin{equation}
\left\{
\begin{aligned}
\overline{Y}_i^{\pi,m}
=&\ \mathbb{E}_i\left[
Y_{i+1}^{\pi,m}
+
h f(t_i,X_i^{\pi,m},\overline{Y}_i^{\pi,m},Z_i^{\pi,m})
\right],\\
Y_i^{\pi,m}
=&\ \max\left\{
\overline{Y}_i^{\pi,m},
\Phi(X_i^{\pi,m})
\right\},\\
Z_i^{\pi,m}
=&\ \mathbb{E}_i\left[
Y_{i+1}^{\pi,m}
\frac{(\Delta W_i^m)^\top}{h}
\right].
\end{aligned}
\right.
\label{NS-M-R}
\end{equation}

Thus, we define the fully discrete approximations
\(\{\widehat{\mathcal{Y}}_i\}_{i=0}^{N}\) and
\(\{\widehat{\mathcal{Z}}_i\}_{i=0}^{N-1}\) for the RDBR scheme associated with
the solutions \((Y_{t_i},Z_{t_i})\) of the variational inequality \eqref{eq:VI}, for
\(m=1,2,\ldots,M\), as follows:
\begin{enumerate}
\item
the terminal condition is
\[
\widehat{\mathcal{Y}}_N=g(X_N^{\pi,m}).
\]
No terminal value \(\widehat{\mathcal{Z}}_N\) is required by the backward scheme.

\item
for \(0\leq i<N\), the transition from \(i+1\) to \(i\) is given by
\begin{equation}
\left\{
\begin{aligned}
\overline{\mathcal{Y}}_i
=&\
\widehat{\mathcal{NN}}_{y,i,1+d,n}^{\varrho,L,\mathbf{n}}
(p_i^m,\theta_{y,i}^{*}),\\
\widehat{\mathcal{Y}}_i
=&\
\max\left\{
\overline{\mathcal{Y}}_i,\Phi(X_i^{\pi,m})
\right\},\\
\widehat{\mathcal{Z}}_i
=&\
\widehat{\mathcal{NN}}_{z,i,1+d,n\times d}^{\varrho,L,\mathbf{n}}
(p_i^m,\theta_{z,i}^{*}).
\end{aligned}
\right.
\label{NS-DMC-R}
\end{equation}
\end{enumerate}

For the training at time \(t_i\), the next-step value used in the conditional
expectation is the post-projection value. For \(m=1,\ldots,M\) and
\(k=1,\ldots,K_i\), define
\begin{equation}
\begin{aligned}
\widehat{\mathcal{Y}}_{i+1}^{m,k}
:=
\begin{cases}
g(X_N^{\pi,m,k}),
& i=N-1,\\[2mm]
\max\left\{
\widehat{\mathcal{NN}}_{y,i+1,1+d,n}^{\varrho,L,\mathbf{n}}
(p_{i+1}^{m,k},\theta_{y,i+1}^{*}),
\Phi(X_{i+1}^{\pi,m,k})
\right\},
& 0\leq i\leq N-2.
\end{cases}
\end{aligned}
\label{RDBR-next-projected-value-MC}
\end{equation}
Then the empirical losses are given by
\begin{equation}
\left\{
\begin{aligned}
\widehat{F}_{z,i}^{r_j}(\theta_{z,i}^{r_j})
=&\
\frac{1}{M}\sum_{m=1}^M
\left|
\frac{1}{K_i}\sum_{k=1}^{K_i}
\widehat{\mathcal{Y}}_{i+1}^{m,k}
\frac{(\Delta W_i^{m,k})^\top}{h}
-
\widehat{\mathcal{NN}}_{z,i,1+d,n\times d}^{\varrho,L,\mathbf{n}}
(p_i^m,\theta_{z,i}^{r_j})
\right|^2,\\
\widehat{F}_{y,i}^{r_j}(\theta_{y,i}^{r_j})
=&\
\frac{1}{M}\sum_{m=1}^M
\Bigg|
\frac{1}{K_i}\sum_{k=1}^{K_i}
\widehat{\mathcal{Y}}_{i+1}^{m,k}
+
h f\Big(
t_i,X_i^{\pi,m},
\widehat{\mathcal{NN}}_{y,i,1+d,n}^{\varrho,L,\mathbf{n}}
(p_i^m,\theta_{y,i}^{r_j}),\\
&\qquad\qquad\qquad\qquad
\widehat{\mathcal{NN}}_{z,i,1+d,n\times d}^{\varrho,L,\mathbf{n}}
(p_i^m,\theta_{z,i}^{*})
\Big)
-
\widehat{\mathcal{NN}}_{y,i,1+d,n}^{\varrho,L,\mathbf{n}}
(p_i^m,\theta_{y,i}^{r_j})
\Bigg|^2.
\end{aligned}
\right.
\label{RDBR-loss}
\end{equation}
Here
\[
\theta_{z,i}^{*}
\in
\arg\min_{\theta_{z,i}^{r_j}}
\widehat{F}_{z,i}^{r_j}(\theta_{z,i}^{r_j}),
\qquad
\theta_{y,i}^{*}
\in
\arg\min_{\theta_{y,i}^{r_j}}
\widehat{F}_{y,i}^{r_j}(\theta_{y,i}^{r_j}).
\]

As in Section~3, the losses \(\widehat F_{y,i}^{r_j}\) and
\(\widehat F_{z,i}^{r_j}\) above are empirical losses used in the numerical
implementation. The convergence analysis below is carried out at the population-loss
level. That is, all approximation errors are measured with respect to the true
expectation \(\mathbb E[\cdot]\), and exact minimization of the corresponding
population losses is assumed. The additional errors caused by empirical risk
approximation and non-exact stochastic optimization are not included in
Theorem~\ref{ae-DMCE-R}.

\subsection{Convergence analysis}

In this section, we analyze the convergence of the RDBR scheme \eqref{NS-DMC-R}
for the variational inequality \eqref{eq:VI} related to the solution \((Y,Z)\) of the
RFBSDE \eqref{eq:RBSDE}.

For the reflected scheme, the next-step value used in the conditional expectation
for the \(Z\)-component is the post-projection value. Therefore, for
\(i=N-1,\ldots,0\), we set
\begin{equation}
\begin{aligned}
\widehat{\mathcal{Y}}_{i+1}^{m}
:=
\begin{cases}
g(X_N^{\pi,m}),
& i=N-1,\\[2mm]
\max\left\{
\widehat{\mathcal{NN}}_{y,i+1,1+d,n}^{\varrho,L,\mathbf{n}}
(p_{i+1}^{m},\theta_{y,i+1}^{*}),
\Phi(X_{i+1}^{\pi,m})
\right\},
& 0\leq i\leq N-2.
\end{cases}
\end{aligned}
\label{RDBR-next-projected-value}
\end{equation}

Now, we investigate the errors of the scheme \eqref{NS-DMC-R} and define the
auxiliary quantities as below, for \(i=N-1,N-2,\ldots,0\):
\begin{equation}
\left\{
\begin{aligned}
\mathcal{U}_i^m
=&\
\mathbb{E}_i\left[
\widehat{\mathcal{Y}}_{i+1}^{m}
+
h f(t_i,X_i^{\pi,m},\mathcal{U}_i^m,\widehat{\mathcal{V}}_i^m)
\right],\\
\widehat{\mathcal{U}}_i^m
=&\
\max\left\{
\mathcal{U}_i^m,\Phi(X_i^{\pi,m})
\right\},\\
\widehat{\mathcal{V}}_i^m
=&\
\mathbb{E}_i\left[
\widehat{\mathcal{Y}}_{i+1}^{m}
\frac{(\Delta W_i^m)^\top}{h}
\right].
\end{aligned}
\right.
\label{Deep-NN-E-R}
\end{equation}
By the Markov property of the discretized forward process
\(\{X_i^{\pi,m}\}_{0\leq i\leq N}\), we have
\begin{equation}
\begin{aligned}
\mathcal{U}_i^m=\overline{u}(t_i,X_i^{\pi,m}),
\qquad
\widehat{\mathcal{V}}_i^m=\widehat{v}(t_i,X_i^{\pi,m}),
\end{aligned}
\label{Deep-NN-E--YZK-1R}
\end{equation}
where \(\overline{u}\) and \(\widehat{v}\) are some deterministic but unknown
functions.

For the Monte Carlo approximation of these conditional expectations, define
\begin{equation}
\left\{
\begin{aligned}
\widetilde{\mathcal{U}}_i^m
:=&\
\frac{1}{K_i}
\sum_{k=1}^{K_i}
\widehat{\mathcal{Y}}_{i+1}^{m,k}
+
h f\left(
t_i,X_i^{\pi,m},
\widetilde{\mathcal{U}}_i^m,
\widetilde{\widehat{\mathcal{V}}}_i^m
\right),\\
\widetilde{\widehat{\mathcal{U}}}_i^m
:=&\
\max\left\{
\widetilde{\mathcal{U}}_i^m,\Phi(X_i^{\pi,m})
\right\},\\
\widetilde{\widehat{\mathcal{V}}}_i^m
:=&\
\frac{1}{K_i}
\sum_{k=1}^{K_i}
\widehat{\mathcal{Y}}_{i+1}^{m,k}
\frac{(\Delta W_i^{m,k})^\top}{h}.
\end{aligned}
\right.
\label{RDBR-AUX-MC}
\end{equation}

Let us introduce
\begin{equation}
\begin{aligned}
\widehat{\varepsilon}_{i}^{y}
=&\
\inf_{\widehat{\mathcal{NN}}_{y,i,1+d,n}^{\varrho,L,\mathbf{n}}}
\mathbb{E}\left[
\left|
\overline{u}(t_i,X_i^{\pi,m})
-
\widehat{\mathcal{NN}}_{y,i,1+d,n}^{\varrho,L,\mathbf{n}}
(p_i^m,\theta_{y,i}^{r_j})
\right|^2
\right],\\
\widehat{\varepsilon}_{i}^{z}
=&\
\inf_{\widehat{\mathcal{NN}}_{z,i,1+d,n\times d}^{\varrho,L,\mathbf{n}}}
\mathbb{E}\left[
\left|
\widehat{v}(t_i,X_i^{\pi,m})
-
\widehat{\mathcal{NN}}_{z,i,1+d,n\times d}^{\varrho,L,\mathbf{n}}
(p_i^m,\theta_{z,i}^{r_j})
\right|^2
\right],
\end{aligned}
\label{Deep-NN-E--YZK-21R}
\end{equation}
and
\begin{equation}
\begin{aligned}
\overline{\varepsilon}_{i}^{y}
=&\
\inf_{\widehat{\mathcal{NN}}_{y,i,1+d,n}^{\varrho,L,\mathbf{n}}}
\mathbb{E}\left[
\left|
\widetilde{\mathcal{U}}_i^m
-
\widehat{\mathcal{NN}}_{y,i,1+d,n}^{\varrho,L,\mathbf{n}}
(p_i^m,\theta_{y,i}^{r_j})
\right|^2
\right],\\
\overline{\varepsilon}_{i}^{z}
=&\
\inf_{\widehat{\mathcal{NN}}_{z,i,1+d,n\times d}^{\varrho,L,\mathbf{n}}}
\mathbb{E}\left[
\left|
\widetilde{\widehat{\mathcal{V}}}_i^m
-
\widehat{\mathcal{NN}}_{z,i,1+d,n\times d}^{\varrho,L,\mathbf{n}}
(p_i^m,\theta_{z,i}^{r_j})
\right|^2
\right].
\end{aligned}
\label{Deep-NN-E--YZK-2R}
\end{equation}

The result is obtained under one of the following additional assumptions.

\textbf{Assumption 1.}
The terminal function \(g\) is Lipschitz and satisfies \(g\geq\Phi\). Moreover,
\(\Phi\) is \(C^1\), and \(\Phi,D_x\Phi\) are Lipschitz.

\textbf{Assumption 2.}
The terminal function \(g\) is Lipschitz and satisfies \(g\geq\Phi\). Moreover,
\(\sigma\) is \(C^1\), with \(\sigma,D_x\sigma\) both Lipschitz, and \(\Phi\) is
\(C^2\), with \(\Phi,D_x\Phi,D_x^2\Phi\) all Lipschitz.

For later use, we separate the time-discretization errors of the reflected BSDE into
the \(Y\)-component and the \(Z\)-component:
\begin{equation}
\begin{aligned}
\varepsilon_Y(h)
:=
\max_{0\leq i\leq N-1}
\mathbb{E}\left[
\left|
Y_{t_i}-Y_i^{\pi,m}
\right|^2
\right],
\qquad
\varepsilon_Z(h)
:=
\mathbb{E}\left[
\sum_{i=0}^{N-1}
\int_{t_i}^{t_{i+1}}
\left|
Z_t-Z_i^{\pi,m}
\right|^2dt
\right].
\end{aligned}
\label{RDBR-eps-YZ}
\end{equation}
According to \cite{BBGJF08}, we have
\[
\varepsilon_Y(h)=O(h^{1/2})
\quad\text{under Assumption 1},
\qquad
\varepsilon_Y(h)=O(h)
\quad\text{under Assumption 2},
\]
while
\[
\varepsilon_Z(h)=O(h^{1/2}).
\]

\begin{theorem}
\label{ae-DMCE-R}
Let the assumptions (i)--(iv) hold together with either Assumption 1 or
Assumption 2. Let \((Y_{t_i},Z_{t_i})\) and
\((\widehat{\mathcal{Y}}_i,\widehat{\mathcal{Z}}_i)\) be the solution of the
RFBSDE \eqref{eq:RBSDE} and the solution of the RDBR method \eqref{NS-DMC-R},
respectively. Then, for small enough \(h\), we have
\begin{equation}
\begin{aligned}
&\max_{0\leq i\leq N-1}
\mathbb{E}\left[
\left|
Y_{t_i}-\widehat{\mathcal{Y}}_i
\right|^2
\right]
+
\mathbb{E}\left[
\sum_{i=0}^{N-1}
\int_{t_i}^{t_{i+1}}
\left|
Z_s-\widehat{\mathcal{Z}}_i
\right|^2ds
\right]\\
&\leq
C\varepsilon_Y(h)
+
C\varepsilon_Z(h)
+
C\sum_{i=0}^{N-1}
\left(
N\widehat{\varepsilon}_{i}^{y}
+
\widehat{\varepsilon}_{i}^{z}
+
\frac{N}{K_i}
\right).
\end{aligned}
\label{RDBR-error-estimate}
\end{equation}
Consequently, using \eqref{RDBR-eps-YZ}, the time-discretization contribution is
of order \(O(h^{1/2})\) under Assumption 1. Under Assumption 2, the \(Y\)-component
has order \(O(h)\), while the \(Z\)-component remains \(O(h^{1/2})\); hence the
combined estimate in \eqref{RDBR-error-estimate} is still of order \(O(h^{1/2})\)
unless a sharper estimate for \(\varepsilon_Z(h)\) is available.
\end{theorem}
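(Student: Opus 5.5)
Rather than compare the RDBR output directly with the continuous reflected solution, the plan splits the error through the discrete reflected Euler scheme \eqref{NS-M-R}:
\[
Y_{t_i}-\widehat{\mathcal{Y}}_i=(Y_{t_i}-Y_i^{\pi,m})+(Y_i^{\pi,m}-\widehat{\mathcal{Y}}_i),
\qquad
Z_s-\widehat{\mathcal{Z}}_i=(Z_s-Z_i^{\pi,m})+(Z_i^{\pi,m}-\widehat{\mathcal{Z}}_i).
\]
The first pieces are exactly \(\varepsilon_Y(h)\) and \(\varepsilon_Z(h)\) in \eqref{RDBR-eps-YZ}, and the result of \cite{BBGJF08} bounds them under Assumption 1 or 2. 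It therefore remains to prove a purely discrete stability estimate for \(Y_i^{\pi,m}-\widehat{\mathcal{Y}}_i\) and \(Z_i^{\pi,m}-\widehat{\mathcal{Z}}_i\) in terms of the network and Monte Carlo errors. This estimate follows Steps 1--3 of the proof of Theorem \ref{ae-DMCE}, with \((Y_{t_i},Z_s)\) replaced by the discrete pair. The discrete comparison is simpler than the continuous one: the residual terms \(Ch\), \(\int|Z_s-\bar Z_{t_i}|^2\) and \(h\int f_s^2\) vanish, because \((Y^{\pi,m},Z^{\pi,m})\) satisfies the same conditional-expectation recursion as the auxiliary quantities \eqref{Deep-NN-E-R}.

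\emph{Step 1 (Y-component).} Compare \(\overline{Y}_i^{\pi,m}\) with \(\mathcal{U}_i^m\) from \eqref{Deep-NN-E-R}. Both are implicit conditional expectations driven respectively by \(Y_{i+1}^{\pi,m}\) and \(\widehat{\mathcal{Y}}_{i+1}^m\). The key point is that the projection \(y\mapsto\max\{y,\Phi(x)\}\) is \(1\)-Lipschitz for fixed \(x\), so
\[
|Y_i^{\pi,m}-\widehat{\mathcal Y}_i|\le|\overline Y_i^{\pi,m}-\overline{\mathcal Y}_i|
\quad\text{and}\quad
|Y_i^{\pi,m}-\widehat{\mathcal U}_i^m|\le|\overline Y_i^{\pi,m}-\mathcal U_i^m|.
\]
Hence the reflection never amplifies errors and there is no need to track \(\mathbf{K}\). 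Using the Lipschitz property of \(f\), the Young inequality with weights \(1\pm\gamma h\), and the martingale-increment identity for the \(Z\)-difference (exactly as in \eqref{Deep-NN-E-5} and \eqref{Deep-NN-E-12}), we obtain
\[
\mathbb E|\overline Y_i^{\pi,m}-\mathcal U_i^m|^2\le(1+Ch)\,\mathbb E|Y_{i+1}^{\pi,m}-\widehat{\mathcal Y}_{i+1}|^2
+\text{(a negative martingale term absorbing the \(Z\)-difference)}.
\]
Then \eqref{Deep-NN-E-6} gives the analogue of \eqref{Deep-NN-E-7}, with the additional term \(h^{-1}\mathbb E|\overline{\mathcal Y}_i-\mathcal U_i^m|^2\). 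The discrete Gronwall lemma starts from the terminal condition \(Y_N^{\pi,m}=\widehat{\mathcal Y}_N=g(X_N^{\pi,m})\), so there is no terminal error.

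\emph{Step 2 (network and MC errors).} I will repeat Step 2 of Theorem \ref{ae-DMCE} verbatim, with \(\widehat{\mathcal Y}_{i+1}^{m,k}\) in place of the network values. The sandwich argument \eqref{NS-MC-3-YZ}--\eqref{NS-MC-3--Y7} uses only the Lipschitz property of \(f\) and optimality of the population minimizers, so it yields
\[
\mathbb E|\overline{\mathcal Y}_i-\widetilde{\mathcal U}_i^m|^2+h\,\mathbb E|\widehat{\mathcal Z}_i-\widetilde{\widehat{\mathcal V}}_i^m|^2\le C(\overline\varepsilon_i^y+h\overline\varepsilon_i^z).
\]
For the Monte Carlo bounds \(C/K_i\) and \(C/(K_ih)\) I need second-moment bounds for \(\widehat{\mathcal Y}_{i+1}^{m}\), i.e. for the network output and for \(\Phi(X_{i+1})\). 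The latter follow from Lipschitz \(\Phi\) and the moment bounds of the Euler scheme. As in \eqref{Deep-NN-E--YZK-21-hh} and \eqref{Deep-NN-E--YZK-21-hhz}, this converts \(\overline\varepsilon\) into \(\widehat\varepsilon\) plus \(1/K_i\) terms. Keeping \(K_i\) rather than \(K\), the errors sum to
\[
C\sum_{i=0}^{N-1}\Bigl(N\widehat\varepsilon_i^y+\widehat\varepsilon_i^z+\frac{N}{K_i}\Bigr).
\]

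\emph{Step 3 (Z-component).} I will follow Step 3 of Theorem \ref{ae-DMCE}. Since \(Z_i^{\pi,m}-\widehat{\mathcal V}_i^m=\mathbb E_i[(Y_{i+1}^{\pi,m}-\widehat{\mathcal Y}_{i+1}^m)\Delta W_i^\top/h]\), the Cauchy--Schwarz inequality gives
\[
h\,|Z_i^{\pi,m}-\widehat{\mathcal V}_i^m|^2\le d\Bigl(\mathbb E_i|\delta Y_{i+1}|^2-|\mathbb E_i\delta Y_{i+1}|^2\Bigr),
\qquad \delta Y_{i+1}:=Y_{i+1}^{\pi,m}-\widehat{\mathcal Y}_{i+1}^m.
\]
This sum telescopes against the \(Y\)-estimate as in \eqref{Deep-NN-E-14}--\eqref{Deep-NN-E-15}, again using the \(1\)-Lipschitz projection to pass from \(\overline{Y}\) to \(Y\). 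Adding \(h\,\mathbb E|\widehat{\mathcal Z}_i-\widehat{\mathcal V}_i^m|^2\) from Step 2 and then the continuous-versus-discrete errors gives the estimate \eqref{RDBR-error-estimate}. The final rate statements follow by inserting the orders of \(\varepsilon_Y(h)\) and \(\varepsilon_Z(h)\).

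\emph{Main obstacle.} I expect the hardest step to be the telescoping in Step 3. The quantity that telescopes naturally is \(\mathbb E|\mathbb E_i\delta Y_{i+1}|^2\) versus \(\mathbb E|\delta Y_i|^2\), but the recursion controls only the pre-projection difference. I would first try the inequality \(\mathbb E|\delta Y_i|^2\le\mathbb E|\overline Y_i^{\pi,m}-\overline{\mathcal Y}_i|^2\) inside \eqref{Deep-NN-E-14}; because this inequality goes in the favourable direction, the argument should close without introducing any error term from the reflection.
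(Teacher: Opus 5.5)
Your proof is correct. Its skeleton matches the paper's: you split through the discrete reflected Euler scheme \eqref{NS-M-R}, quote \cite{BBGJF08} for $\varepsilon_Y(h)$ and $\varepsilon_Z(h)$, and run the Step~1--2 recursion using the $1$-Lipschitz projection. The real difference is in the $Z$-component.

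The paper does not telescope. In \eqref{RDBR-Z-target-error} it bounds each step crudely, $h\,\mathbb{E}|Z_i^{\pi,m}-\widehat{\mathcal{V}}_i^m|^2\le C\,\mathbb{E}|\delta Y_{i+1}|^2$, and then sums over $i$. That costs a factor $N\max_i\mathbb{E}|\delta Y_i|^2$. It still reproduces the weighting $N\widehat{\varepsilon}_i^y+\widehat{\varepsilon}_i^z+N/K_i$ only because of a dropped factor. Between \eqref{RDBR-Y-raw} and \eqref{RDBR-discrete-recursion}, the factor $CN$ in front of $\mathbb{E}|\overline{\mathcal{Y}}_i-\mathcal{U}_i^m|^2$ disappears, so \eqref{RDBR-Y-discrete-error} is one power of $N$ too strong. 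With the correct bound $\max_i\mathbb{E}|\delta Y_i|^2\le C\sum_i(N\widehat{\varepsilon}_i^y+\widehat{\varepsilon}_i^z+N/K_i)$, the paper's crude summation would give $N$ times the claimed $Z$-estimate.

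Your route is the telescoping of $\mathbb{E}_i|\delta Y_{i+1}|^2-|\mathbb{E}_i\delta Y_{i+1}|^2$, as in \eqref{Deep-NN-E-12}--\eqref{Deep-NN-E-15}, and it avoids this loss. After re-indexing, the terms $\mathbb{E}|\delta Y_i|^2$ carry a plus sign. So the inequality $\mathbb{E}|\delta Y_i|^2\le\mathbb{E}|\overline{Y}_i^{\pi,m}-\overline{\mathcal{Y}}_i|^2$ goes exactly in the needed direction, as you anticipated. Moreover $\delta Y_N=0$ and $-\mathbb{E}|\delta Y_0|^2$ can be discarded. Taking $\gamma$ large, you get
$$\textstyle\sum_i h\,\mathbb{E}|\delta Z_i|^2\le C\max_i\mathbb{E}|\delta Y_i|^2+Ch^{-1}\sum_i\mathbb{E}|\overline{\mathcal{Y}}_i-\mathcal{U}_i^m|^2,$$
where $\delta Z_i:=Z_i^{\pi,m}-\widehat{\mathcal{V}}_i^m$. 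This is precisely the order in \eqref{RDBR-error-estimate}.

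So the paper's argument is shorter but closes only through the dropped factor; yours is slightly longer and actually delivers the stated estimate. Both arguments rely on the same unstated uniform second-moment bound for the network outputs in the Monte Carlo variance estimates, which you at least flag explicitly.
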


\begin{proof}
From \cite{BBGJF08}, we have
\begin{equation}
\left\{
\begin{aligned}
\max_{0\leq i\leq N-1}
\mathbb{E}\left[
\left|
Y_{t_i}-Y_i^{\pi,m}
\right|^2
\right]
&=
\varepsilon_Y(h),\\
\mathbb{E}\left[
\sum_{i=0}^{N-1}
\int_{t_i}^{t_{i+1}}
\left|
Z_t-Z_i^{\pi,m}
\right|^2dt
\right]
&=
\varepsilon_Z(h).
\end{aligned}
\right.
\label{RDBR-disc-error}
\end{equation}
Moreover,
\[
\varepsilon_Y(h)=O(h^{1/2})
\quad\text{under Assumption 1},
\qquad
\varepsilon_Y(h)=O(h)
\quad\text{under Assumption 2},
\]
and
\[
\varepsilon_Z(h)=O(h^{1/2}).
\]

From \eqref{NS-M-R} and \eqref{Deep-NN-E-R}, we have, for
\(i\in\{0,1,\ldots,N-1\}\),
\begin{equation}
\begin{aligned}
\overline{Y}_i^{\pi,m}-\mathcal{U}_i^m
=&\
\mathbb{E}_i\left[
Y_{i+1}^{\pi,m}
-
\widehat{\mathcal{Y}}_{i+1}^{m}
\right]\\
&+
h\left(
f(t_i,X_i^{\pi,m},\overline{Y}_i^{\pi,m},Z_i^{\pi,m})
-
f(t_i,X_i^{\pi,m},\mathcal{U}_i^m,\widehat{\mathcal{V}}_i^m)
\right).
\end{aligned}
\label{RDBR-Y-local-identity}
\end{equation}
Proceeding similarly as Step 1 in the proof of Theorem \ref{ae-DMCE}, by Young
inequality, Cauchy-Schwarz inequality, and the Lipschitz condition with respect to
\(f\), we obtain
\begin{equation}
\begin{aligned}
\mathbb{E}\left[
\left|
\overline{Y}_i^{\pi,m}-\mathcal{U}_i^m
\right|^2
\right]
&\leq
(1+\gamma h)
\mathbb{E}\left[
\left|
\mathbb{E}_i\left[
Y_{i+1}^{\pi,m}
-
\widehat{\mathcal{Y}}_{i+1}^{m}
\right]
\right|^2
\right]\\
&\quad
+2L^2h^2\left(1+\frac{1}{\gamma h}\right)
\mathbb{E}\left[
\left|
\overline{Y}_i^{\pi,m}-\mathcal{U}_i^m
\right|^2
+
\left|
Z_i^{\pi,m}-\widehat{\mathcal{V}}_i^m
\right|^2
\right].
\end{aligned}
\label{RDBR-Y-local-1}
\end{equation}
From Cauchy-Schwarz inequality and the tower property of conditional expectations,
one yields
\begin{equation}
\begin{aligned}
\mathbb{E}\left[
\left|
Z_i^{\pi,m}-\widehat{\mathcal{V}}_i^m
\right|^2
\right]
&\leq
\frac{2d}{h}
\mathbb{E}\left[
\left|
Y_{i+1}^{\pi,m}
-
\widehat{\mathcal{Y}}_{i+1}^{m}
\right|^2
\right]\\
&\quad
-\frac{2d}{h}
\mathbb{E}\left[
\left|
\mathbb{E}_i\left[
Y_{i+1}^{\pi,m}
-
\widehat{\mathcal{Y}}_{i+1}^{m}
\right]
\right|^2
\right].
\end{aligned}
\label{RDBR-Z-aux-1}
\end{equation}
Plugging \eqref{RDBR-Z-aux-1} into \eqref{RDBR-Y-local-1} and choosing
\(\gamma=4dL^2\), we obtain, for \(h\) small enough,
\begin{equation}
\begin{aligned}
\mathbb{E}\left[
\left|
\overline{Y}_i^{\pi,m}-\mathcal{U}_i^m
\right|^2
\right]
\leq
(1+Ch)
\mathbb{E}\left[
\left|
Y_{i+1}^{\pi,m}
-
\widehat{\mathcal{Y}}_{i+1}^{m}
\right|^2
\right].
\end{aligned}
\label{RDBR-Y-local-2}
\end{equation}

Next, by Young inequality as in Step 1 of the proof of Theorem \ref{ae-DMCE}, we have
\begin{equation}
\begin{aligned}
\mathbb{E}\left[
\left|
\overline{Y}_i^{\pi,m}-\overline{\mathcal{Y}}_i
\right|^2
\right]
&\leq
(1+Ch)
\mathbb{E}\left[
\left|
Y_{i+1}^{\pi,m}
-
\widehat{\mathcal{Y}}_{i+1}^{m}
\right|^2
\right]\\
&\quad
+
CN
\mathbb{E}\left[
\left|
\overline{\mathcal{Y}}_i-\mathcal{U}_i^m
\right|^2
\right].
\end{aligned}
\label{RDBR-Y-raw}
\end{equation}
By the same arguments as in Step 2 in the proof of Theorem \ref{ae-DMCE}, it follows
that, for \(h\) small enough,
\begin{equation}
\begin{aligned}
\mathbb{E}\left[
\left|
\overline{\mathcal{Y}}_i-\mathcal{U}_i^m
\right|^2
\right]
\leq
C\left(
\widehat{\varepsilon}_{i}^{y}
+
h\widehat{\varepsilon}_{i}^{z}
+
\frac{1}{K_i}
\right).
\end{aligned}
\label{RDBR-local-network-Y}
\end{equation}
Since
\[
Y_i^{\pi,m}
=
\max\left\{
\overline{Y}_i^{\pi,m},
\Phi(X_i^{\pi,m})
\right\},
\qquad
\widehat{\mathcal{Y}}_i
=
\max\left\{
\overline{\mathcal{Y}}_i,
\Phi(X_i^{\pi,m})
\right\},
\]
and
\[
|\max(a,c)-\max(b,c)|\leq |a-b|,
\]
we obtain from \eqref{RDBR-Y-raw} and \eqref{RDBR-local-network-Y} that
\begin{equation}
\begin{aligned}
\mathbb{E}\left[
\left|
Y_i^{\pi,m}-\widehat{\mathcal{Y}}_i
\right|^2
\right]
&\leq
(1+Ch)
\mathbb{E}\left[
\left|
Y_{i+1}^{\pi,m}
-
\widehat{\mathcal{Y}}_{i+1}^{m}
\right|^2
\right]\\
&\quad
+
C\left(
\widehat{\varepsilon}_{i}^{y}
+
h\widehat{\varepsilon}_{i}^{z}
+
\frac{1}{K_i}
\right).
\end{aligned}
\label{RDBR-discrete-recursion}
\end{equation}
Using the terminal identity
\[
Y_N^{\pi,m}=g(X_N^{\pi,m})=\widehat{\mathcal{Y}}_N,
\]
and applying the discrete Gronwall inequality backward in time, we derive
\begin{equation}
\begin{aligned}
\max_{0\leq i\leq N-1}
\mathbb{E}\left[
\left|
Y_i^{\pi,m}-\widehat{\mathcal{Y}}_i
\right|^2
\right]
\leq
C\sum_{i=0}^{N-1}
\left(
\widehat{\varepsilon}_{i}^{y}
+
h\widehat{\varepsilon}_{i}^{z}
+
\frac{1}{K_i}
\right).
\end{aligned}
\label{RDBR-Y-discrete-error}
\end{equation}
Combining \eqref{RDBR-Y-discrete-error} with \eqref{RDBR-disc-error}, we deduce
\begin{equation}
\begin{aligned}
\max_{0\leq i\leq N-1}
\mathbb{E}\left[
\left|
Y_{t_i}-\widehat{\mathcal{Y}}_i
\right|^2
\right]
\leq
C\varepsilon_Y(h)
+
C\sum_{i=0}^{N-1}
\left(
\widehat{\varepsilon}_{i}^{y}
+
h\widehat{\varepsilon}_{i}^{z}
+
\frac{1}{K_i}
\right).
\end{aligned}
\label{RDBR-Y-error}
\end{equation}
This proves the estimate for the \(Y\)-component.

It remains to estimate the \(Z\)-component. By Young inequality and the definition
of \(\varepsilon_Z(h)\), we have
\begin{equation}
\begin{aligned}
&\mathbb{E}\left[
\sum_{i=0}^{N-1}
\int_{t_i}^{t_{i+1}}
\left|
Z_s-\widehat{\mathcal{Z}}_i
\right|^2ds
\right]\\
&\leq
2\mathbb{E}\left[
\sum_{i=0}^{N-1}
\int_{t_i}^{t_{i+1}}
\left|
Z_s-Z_i^{\pi,m}
\right|^2ds
\right]
+
2\sum_{i=0}^{N-1}
h\mathbb{E}\left[
\left|
Z_i^{\pi,m}-\widehat{\mathcal{Z}}_i
\right|^2
\right]\\
&\leq
2\varepsilon_Z(h)
+
2\sum_{i=0}^{N-1}
h\mathbb{E}\left[
\left|
Z_i^{\pi,m}-\widehat{\mathcal{Z}}_i
\right|^2
\right].
\end{aligned}
\label{RDBR-Z-split}
\end{equation}
For each \(i=0,\ldots,N-1\), since
\[
Z_i^{\pi,m}
=
\mathbb{E}_i\left[
Y_{i+1}^{\pi,m}\frac{(\Delta W_i^m)^\top}{h}
\right],
\qquad
\widehat{\mathcal{V}}_i^m
=
\mathbb{E}_i\left[
\widehat{\mathcal{Y}}_{i+1}^{m}
\frac{(\Delta W_i^m)^\top}{h}
\right],
\]
Cauchy-Schwarz inequality yields
\begin{equation}
\begin{aligned}
h\mathbb{E}\left[
\left|
Z_i^{\pi,m}-\widehat{\mathcal{V}}_i^m
\right|^2
\right]
\leq
C\mathbb{E}\left[
\left|
Y_{i+1}^{\pi,m}
-
\widehat{\mathcal{Y}}_{i+1}^{m}
\right|^2
\right].
\end{aligned}
\label{RDBR-Z-target-error}
\end{equation}
Moreover, by the same regression argument as in Step 2 of the proof of Theorem
\ref{ae-DMCE}, together with the Monte Carlo estimate for the \(Z\)-label, we have
\begin{equation}
\begin{aligned}
h\mathbb{E}\left[
\left|
\widehat{\mathcal{V}}_i^m-\widehat{\mathcal{Z}}_i
\right|^2
\right]
\leq
C\left(
h\widehat{\varepsilon}_{i}^{z}
+
\frac{1}{K_i}
\right).
\end{aligned}
\label{RDBR-Z-network-error}
\end{equation}
Combining \eqref{RDBR-Z-target-error} and \eqref{RDBR-Z-network-error}, we obtain
\begin{equation}
\begin{aligned}
h\mathbb{E}\left[
\left|
Z_i^{\pi,m}-\widehat{\mathcal{Z}}_i
\right|^2
\right]
\leq
C\mathbb{E}\left[
\left|
Y_{i+1}^{\pi,m}
-
\widehat{\mathcal{Y}}_{i+1}^{m}
\right|^2
\right]
+
C\left(
h\widehat{\varepsilon}_{i}^{z}
+
\frac{1}{K_i}
\right).
\end{aligned}
\label{RDBR-Z-local}
\end{equation}
Summing \eqref{RDBR-Z-local} over \(i=0,\ldots,N-1\), and using
\eqref{RDBR-Y-discrete-error}, we get
\begin{equation}
\begin{aligned}
\sum_{i=0}^{N-1}
h\mathbb{E}\left[
\left|
Z_i^{\pi,m}-\widehat{\mathcal{Z}}_i
\right|^2
\right]
&\leq
C\sum_{i=0}^{N-1}
\mathbb{E}\left[
\left|
Y_{i+1}^{\pi,m}
-
\widehat{\mathcal{Y}}_{i+1}^{m}
\right|^2
\right]
+
C\sum_{i=0}^{N-1}
\left(
h\widehat{\varepsilon}_{i}^{z}
+
\frac{1}{K_i}
\right)\\
&\leq
C\sum_{i=0}^{N-1}
\left(
N\widehat{\varepsilon}_{i}^{y}
+
\widehat{\varepsilon}_{i}^{z}
+
\frac{N}{K_i}
\right).
\end{aligned}
\label{RDBR-Z-discrete-network}
\end{equation}
Inserting \eqref{RDBR-Z-discrete-network} into \eqref{RDBR-Z-split}, we obtain
\begin{equation}
\begin{aligned}
\mathbb{E}\left[
\sum_{i=0}^{N-1}
\int_{t_i}^{t_{i+1}}
\left|
Z_s-\widehat{\mathcal{Z}}_i
\right|^2ds
\right]
\leq
C\varepsilon_Z(h)
+
C\sum_{i=0}^{N-1}
\left(
N\widehat{\varepsilon}_{i}^{y}
+
\widehat{\varepsilon}_{i}^{z}
+
\frac{N}{K_i}
\right).
\end{aligned}
\label{RDBR-Z-error}
\end{equation}
Finally, adding \eqref{RDBR-Y-error} and \eqref{RDBR-Z-error}, and using
\[
\sum_{i=0}^{N-1}
\left(
\widehat{\varepsilon}_{i}^{y}
+
h\widehat{\varepsilon}_{i}^{z}
+
\frac{1}{K_i}
\right)
\leq
C\sum_{i=0}^{N-1}
\left(
N\widehat{\varepsilon}_{i}^{y}
+
\widehat{\varepsilon}_{i}^{z}
+
\frac{N}{K_i}
\right),
\]
we obtain \eqref{RDBR-error-estimate}. This completes the proof.
\end{proof}

\section{Numerical experiments}

In this section, we demonstrate the performance of \textbf{Algorithm 1}
to solve the high dimensional nonlinear PDEs (\ref{PDE}) via two numerical examples.

All the numerical tests are implemented in Python 3.8 on a desktop computer with  Intel(R) Xeon(R) Gold 6133 CPU (2.50GHz)
and 64 GB RAM (3200MHz), and an NVIDIA GeForce RTX 4090  GPU (24 GB) with CUDA 11.8 support. All implementations were developed using PyTorch 2.0.
The corresponding code is available at: https://github.com/22w2e/A-DNMC-algorithm-for-high-dimensional-nonlinear-PDEs
and the results are recorded. Note that we still implement the proposed deep method by the feedforward neural network
because the computational cost of the GroupSort network is bigger than that of the costly feedforward neural network
in practice (see \cite{GMPHWX22}).
To mitigate randomness inherent in
 stochastic sampling, each experiment is independently repeated $10$ times under identical conditions,
 and the corresponding results are reported.
Although a larger sample size would tighten the confidence intervals, the computational cost for these high-dimensional problems is significant. Ten repetitions allow us to obtain a reliable estimate of the mean error magnitude and variance, which is consistent with standard practices in related literature \cite{HCPHWX20}. 

For notational clarity in the numerical tables, \(u(0,x_0)\) denotes the exact
analytical value at the initial point. For the \(\ell\)-th independent run, we denote
the corresponding numerical estimate by \(u_{\ell}^{\pi}(0,x_0)\). The sample mean
reported in the tables is denoted by
\[
\overline{u}^{\,\pi}(0,x_0)
:=
\frac{1}{N_{\mathrm{rep}}}
\sum_{\ell=1}^{N_{\mathrm{rep}}}
u_{\ell}^{\pi}(0,x_0),
\qquad
N_{\mathrm{rep}}=10.
\]
The reported mean absolute error and relative error are computed by
\[
\mathbb{E}\left[
\left|
u(0,x_0)-u^{\pi}(0,x_0)
\right|
\right]
\approx
\frac{1}{N_{\mathrm{rep}}}
\sum_{\ell=1}^{N_{\mathrm{rep}}}
\left|
u(0,x_0)-u_{\ell}^{\pi}(0,x_0)
\right|,
\]
and
\[
\mathrm{Rel.\ err.}
=
\frac{
\left|
u(0,x_0)-\overline{u}^{\,\pi}(0,x_0)
\right|
}{
\left|
u(0,x_0)
\right|
},
\]
respectively.

\textbf{Example 1.}
We first consider the following $d$-dimensional nonlinear parabolic PDE (taken from \cite{EWHJJA17}):
\begin{equation}
\label{Example-1}
\begin{cases}
\displaystyle
\frac{\partial u}{\partial t}(t,x)
+  \frac{\sigma^2}{2}\sum_{\ell=1}^{d}D^2_{x_\ell}u(t,x)
+ \Big(u(t,x) - \frac{d+2}{2d}\Big)
    \Big(\sigma \sum_{\ell=1}^{d}  D_{x_\ell}u(t,x)\Big)
= 0,\\ 
\displaystyle
u(T,x)=\frac{\exp\!\left(T+\tfrac{1}{d}\sum_{\ell=1}^{d}x_\ell\right)}
           {1+\exp\!\left(T+\tfrac{1}{d}\sum_{\ell=1}^{d}x_\ell\right)} .
\end{cases}
\end{equation}

The corresponding analytic solution is explicitly given by
\[
u(t,x)
=\frac{\exp\!\left(t+\tfrac{1}{d}\sum_{\ell=1}^{d}x_\ell\right)}
        {1+\exp\!\left(t+\tfrac{1}{d}\sum_{\ell=1}^{d}x_\ell\right)}.
\]

In the numerical experiments we take the terminal time $T=1$,
the number of time steps $N=10$, the Monte Carlo samples $M=10000$,
and the initial state $x_0=(0,0,\dots,0)^\top \in \mathbb{R}^d$. Thus, the exact solution of \eqref{Example-1} at the initial point is
\(u(0,x_0)=0.5\). The numerical estimate obtained in each independent run is denoted
by \(u_{\ell}^{\pi}(0,x_0)\), and its sample mean is denoted by
\(\overline{u}^{\,\pi}(0,x_0)\).
The numerical solver is implemented using a deep learning framework where two sub-networks are trained at each time step to approximate the solution $u$ and its gradient $\nabla u$. Each network consists of two hidden layers with $d+110$ units and $\tanh$ activations. We utilize the Adam optimizer with a learning rate of $5 \times 10^{-4}$ and conduct 6,000 training iterations per step to ensure the convergence of the loss function.

\begin{figure}[H]
\centering
\subfigure[\footnotesize  $u(t, x)$ and its estimate at time $t=0.2$.]
{\includegraphics[width=6cm,height=3.1cm]{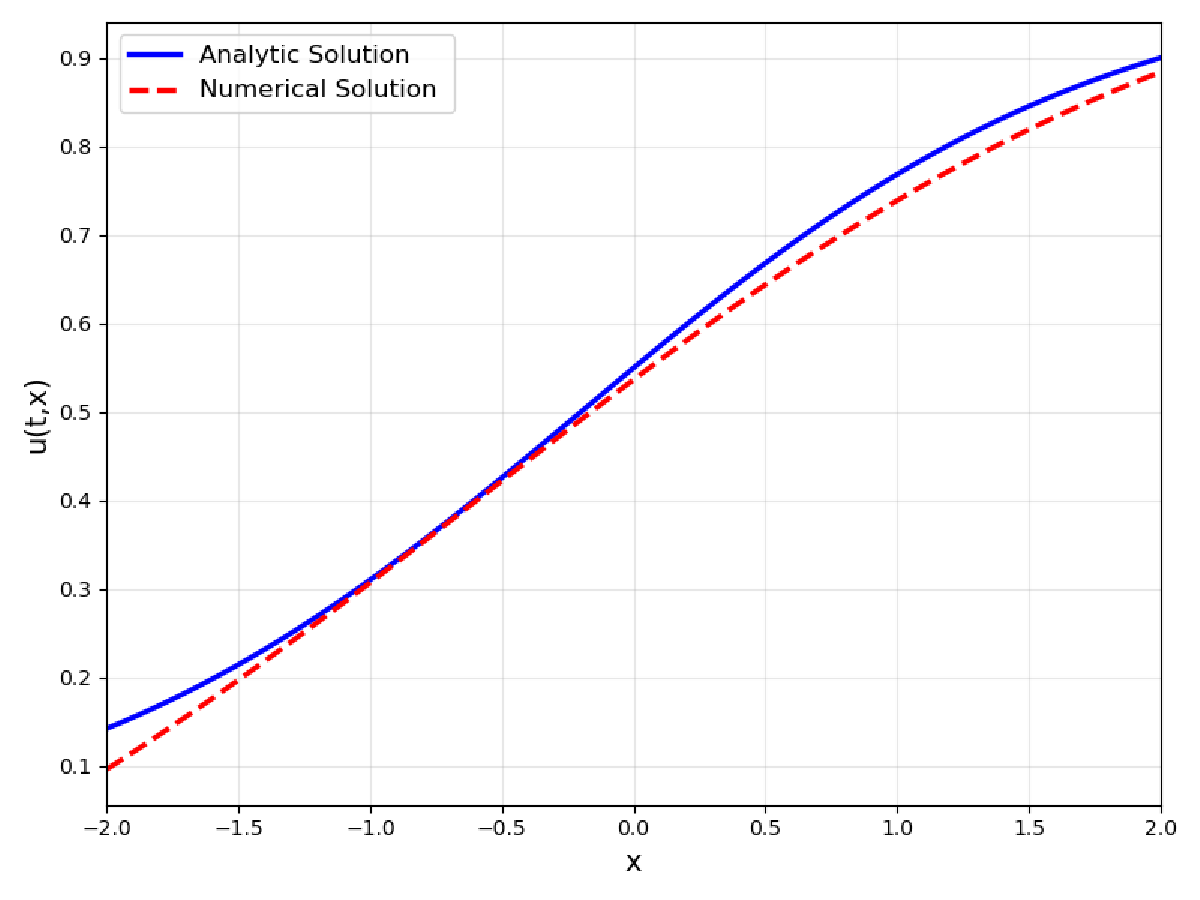}}~~~~
\subfigure[\footnotesize  $u(t, x)$ and its estimate at time $t=0.4$.]
{\includegraphics[width=6cm,height=3.1cm]{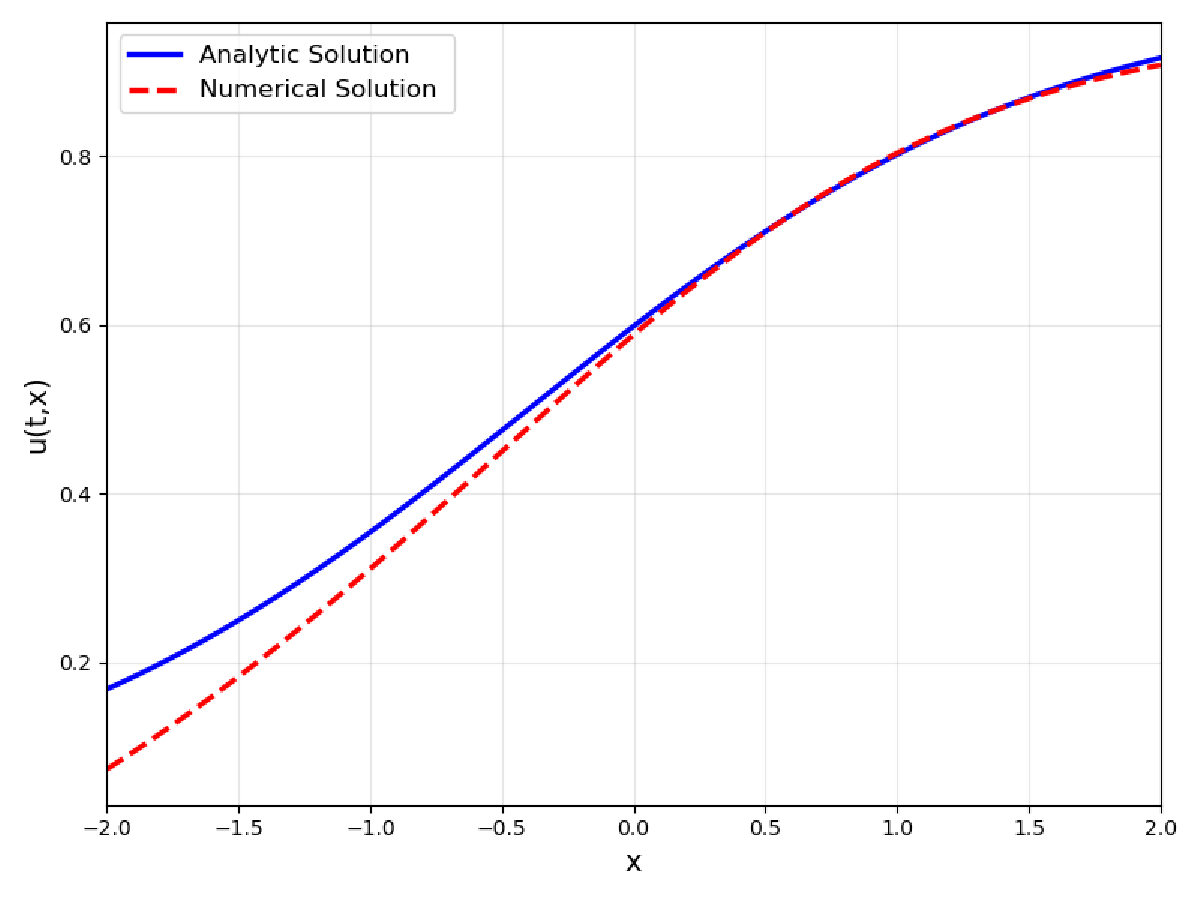}}
\\
\subfigure[\footnotesize  $u(t, x)$ and its estimate at time $t=0.6$.]
{\includegraphics[width=6cm,height=3.1cm]{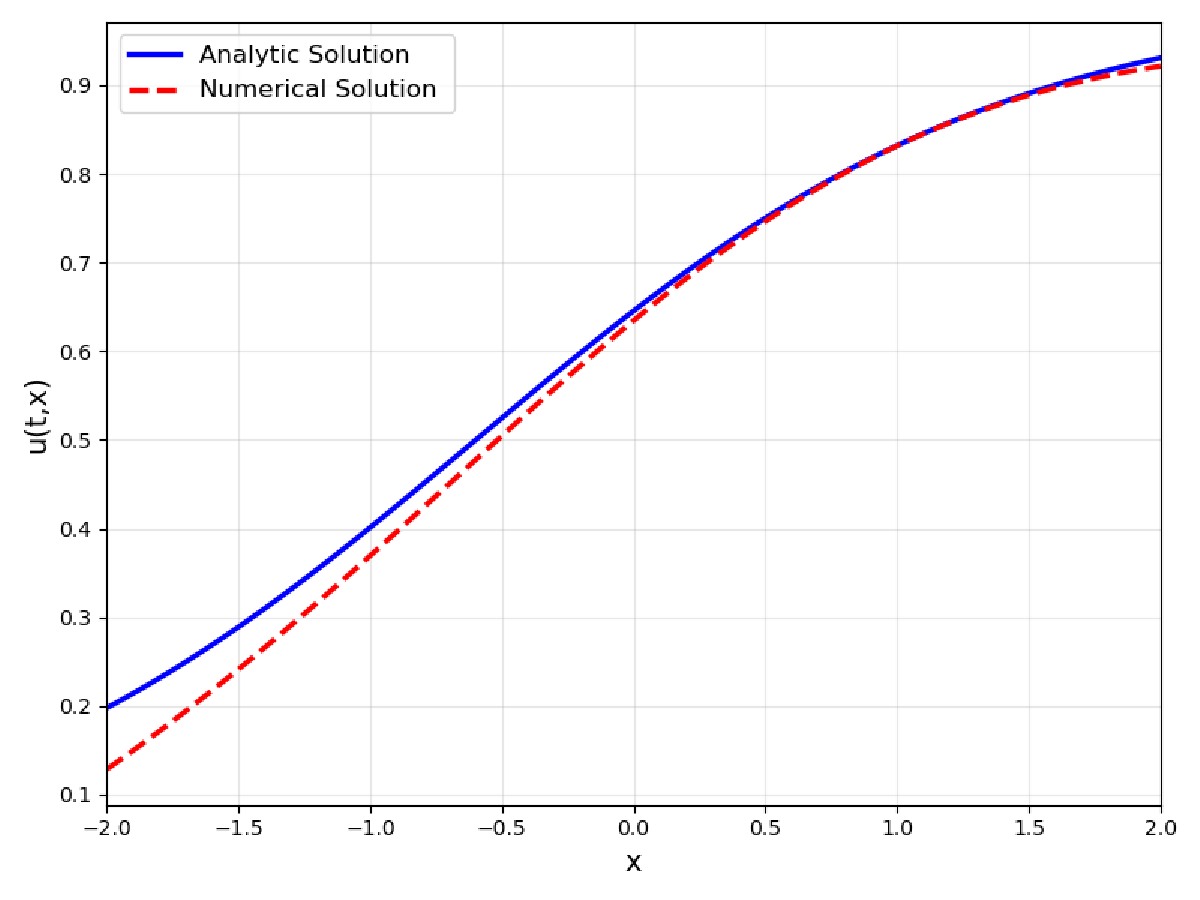}}~~~~
\subfigure[\footnotesize  $u(t, x)$ and its estimate at time $t=0.8$.]
{\includegraphics[width=6cm,height=3.1cm]{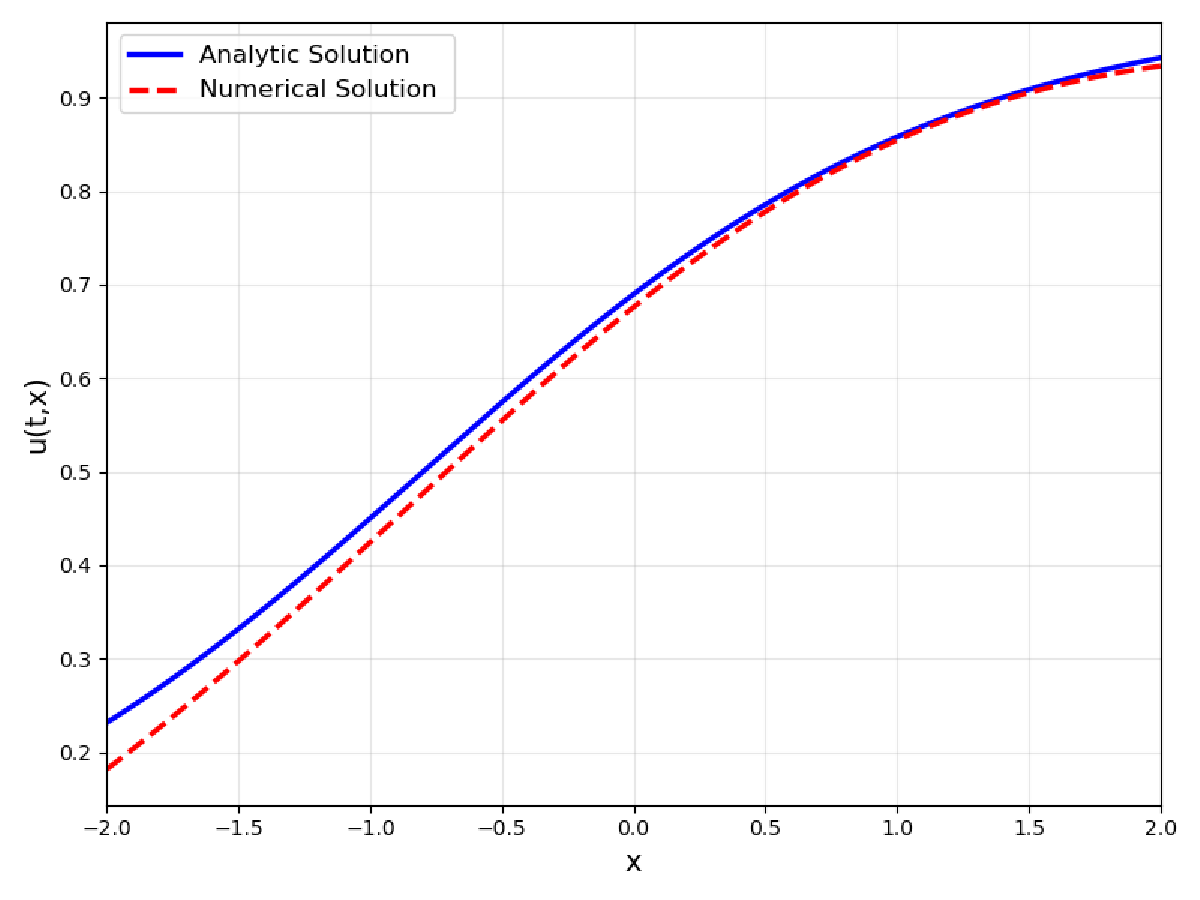}}
\caption{\footnotesize Estimated solution  $\overline{\mathfrak{u}}(t,x)$ obtained by DBR versus exact solution $u(t,x)$ for Example~1 with $d=1$.}\label{fex1}
\end{figure}

Figure~\ref{fex1} provides a qualitative visualization of the DBR algorithm's approximation accuracy across the entire time domain. The figure displays the estimated solution $\overline{\mathfrak{u}}(t, x)$ alongside the exact analytical solution $u(t, x)$ at four distinct time snapshots ($t=0.2, 0.4, 0.6, 0.8$) for the one-dimensional case. Across all four subplots, the estimated values exhibit near-perfect alignment with the true solution, with no visible divergence or significant error at any intermediate time step. This visual evidence demonstrates that the high accuracy achieved by the DBR method is not confined to the initial condition at $t=0$, but is consistently maintained throughout the entire time horizon from $t=0$ to $t=T$. The results confirm the DBR scheme's capability to correctly propagate the solution backward in time through its learned network functions, validating the effectiveness of the expectation-based loss functions in capturing the underlying dynamics of the PDE.

\begin{table}[!htb]
  \centering
  \begin{threeparttable}
    \caption{Comparison of numerical results for different schemes in Example~1 across different dimensions.}
    \label{tex1}
    \setlength{\tabcolsep}{1.3mm}{
   \begin{tabular}{lcccccc}
\toprule
Scheme
& \(d\)
& \(u(0,x_0)\)
& \(\overline{u}^{\,\pi}(0,x_0)\)
& Std. dev.
& \(\mathbb{E}[|u-u^{\pi}|]\)
& Rel. err.\\
\midrule
      \multirow{4}{*}{DBR}             & 50  & 0.500000 & 0.517697 & 0.007637 & 0.017697 & 3.5393\% \\
                                       & 80  & 0.500000 & 0.508568 & 0.007589 & 0.008683 & 1.7137\% \\
                                       & 100 & 0.500000 & 0.507220 & 0.005692 & 0.008006 & 1.4441\%  \\
                                       &200 & 0.500000 & 0.482896 & 0.007052 & 0.017104 & 3.4207\% \\
      \midrule
      \multirow{4}{*}{
    DBDP1~\cite{HCPHWX20}}             & 50  & 0.500000 & 0.569523 & 0.008724 & 0.069523 & 13.9047\%   \\
                                       & 80  & 0.500000 & 0.548494 & 0.005876 & 0.048494 & 9.6989\% \\
                                       & 100 & 0.500000 & 0.545578 & 0.005622 & 0.045578 & 9.1155\%  \\
                                       &200 & 0.500000 & 0.524425 & 0.002196 & 0.024425 & 4.8850\% \\
      \bottomrule
    \end{tabular}}
  \end{threeparttable}
\end{table}

Table~\ref{tex1} complements these visual findings by providing a quantitative comparison between the proposed DBR algorithm and the existing DBDP1 method in \cite{HCPHWX20} across multiple high dimensions ($50, 80, 100$ and $200$).
For each scheme and dimension, we report the exact value \(u(0,x_0)\), the sample
mean \(\overline{u}^{\,\pi}(0,x_0)\) over 10 independent runs, the empirical standard
deviation \(\mathrm{Std.\ dev.}(u^{\pi}(0,x_0))\), the mean absolute error
\[
\mathbb{E}\left[
\left|
u(0,x_0)-u^{\pi}(0,x_0)
\right|
\right],
\]
and the relative error
\[
\mathrm{Rel.\ err.}
=
\frac{
\left|
u(0,x_0)-\overline{u}^{\,\pi}(0,x_0)
\right|
}{
\left|
u(0,x_0)
\right|
}.
\]
Regarding accuracy, the DBR algorithm maintains a relative error consistently below $4\%$ across all tested dimensions, significantly outperforming the DBDP1 method which exhibits errors as high as $13.9\%$ at dimension $50$ and $9.7\%$ at dimension $80$.  More importantly, the mean absolute error, which represents the average magnitude of the absolute deviation between the true solution and the numerical approximation, is consistently smaller for the DBR method across all dimensions. For instance, at dimension $100$, DBR achieves a mean absolute error of $0.008006$ compared to DBDP1's $0.045578$, indicating that the DBR algorithm produces approximations that are substantially closer to the true solution on average. This observation is consistent with the motivation of the DBR scheme, namely that
conditional-expectation-based labels can reduce pathwise noise in the training
targets.

\textbf{Example 2.}
(see \cite{GMPHWX22})
Consider the following high-dimensional nonlinear PDE with an unbounded solution
and a more complex structure:
\begin{equation}
\begin{aligned}
\begin{cases}
\partial_t u + \frac{1}{2} \operatorname{Tr}\left(\frac{I_d}{\sqrt{d}} (\frac{I_d}{\sqrt{d}})^{\top} D_x^2 u\right)+f\left(\cdot, \cdot, u, \sigma^{\top} D_x u\right)=0 & \text { on }[0, T) \times \mathbb{R}^d, \\
u(T, \cdot)=g,
\end{cases}
\end{aligned}
\label{PDE-2}
\end{equation}
where the generator
$$
\begin{aligned}
f(t, x, y, z)=\hat{k}(t, x)-\frac{y}{\sqrt{d}}\left(1_d \cdot z\right)-\frac{y^2}{2},
\end{aligned}
$$
with $-\hat{k}(t, x)=\partial_t u+\frac{1}{2 d} \operatorname{Tr}\left(D_x^2 u\right)+\frac{u}{\sqrt{d}} D_{x_i} u+\frac{u^2}{2}$.
Thus, the solution of PDE (\ref{PDE-2}) is given by
$u(t, x)=\frac{T-t}{d} \sum_{i=1}^d\left(\sin \left(x_i\right) 1_{x_i<0}+x_i 1_{x_i \geq 0}\right)+\cos \left(\sum_{i=1}^d i x_i\right).$

Example~2 presents a more challenging class of high-dimensional nonlinear PDEs characterized by an unbounded solution and a complex structural form. 
We note that Example 2 involves an unbounded solution. Hence this benchmark is used
to test the robustness of the proposed algorithm beyond the bounded classical setting.
The theoretical estimates above are stated under the regularity and integrability
conditions specified in Sections 2 and 3.

To comprehensively evaluate the performance of the proposed DBR algorithm, we conduct systematic numerical experiments across dimensions $d=1, 2, 8, 15, 20$ with varying time steps $N$, where the initial point is fixed as $x_0=0.5\mathbf{1}_d$ for all cases.
The neural network architecture consists of two hidden layers with hyperbolic tangent activation functions, where each hidden layer contains $d + 10$ units. The model is trained using a batch size of 400 and a total of 3000 iterations.

\begin{figure}[H]
\centering
\subfigure[\footnotesize  $u(t, x)$ and its estimate at time $t=0.3$.]
{\includegraphics[width=6cm,height=3.1cm]{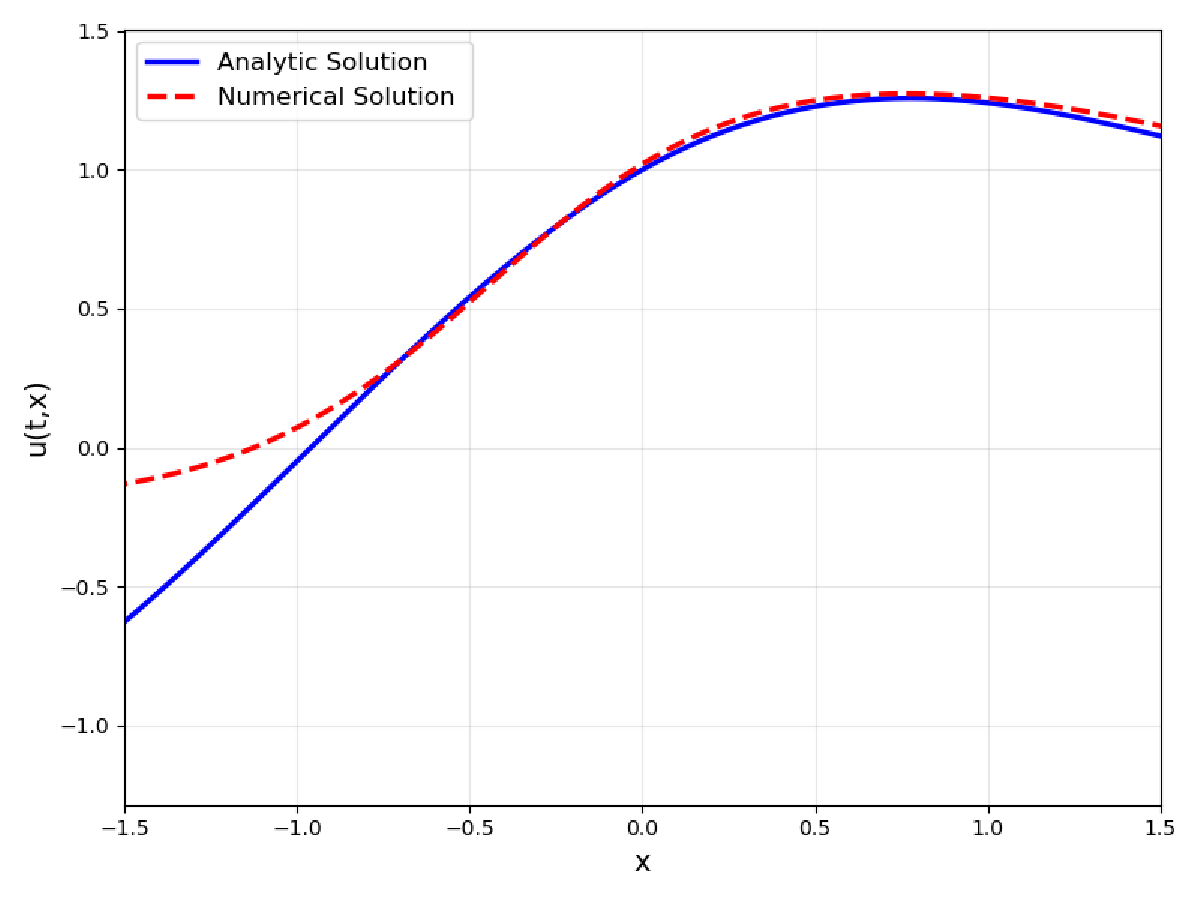}}~~~~
\subfigure[\footnotesize  $u(t, x)$ and its estimate at time $t=0.5$.]
{\includegraphics[width=6cm,height=3.1cm]{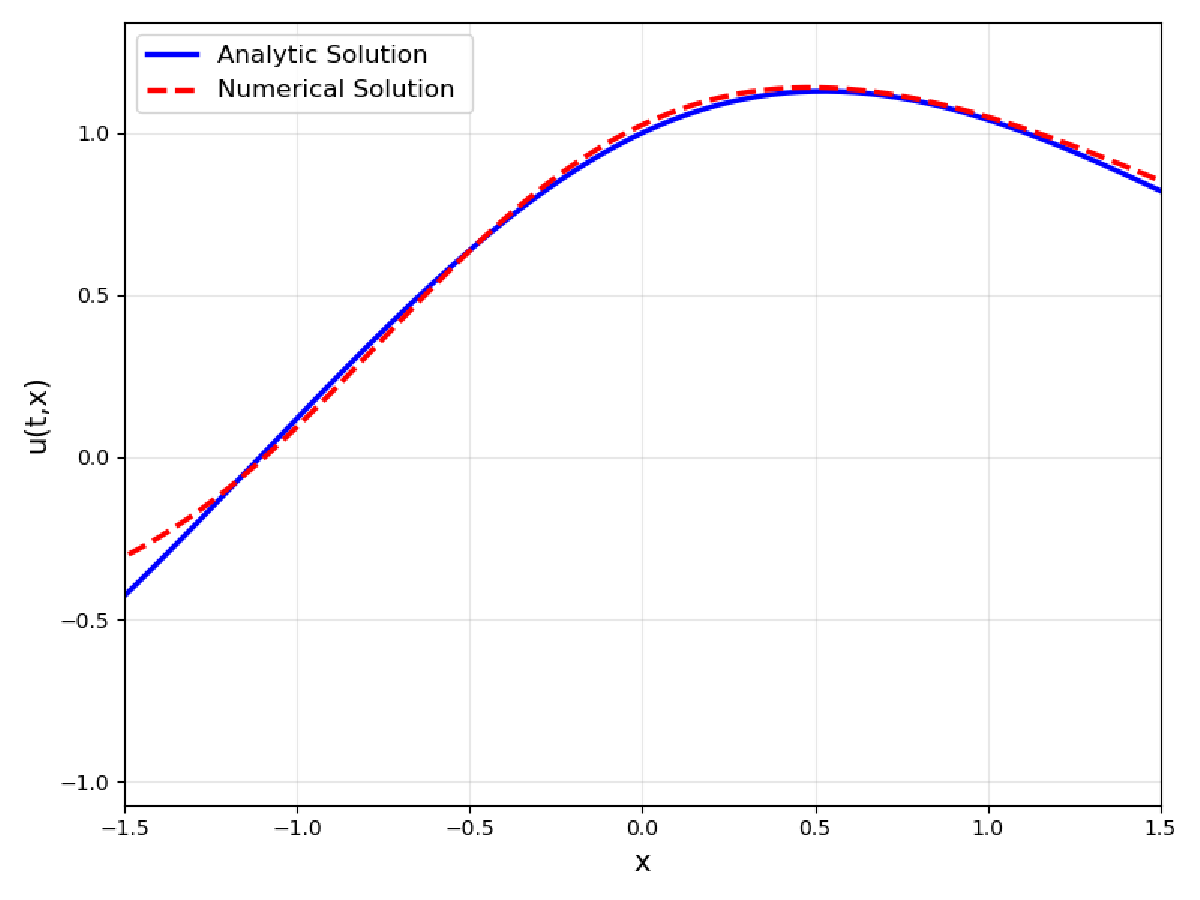}}
\\
\subfigure[\footnotesize  $u(t, x)$ and its estimate at time $t=0.7$.]
{\includegraphics[width=6cm,height=3.1cm]{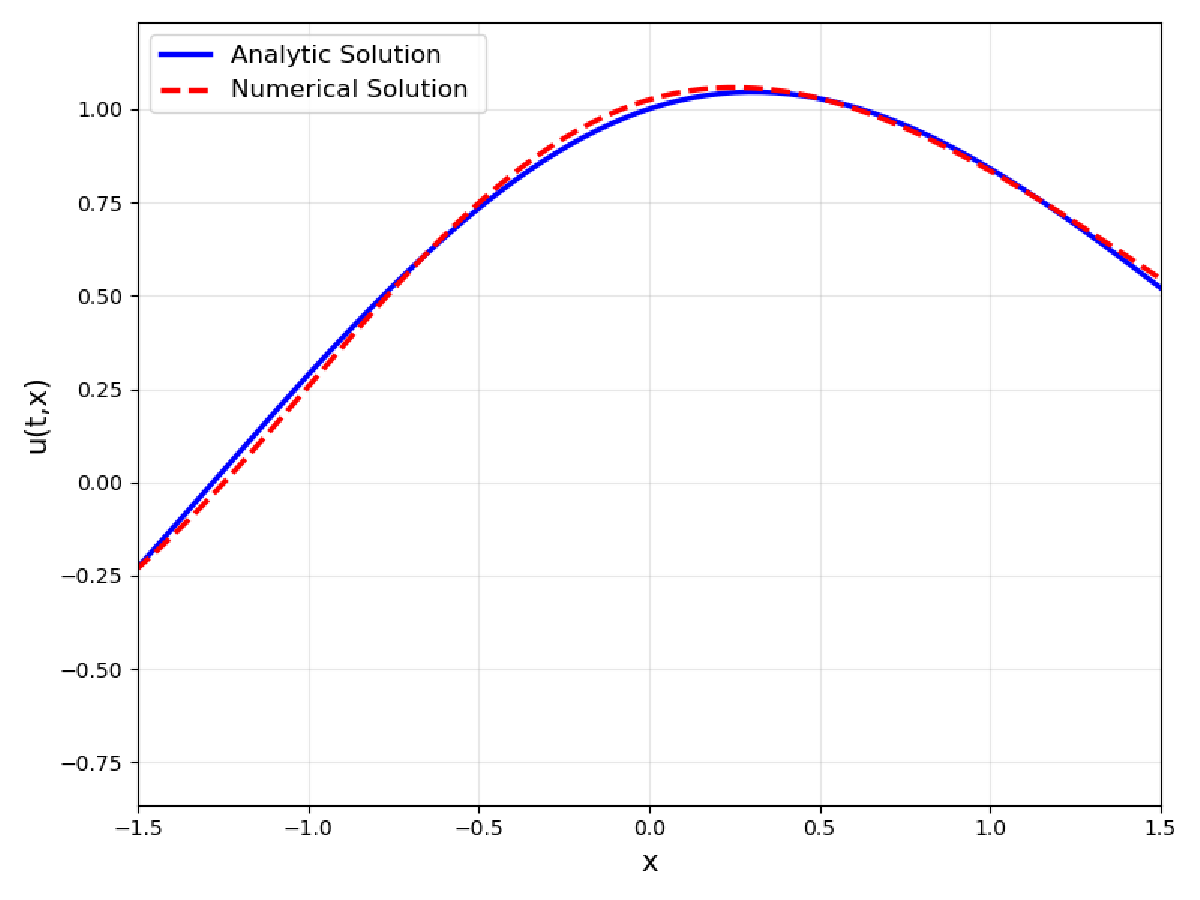}}~~~~
\subfigure[\footnotesize  $u(t, x)$ and its estimate at time $t=0.9$.]
{\includegraphics[width=6cm,height=3.1cm]{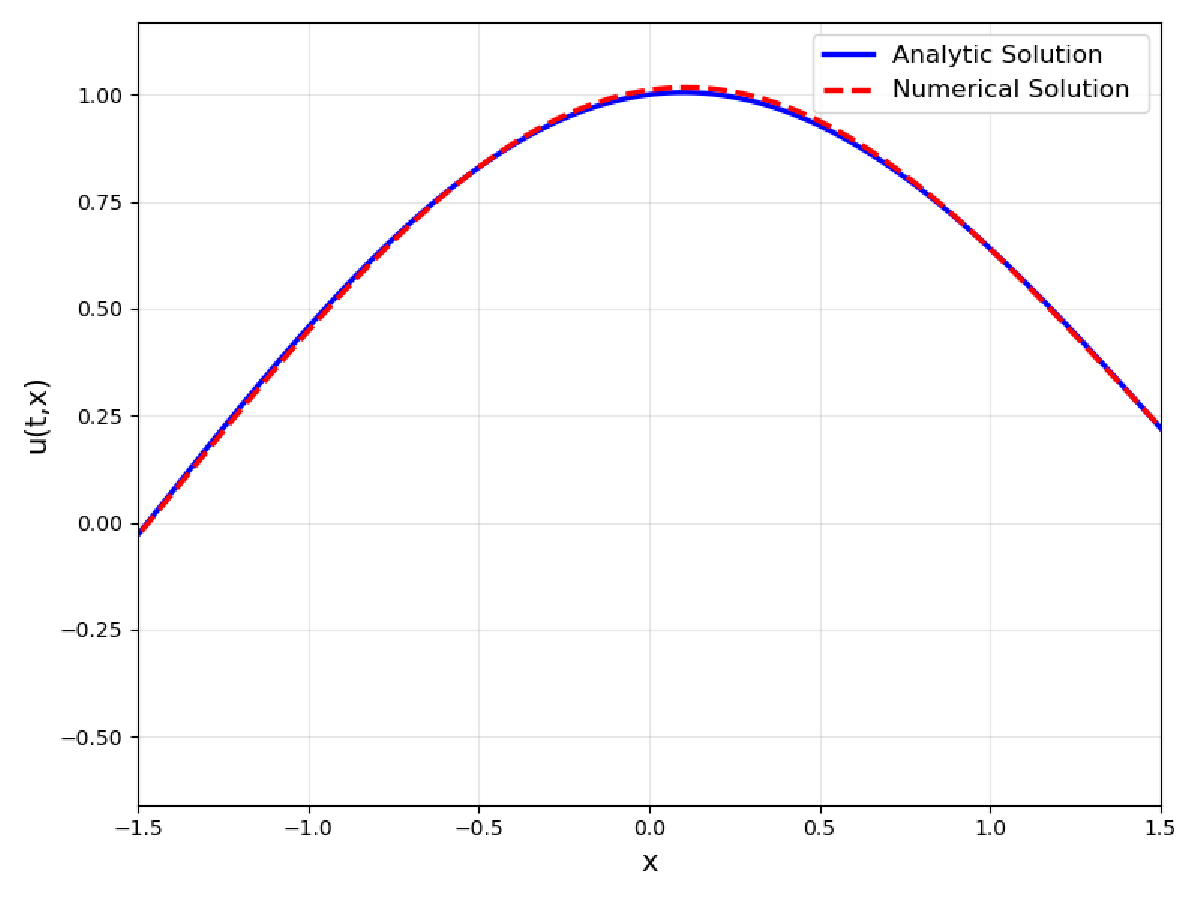}}
\caption{Estimated solution  $\overline{\mathfrak{u}}(t, x)$ obtained by DBR versus exact solution $u(t,x)$ for Example~2 with $d=1$.}\label{fex2}
\end{figure}

\begin{table}[!htb]
  \centering
  \begin{threeparttable}
    \caption{Comparison of numerical results for different schemes in Example~2 across different $N$ with $d=1$.}
    \label{tex2d1}
    \setlength{\tabcolsep}{1.3mm}{
   \begin{tabular}{lcccccc}
\toprule
Scheme
& \(N\)
& \(u(0,x_0)\)
& \(\overline{u}^{\,\pi}(0,x_0)\)
& Std. dev.
& \(\mathbb{E}[|u-u^{\pi}|]\)
& Rel. err.\\
\midrule
      \multirow{3}{*}{DBR} 
                                       & 10  & 1.377583 & 1.403209 & 0.011086 & 0.025700 & 1.8602\%  \\
                                       & 20  & 1.377583 & 1.400363 & 0.006224 & 0.022780 & 1.6536\%  \\
                                       & 30 & 1.377583 & 1.393180 & 0.006930 & 0.015598 & 1.1323\%  \\

      \midrule
      \multirow{3}{*}{
    DBDP1~\cite{HCPHWX20}} 
                                          & 10  & 1.377583 & 1,371497 & 0.023389 & 0.019112 & 0.4417\%   \\
                                         & 20  & 1.377583 & 1.366828 & 0.017810 & 0.017439 & 0.7806\%  \\
                                         & 30 & 1.377583 & 1.367642 & 0.019181 & 0.017814 & 0.7216\% \\

      \bottomrule
    \end{tabular}}
  \end{threeparttable}
\end{table}

\begin{table}[!htb]
  \centering
  \begin{threeparttable}
    \caption{Comparison of numerical results for different schemes in Example~2 across different $N$ with $d=2$.}
    \label{tex2d2}
    \setlength{\tabcolsep}{1.3mm}{
    \begin{tabular}{lcccccc}
\toprule
Scheme
& \(N\)
& \(u(0,x_0)\)
& \(\overline{u}^{\,\pi}(0,x_0)\)
& Std. dev.
& \(\mathbb{E}[|u-u^{\pi}|]\)
& Rel. err.\\
\midrule
      \multirow{3}{*}{DBR} 
                                       & 10  & 0.570737 & 0.571529 & 0.014380 & 0.012455 & 0.1387\%  \\
                                       & 20  & 0.570737 & 0.579191 & 0.016317 & 0.015263 & 1.4813\%  \\
                                       & 30 & 0.5707373 & 0.574875 & 0.016678 & 0.014892 & 0.7250\%  \\

      \midrule
      \multirow{3}{*}{
    DBDP1~\cite{HCPHWX20}} 
                                          & 10  & 0.570737 & 0.560685 & 0.009967 & 0.011556 & 1.7613\%   \\
                                         & 20  & 0.570737 & 0.563375 & 0.009665 & 0.009730 & 1.2899\%  \\
                                         & 30 & 0.570737 & 0.567637 & 0.007557 & 0.006504 & 0.5431\% \\

      \bottomrule
    \end{tabular}}
  \end{threeparttable}
\end{table}

\begin{table}[!htb]
  \centering
  \begin{threeparttable}
    \caption{Comparison of numerical results for different schemes in Example~2 across different $N$ with $d=8$.}
    \label{tex2d8}
    \setlength{\tabcolsep}{1.3mm}{
   \begin{tabular}{lcccccc}
\toprule
Scheme
& \(N\)
& \(u(0,x_0)\)
& \(\overline{u}^{\,\pi}(0,x_0)\)
& Std. dev.
& \(\mathbb{E}[|u-u^{\pi}|]\)
& Rel. err.\\
\midrule
      \multirow{3}{*}{DBR} 
                                       & 70  & 1.160317 & 1.240760 & 0.150684 & 0.131828 & 6.9328\%  \\
                                       & 90  & 1.160317 & 1.211207 & 0.116388 & 0.092109 & 4.3859\%  \\
                                       & 110 & 1.160317 & 1.232262 & 0.111441 & 0.113141 & 6.2005\%  \\

      \midrule
      \multirow{3}{*}{
    DBDP1~\cite{HCPHWX20}} 
                                          & 70  & 1.160317 & 1.126632 & 0.134559 & 0.107561 & 2.9030\%   \\
                                         & 90  & 1.160317 & 1.173978 & 0.077135 & 0.061737 & 1.1773\%  \\
                                         & 110 & 1.160317 & 1.121225 & 0.090274 & 0.075801 & 3.3690\% \\

      \bottomrule
    \end{tabular}}
  \end{threeparttable}
\end{table}

Figure \ref{fex2} provides a qualitative visualization of the approximation accuracy achieved by the DBR algorithm for the one-dimensional case of Example~2. The figure displays the estimated solution $\overline{\mathfrak{u}}(t, x)$ obtained by the DBR method alongside the exact analytical solution $u(t, x)$ at four distinct time snapshots (t=0.3, 0.5, 0.7, 0.9).
This visual evidence confirms that the DBR algorithm successfully captures the underlying dynamics of the PDE throughout the entire time horizon, demonstrating its capability to accurately propagate the solution backward in time through the learned network functions. The excellent agreement between the estimated and exact solutions serves as compelling qualitative validation of the effectiveness of the expectation-based loss functions employed in the DBR methodology.

For low-dimensional cases ($d=1, 2, 8$),  both the DBR and DBDP1 methods demonstrate satisfactory performance, achieving relative errors generally below $7\%$ across different $N$. As shown in Tables \ref{tex2d1}, \ref{tex2d2} and \ref{tex2d8}, the two algorithms exhibit comparable accuracy and stability, with no single method displaying a decisive advantage in these low-dimensional regimes. This observation aligns with the theoretical expectation that both probabilistic numerical schemes are well-suited for problems of low dimensionality.

\begin{table}[!htb]
  \centering
  \begin{threeparttable}
    \caption{Comparison of numerical results for different schemes in Example~2 across different $N$ with $d=15$.}
    \label{tex2d15}
    \setlength{\tabcolsep}{1.3mm}{
    \begin{tabular}{lcccccc}
\toprule
Scheme
& \(N\)
& \(u(0,x_0)\)
& \(\overline{u}^{\,\pi}(0,x_0)\)
& Std. dev.
& \(\mathbb{E}[|u-u^{\pi}|]\)
& Rel. err.\\
\midrule
      \multirow{3}{*}{DBR} 
                                       & 120  & -0.452413 & -0.491338 & 0.125350 & 0.112779 & 8.6040\%  \\
                                       & 130  & -0.452413 & -0.496170 & 0.194659 & 0.164798 & 9.6719\%  \\
                                       & 140 & -0.452413 & -0.458081 & 0.245985 & 0.225215 & 1.2529\%  \\

      \midrule
      \multirow{3}{*}{
    DBDP1~\cite{HCPHWX20}} 
                                          & 120  & -0.452413 & -0.730808 & 0.146088 & 0.278396 & 61.5357\%   \\
                                         & 130  & -0.452413 & -0.653904 & 0.207671 & 0.271590 & 44.5369\%  \\
                                         & 140 & -0.452413 & -0.703535 & 0.188429 & 0.271442 & 55.5073\% \\

      \bottomrule
    \end{tabular}}
  \end{threeparttable}
\end{table}

\begin{table}[!htb]
  \centering
  \begin{threeparttable}
    \caption{Comparison of numerical results for different schemes in Example~2 across different $N$ with $d=20$.}
    \label{tex2d20}
    \setlength{\tabcolsep}{1.3mm}{
   \begin{tabular}{lcccccc}
\toprule
Scheme
& \(N\)
& \(u(0,x_0)\)
& \(\overline{u}^{\,\pi}(0,x_0)\)
& Std. dev.
& \(\mathbb{E}[|u-u^{\pi}|]\)
& Rel. err.\\
\midrule
      \multirow{3}{*}{DBR}
                                       & 90  & 0.259041 & 0.249463 & 0.446021 & 0.366364 & 3.6974\%  \\
                                       & 100 & 0.259041 & 0.267776 & 0.417597& 0.305940 & 3.3720\%  \\
                                       & 140 & 0.259041 & 0.235651 & 0.264053 & 0.216113 & 9.0293\%  \\

      \midrule
      \multirow{2}{*}{
    DBDP1~\cite{HCPHWX20}}
                                         & 90  & 0.259041 & 0.076700 & 0.288634 & 0.264827 & 70.3907\%  \\
                                         & 100 & 0.259041 & 0.116744 & 0.230302 & 0.234747 & 54.9322\% \\
                                         & 140 & 0.259041 & 0.051147 & 0.286756 & 0.314811 & 80.2554\% \\

      \bottomrule
    \end{tabular}}
  \end{threeparttable}
\end{table}

However, a markedly different pattern emerges on this benchmark when the dimension
is increased to \(d=15\). As shown in Table~5.5, the DBDP1 method exhibits a
substantial deterioration in performance, with relative errors of approximately
\(61.5357\%\), \(44.5369\%\), and \(55.5073\%\) for \(N=120,130,140\), respectively.
These large errors indicate that DBDP1 is less reliable for this specific challenging
unbounded example at this dimension, rather than suggesting a universal failure of
DBDP1 for all problems with dimension larger than ten. In contrast, the DBR method
maintains relative errors below \(10\%\) across the tested temporal discretizations,
and achieves a relative error of \(1.2529\%\) at \(N=140\). This comparison highlights
the robustness of the DBR scheme for the present benchmark.

The performance gap becomes even more pronounced at \(d=20\) (Table~5.6). DBDP1
produces relative errors of \(70.3907\%\), \(54.9322\%\), and \(80.2554\%\) for
\(N=90,100,140\), respectively, whereas DBR achieves relative errors of
\(3.6974\%\), \(3.3720\%\), and \(9.0293\%\) under the same conditions. These results
show that, for this particular unbounded benchmark, the expectation-based DBR loss
leads to more stable and accurate estimates than the pathwise DBDP1 loss in the
tested high-dimensional settings.

These numerical results collectively support the core contribution of this work:
by replacing the pathwise loss function with expectation-based alternatives, the DBR
method can effectively reduce the adverse impact of pathwise noise and improve
training stability. For the challenging unbounded benchmark considered in Example~2,
the method's performance at \(d=15\) and \(d=20\), particularly when compared with
the substantial deterioration of DBDP1 on the same benchmark, provides evidence of
its practical utility for a class of high-dimensional nonlinear PDEs with complex
and unbounded structures. Due to prohibitive computational costs, the high-dimensional
regime \(d>20\) was excluded from our experiments. We anticipate that the results
would benefit from an increased sample size.

\section{Conclusion}

In this paper, we proposed a deep backward regression-based method for solving
high-dimensional quasilinear parabolic PDEs through their FBSDE representations.
The main idea is to replace pathwise Euler residuals by conditional-expectation-based
training targets. These targets are approximated through local multi-path Monte Carlo
averaging, which reduces the influence of Brownian pathwise noise and yields a
smoother regression problem for the neural networks.

Under the population-loss convention and exact minimization setting, we established
error estimates for the proposed DBR method. Under suitable approximation and
integrability assumptions, we further proved a half-order convergence result. We also
extended the method to variational inequalities and derived a corresponding error
estimate for the reflected case.

Several directions remain open. First, more efficient sampling strategies for
approximating the conditional expectations could reduce the computational cost of
the method. Second, lighter neural-network architectures may improve scalability in
very high dimensions. Finally, it would be valuable to incorporate finite-sample
generalization errors and optimization errors into the theoretical analysis, thereby
bridging the gap between the idealized population-loss theory and practical SGD
training.
\\
\\
\\
\textbf{ \Large Declarations}\\
\\
\textbf{Conflict of interest}\\
The authors declare that they have no conflict of interest.
\\
\\
\textbf{Data Availability}\\
All data used in the numerical experiments (Examples 1-2) are generated procedurally within the code using the parameters defined in the paper.
\\
\\
\textbf{ Authors' contributions }\\
The  authors contributed equally to this article. The authors read and approved the final manuscript.
\\
\\
\textbf{Funding}\\
This work was supported by the National Key R\&D Program of China (No.2023YFA1008701), the Key Project of the National Natural Science Foundation of China (No.12431017),
the National Natural Science of China (No.12501664),
the National Natural Science of Yangzhou, China (No.YZ2025145)
and the Golden Phoenix of the Green City-Yang Zhou (No.137013391).

\end{document}